\documentclass[11pt]{article}
\usepackage[margin=1in]{geometry}
\usepackage{amsmath,amssymb,amsthm,mathtools}
\usepackage{bm}
\usepackage{enumitem}
\usepackage{tikz}
\usepackage{xcolor}
\usetikzlibrary{arrows.meta,calc,decorations.markings,decorations.pathreplacing,patterns,positioning}
\definecolor{HillBlue}{RGB}{35,92,145}
\definecolor{HillTeal}{RGB}{24,137,135}
\definecolor{HillOrange}{RGB}{213,112,42}
\definecolor{HillCore}{RGB}{217,111,72}
\usepackage{hyperref}
\numberwithin{equation}{section}
\newtheorem{theorem}{Theorem}[section]
\newtheorem{proposition}[theorem]{Proposition}
\newtheorem{remark}[theorem]{Remark}

\newtheorem{lemma}[theorem]{Lemma}
\newtheorem{corollary}[theorem]{Corollary}

\newcommand{\R}{\mathbb{R}}
\newcommand{\T}{\mathbb{T}}
\newcommand{\Z}{\mathbb{Z}}
\newcommand{\N}{\mathbb{N}}
\newcommand{\E}{\mathcal{E}}
\newcommand{\diver}{\operatorname{div}}
\newcommand{\supp}{\operatorname{supp}}

\newcommand{\Id}{\mathrm{Id}}

\newcommand{\dd}{\mathrm{d}}
\title{Flexibility for the Three-Dimensional Navier--Stokes Equations via Moving Hill Vortices}
\author{
	Quoc-Hung Nguyen\thanks{State Key Laboratory of Mathematical Sciences,
	Academy of Mathematics and Systems Science, Chinese Academy of Sciences,
	Beijing 100190, China; Institute of Mathematics, Academy of Mathematics
	and Systems Science, the Chinese Academy of Sciences, Beijing 100190,
	China. Email: \texttt{qhnguyen@amss.ac.cn}.}
	\and
	Zexi Wang\thanks{Institute of Mathematics, Academy of Mathematics and Systems Science,
	Chinese Academy of Sciences, 100190 Beijing, PR China.
	Email: \texttt{wangzexi25@mails.ucas.ac.cn}.}
}
\date{}

\begin{document}
	\maketitle

	\begin{abstract}
		We construct weak solutions of the three-dimensional incompressible
		Navier--Stokes equations on the torus.  The convex-integration scheme is
		based on the moving-dipole construction of Bru\`e, Colombo, and
		Kumar~\cite{BrueColomboKumar2024}.  For the explicit exponent
		\[
		\bar p=\frac65+5\times10^{-5},
		\]
		and for any two mean-zero, divergence-free vector fields in
		\(L^2(\mathbb T^3)\), we construct a weak solution whose traces at times
		\(0\) and \(1\) approximate the prescribed fields arbitrarily well and
		which satisfies
		\[
		u\in C([0,1];L^2(\mathbb T^3)),
		\qquad
		\nabla u\in C([0,1];L^{\bar p}(\mathbb T^3)).
		\]
		Exploiting the time-locality of the iteration, we also obtain exact
		nonuniqueness for a dense set of initial data in
		\(L^2_\sigma(\mathbb T^3)\).
		The principal perturbations are localized, rescaled copies of Hill's
		spherical vortex.
		The Hill scaling preserves both the kinetic-energy scale and
		the \(L^{6/5}\)-scale of the velocity gradient.  The construction uses
		localization of the potential exterior flow, long-orbit averaging
		of moving vortex cores, an auxiliary source correction, and a temporal
		corrector compatible with uniform-in-time Sobolev control.
	\end{abstract}
	
	\section{Introduction}
	
	\subsection{The problem and the main result}
	
	We consider the incompressible Navier--Stokes equations on the
	three-dimensional torus:
	\begin{equation}\label{eq:intro-NS}
		\begin{cases}
			\partial_tu+\diver(u\otimes u)-\nu\Delta u+\nabla p=0,
			& (x,t)\in\T^3\times(0,1),\\
			\diver u=0,
		\end{cases}
	\end{equation}
	where \(u:\T^3\times[0,1]\to\R^3\) is the velocity, \(p\) is the
	pressure, and \(\nu>0\) is fixed.
	We take \(\T^3=(\R/\Z)^3\) with normalized Lebesgue
	measure \(|\T^3|=1\).  Throughout the paper, all velocity fields have
	zero spatial mean.

	We write
	\[
	L^2_\sigma(\T^3)
	:=
	\left\{
	v\in L^2(\T^3;\R^3):
	\diver v=0,\quad \int_{\T^3}v(x)\,\dd x=0
	\right\}.
	\]
	A finite-energy weak solution of \eqref{eq:intro-NS} is a vector field
	\[
	u\in C([0,1];L^2_\sigma(\T^3))
	\]
	such that
	\begin{align}\label{eq:intro-weak}
		&\int_0^1\!\!\int_{\T^3}
		\left[
		u\cdot\partial_t\varphi
		+(u\otimes u):\nabla\varphi
		+\nu u\cdot\Delta\varphi
		\right]\,\dd x\,\dd t
		+\int_{\T^3}u(x,0)\cdot\varphi(x,0)\,\dd x=0
	\end{align}
	for every
	\(\varphi\in C_c^\infty(\T^3\times[0,1);\R^3)\) satisfying
	\(\diver\varphi=0\).  The pressure can then be recovered as a
	distribution.  No energy inequality is included in this definition.
	
	Our main result is the following.
	
	\begin{theorem}[Flexibility]\label{thm:main}
		Let
		\[
		\bar p=\frac65+5\times10^{-5}.
		\]
		For every \(\varepsilon>0\) and every
		\(u^{(0)},u^{(1)}\in L^2_\sigma(\T^3)\), there exists a weak solution
		\((u,p)\) of \eqref{eq:intro-NS}, in the sense of
		\eqref{eq:intro-weak}, such that
		\[
		u\in C([0,1];L^2(\T^3)),
		\qquad
		\nabla u\in C([0,1];L^{\bar p}(\T^3)).
		\]
		Moreover,
		\[
		\|u(\cdot,0)-u^{(0)}\|_{L^2(\T^3)}<\varepsilon,
		\qquad
		\|u(\cdot,1)-u^{(1)}\|_{L^2(\T^3)}<\varepsilon.
		\]
	\end{theorem}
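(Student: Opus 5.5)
We will prove Theorem~\ref{thm:main} by a convex-integration iteration on the Navier--Stokes--Reynolds system
\[
\partial_t u_q+\diver(u_q\otimes u_q)-\nu\Delta u_q+\nabla p_q=\diver R_q,\qquad \diver u_q=0,
\]
indexed by frequency parameters \(\lambda_q\to\infty\) (superexponential, \(\lambda_{q+1}=\lambda_q^{b}\)) and amplitudes \(\delta_q\to0\). The main inductive proposition will assert the following. Suppose \((u_q,R_q)\) satisfies \(\|R_q\|_{C_tL^1}\le\delta_{q+1}\) and \(C^N_{t,x}\) bounds polynomial in \(\lambda_q\). Then there is \((u_{q+1},R_{q+1})\) with
\[
\|u_{q+1}-u_q\|_{C_tL^2}\lesssim \delta_{q+1}^{1/2},\qquad \|\nabla(u_{q+1}-u_q)\|_{C_tL^{\bar p}}\lesssim \lambda_{q+1}^{-\gamma},\qquad \|R_{q+1}\|_{C_tL^1}\le\delta_{q+2}.
\]
In addition, \(u_{q+1}=u_q\) near \(t=0\) and \(t=1\) whenever \(R_q\) vanishes there. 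We start from \(u_0\), a smooth interpolation of mollifications of \(u^{(0)}\) and \(u^{(1)}\). Its error \(R_0\) is supported away from the endpoints after a time cutoff, and \(\|R_0\|\) can be made to start the induction by adjusting parameters. The sums then converge in \(C_tL^2\) and in \(C_t\dot W^{1,\bar p}\), and the limit is a weak solution. Its endpoint traces are within \(\varepsilon\) of \(u^{(0)},u^{(1)}\) because of the mollification error and time-locality.

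The perturbation \(w_{q+1}=w^{(p)}+w^{(c)}+w^{(t)}\) follows the moving-dipole scheme of Bru\`e--Colombo--Kumar, with Hill's spherical vortex as the building block. We decompose \(R_q\) (after mollification) into one-dimensional pieces \(\sum_k a_k^2(x,t)\,e_k\otimes e_k\) via a geometric lemma. For each direction \(e_k\) we place many disjoint rescaled Hill vortices of radius \(r=\lambda_{q+1}^{-1}\). These translate with speed \(\sim\) amplitude\(/r^{\alpha}\) along long periodic orbits in direction \(e_k\). The key scaling is that the Hill vortex \(\mathcal H_r(x)=\mathcal H(x/r)\) with amplitude chosen to fix the energy also fixes \(\|\nabla\mathcal H_r\|_{L^{6/5}}\). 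Hence the exponent \(\bar p\) slightly above \(6/5\) costs only a small power \(\lambda_{q+1}^{0+}\), and this loss is compensated by the \(\lambda^{-\gamma}\) gains elsewhere. Because Hill's vortex is a steady Euler solution in its comoving frame, the self-interaction \(\diver(w^{(p)}\otimes w^{(p)})+\partial_t w^{(p)}\) reduces to a pressure gradient plus the time-averaged flux. The flux \(\int \mathcal H\otimes\mathcal H\) is not rank one, and the exterior potential flow decays only like \(|x|^{-3}\). We therefore (i) truncate the exterior dipole field with a cutoff at scale \(\mu r\), correcting the divergence and absorbing the truncation error, and (ii) average along the long orbit. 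This converts the transport term \(\partial_t\) of the moving core into \(\diver(a_k^2 e_k\otimes e_k)\) up to an error estimated by the orbit period. The mismatch with the slow coefficients \(a_k\) produces a source term that is not in divergence form with small antidivergence. This is handled by the auxiliary source correction \(w^{(c)}\). The temporal corrector \(w^{(t)}\), of the form \(\mathbb P\,\partial_t^{-1}\) of the high-frequency oscillation, is built with time-uniform Sobolev bounds so that the \(C_t\) norms are kept.

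The new stress is split into linear, oscillation, corrector, dissipative, and truncation parts. Each part is inverted with the antidivergence \(\mathcal R\) and estimated in \(L^1\) (in fact \(L^{1+}\)). The dissipative term \(\nu\Delta w^{(p)}\) gives \(\|\mathcal R\Delta w\|_{L^1}\lesssim\|\nabla w\|_{L^1}\). This is small since \(\|\nabla\mathcal H_r\|_{L^1}\sim r^{1/2}\|\nabla\mathcal H_r\|_{L^{6/5}}\)-type gains come from the concentration. This is exactly the place where the Hill scaling is used.

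The main obstacle is the oscillation error. Hill's vortex has a discontinuous vorticity at the sphere boundary and only a \(|x|^{-3}\) exterior tail. The averaging must therefore be done with limited regularity, and the interaction between the localized exterior potential flows of distinct vortices and the slow coefficients must be shown to be small in \(L^1\) uniformly in time. I would first try to fix the parameter hierarchy \(r\ll\mu r\ll\) orbit length \(\ll\lambda_q^{-1}\) so that all error powers close with a margin \(5\times10^{-5}\), bookkeeping the exponents explicitly. Exact nonuniqueness is not needed for this theorem.
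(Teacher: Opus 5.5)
You have the right ingredients (moving Hill cores as in Bru\`e--Colombo--Kumar, truncation of the exterior doublet, orbit averaging, a cellwise temporal corrector), but the building block you describe cannot close.

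\textbf{Many cores of radius \(\lambda_{q+1}^{-1}\).} You place \emph{many} disjoint Hill cores of radius \(\lambda_{q+1}^{-1}\) along each orbit. The invariance of the \(L^{6/5}\)-gradient at fixed energy protects only a single core. If a stress of size \(\delta_{q+1}\) is shared by \(N\) simultaneous disjoint cores, each has amplitude \(\sim\delta_{q+1}^{1/2}N^{-1/2}\). The sum then has \(\|\nabla w\|_{L^p}\sim\delta_{q+1}^{1/2}N^{1/p-1/2}\) times the single-core scale factor, which grows with \(N\) for every \(p<2\). A time-uniform \(L^{\bar p}\) bound therefore needs essentially one active core at each time; this is why the paper activates the nine directions in disjoint subintervals, one core each.

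The radius is also wrong. A core of energy \(\sim\delta_{q+1}\) and physical radius \(\rho\) moves with Hill speed \(\sim\delta_{q+1}^{1/2}\rho^{-3/2}\). The orbit must be \(\lambda_{q+1}^{-1}\)-dense, so its length is \(\sim\lambda_{q+1}^{2}\). Completing \(\gtrsim\lambda_{q+1}\) periods in a cell of length \(\tau_{q+1}\) then forces \(\lambda_{q+1}^{3}\rho^{3/2}\delta_{q+1}^{-1/2}\lesssim\tau_{q+1}\), which is \eqref{traj: ineq for cutoff}. For \(\rho=\lambda_{q+1}^{-1}\) a single orbit takes time \(\sim\lambda_{q+1}^{1/2}\delta_{q+1}^{-1/2}\gg1\), and adding cores to compensate brings back the \(N^{1/p-1/2}\) loss. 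The paper's cores have physical radius \((r_{q+1}a)^{2/3}\approx\lambda_{q+1}^{-4}\), far below the orbit spacing. The \(\bar p\)-loss is then governed by the smallest Hill parameter \(r_{q+1}\delta_{q+1}\), which yields the condition \((\frac53-\frac2{\bar p})(\mu+\gamma)<\frac\gamma2\) and hence the value of \(\bar p\).

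\textbf{Cancellation mechanism.} You attribute the low-frequency stress to a residual quadratic flux \(\int\mathcal H\otimes\mathcal H\) and worry that it is not rank one. In fact the quadratic term is removed entirely by the traveling-wave identity once \(X'=\eta R^{-1}\xi\). The rank-one stress comes from the \emph{linear} modulation source \(\frac{\dd}{\dd t}(\eta R)\,R^{-1}\widetilde H_{R,-\xi}(x-X(t))\), the time derivative of the block's impulse \(2\pi\eta R\xi\). This source is replaced by \(\frac{\dd}{\dd t}(\eta R)\widetilde U(x-X)\xi\). The difference has zero mean and \(\lambda_{q+1}^{-1}\)-support, so it is a stress term with a \(\lambda_{q+1}^{-1}\) gain, not a velocity correction. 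Integration by parts in time then gives \(\diver\bigl(\xi\otimes\xi\int\eta^2\widetilde U(x-X(t))\,\dd t\bigr)\).

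The coefficient is recovered only because the Hill scale is tied to it, \(R=r_{q+1}a_i^k(X)\). The speed \(\eta/R\) is then proportional to \(1/a\), so the residence time reproduces \(a(x)\). You never say how \(a_k\) enters. With a fixed radius the Hill speed is tied to the amplitude, so the only remaining knob is the amplitude, and that fails. Only \(\|a_i\|_{L^1}\lesssim\delta_{q+1}\) is controlled, \(\sup a_i\) can be larger by a power of \(\lambda_{q+1}\), and the block's energy would follow it. Under the Hill scaling, by contrast, \(\|\eta\widetilde H_R\|_{L^2}\sim\eta\sim\delta_{q+1}^{1/2}\) independently of \(R\).

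\textbf{Endpoints.} Your endpoint argument is also wrong as stated. For \(u_0=\chi\,u^{(0)}\ast\rho_\ell+(1-\chi)\,u^{(1)}\ast\rho_\ell\), the stress near \(t=0\) is \(\mathcal R(\diver(u_0\otimes u_0)-\nu\Delta u_0)\), which is nonzero in general. Space--time mollification also destroys exact agreement with \(u_q\) in any case. The paper instead makes all perturbations vanish at the cell interfaces \(t=0,1\) and sums the mollification errors \(\|u_l-u_q\|_{C_tL^2}\lesssim\lambda_q^{-1/5}\).
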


	The numerical value of \(\bar p\) is not claimed to be optimal.  It is
	an explicit exponent for which all inequalities in the parameter
	selection are strict.  The threshold \(6/5\) comes from
	the Hill scaling used in the present construction.  Also,
	Theorem~\ref{thm:main} is not a statement in the Leray--Hopf class:
	the regularity \(\nabla u\in C_tL_x^{\bar p}\), with
	\(\bar p\) close to \(6/5\), does not imply
	\(u\in L_t^2H_x^1\), and the solutions constructed here are not asserted
	to satisfy the energy inequality.
	
	\subsection{Interpretation and consequences of the theorem}

	Theorem~\ref{thm:main} can be restated as a density result for pairs of
	endpoint traces.
	Let \(\mathcal S_{\bar p}\) denote the set of all weak
		solutions of \eqref{eq:intro-NS} satisfying
		\(u\in C([0,1];L^2_\sigma(\T^3))\) and
		\(\nabla u\in C([0,1];L^{\bar p}(\T^3))\).
	
	Then
	\begin{equation}\label{eq:intro-trace-density}
		\overline{
			\left\{
			\bigl(u(\cdot,0),u(\cdot,1)\bigr):
			u\in\mathcal S_{\bar p}
			\right\}
		}^{\,L^2\times L^2}
		=
		L^2_\sigma(\T^3)\times L^2_\sigma(\T^3).
	\end{equation}
	Thus weak trajectories in this class connect arbitrarily small
	neighborhoods of any two prescribed states.  No compatibility condition
	is imposed on the endpoint targets.
	
	This is an approximate, rather than exact, controllability statement.
	Theorem~\ref{thm:main} does not assert that both prescribed traces are
	attained exactly.  The density relation~\eqref{eq:intro-trace-density}
	states only that the set of endpoint pairs of weak solutions is dense
	in the product energy topology.  Nevertheless, the construction contains
	additional information that is not visible in this density statement:
	the iteration is local in time.
	Starting from two Reynolds flows that agree near \(t=0\), the successive
	perturbations can be chosen so that their limits continue to agree on a
	nontrivial initial time interval.  Choosing two distinct terminal targets
	then gives the following exact same-data consequence.

	\begin{theorem}[Nonuniqueness]\label{thm:nonuniqueness}
		There exists a dense set
		\(\mathcal D\subset L^2_\sigma(\T^3)\) such that every
		\(u_{\rm in}\in\mathcal D\) is the initial datum of at least two
		distinct weak solutions \(u^{(1)},u^{(2)}\in\mathcal S_{\bar p}\).
		More precisely, the two solutions may be chosen so that
		\[
			u^{(1)}(\cdot,t)=u^{(2)}(\cdot,t)
			\quad\text{for every }t\in[0,t_*]
		\]
		for some \(t_*>0\), while
		\(u^{(1)}(\cdot,1)\ne u^{(2)}(\cdot,1)\).
	\end{theorem}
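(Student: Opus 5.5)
The plan is to deduce Theorem~\ref{thm:nonuniqueness} from the time-local form of the iteration behind Theorem~\ref{thm:main}. We do not use the density statement~\eqref{eq:intro-trace-density} itself. Concretely, we will use the iterative proposition in the following form. If a Navier--Stokes--Reynolds solution \((v_q,R_q)\) has \(R_q\equiv0\) on \([0,\tau]\), then the perturbation \(w_{q+1}\) and the new stress can be chosen supported in \(t\ge\tau-\delta_q\). Here the \(\delta_q\) are summable and \(\sum_q\delta_q<\tau/2\).

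This should follow from how the perturbation is built. The moving Hill vortices, the potential-flow localization, the source correction and the temporal corrector are all built from \(R_q\) through cutoffs in time on intervals of length \(\ll\delta_q\). The Leray projection and the inverse divergence act only in space. So the time support of \(w_{q+1}\) is contained in a \(\delta_q\)-neighborhood of \(\supp_t R_q\). Checking this support property is the main obstacle. One must verify that no step of the scheme is nonlocal in time: the temporal corrector, the long-orbit averaging, and the gluing or mollification in time must all respect it. If the temporal corrector is defined by a time antiderivative, I would anchor it at the left endpoint of each cutoff interval so it vanishes before the support.

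Given this, define \(\mathcal D\) as the set of \(u_{\rm in}\) that are smooth, mean-zero and divergence-free. This set is dense in \(L^2_\sigma(\T^3)\). Fix \(u_{\rm in}\in\mathcal D\), and let \(U\) be the local smooth Navier--Stokes solution with \(U(0)=u_{\rm in}\), which exists on some interval \([0,2\tau]\). Pick two distinct targets \(a^{(1)}\ne a^{(2)}\) in \(L^2_\sigma\) with \(\|a^{(1)}-a^{(2)}\|_{L^2}=1\).

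For \(i=1,2\), build the starting flow
\[
v_0^{(i)}=\chi(t)U+(1-\chi(t))\,\bigl(\text{heat flow/interpolant toward }a^{(i)}\bigr),
\]
where \(\chi\equiv1\) on \([0,\tau]\) and \(\chi\equiv0\) near \(1\). The stress \(R_0^{(i)}\) is obtained by applying the inverse divergence to the Navier--Stokes error. It vanishes on \([0,\tau]\), so \(v_0^{(1)}=v_0^{(2)}\) there. The same stage-0 setup as in the proof of Theorem~\ref{thm:main} gives the energy and size bounds required by the iteration.

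Run the iteration for both \(i\), using the same parameters and the same choices wherever the data coincide. By induction, \(v_q^{(1)}=v_q^{(2)}\) and \(R_q^{(i)}=0\) on \([0,\tau-\sum_{j<q}\delta_j]\): the perturbations vanish there by the support property, and the stresses stay zero there. The limits \(u^{(i)}\) belong to \(\mathcal S_{\bar p}\) by the convergence argument of Theorem~\ref{thm:main}. They agree on \([0,t_*]\) with \(t_*=\tau/2\), so both have initial datum \(u_{\rm in}\). Finally,
\[
\|u^{(i)}(1)-a^{(i)}\|_{L^2}\le\sum_q\|w_{q+1}\|_{L^\infty_tL^2}<\tfrac14,
\]
which gives \(\|u^{(1)}(1)-u^{(2)}(1)\|_{L^2}\ge\tfrac12\), so \(u^{(1)}(\cdot,1)\ne u^{(2)}(\cdot,1)\).
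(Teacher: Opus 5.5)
There is a genuine gap. Your induction rests on a support property that is false for this paper's scheme, so it is not something that remains to be checked.

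Two ingredients perturb the flow and create stress where $\E_q$ vanishes.
First, each stage begins with the space--time mollification $u_l=\rho_l\ast u_q$, which changes $u_q$ on all of $I_\ast$. In particular $u_{q+1}(\cdot,0)=u_l(\cdot,0)\neq u_q(\cdot,0)$ in general. The commutator $u_l\otimes u_l-(u_q\otimes u_q)\ast\rho_l$ has trace equal to minus the local variance of $u_q$ on the mollification ball, so it is nonzero on $[0,\tau]$ even though $\E_q=0$ there.
Second, the geometric lemma uses $\zeta=32(|\E_l|^2+\delta_{q+1}^2)^{1/2}$; here $\delta_{q+1}$ is the paper's stress level, not your locality width. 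Hence $a_i\ge\delta_{q+1}$ everywhere and $(\eta_i^k)^2\ge\delta_{q+1}$. Since the cutoffs $\zeta_i^k$ tile every cell of $I_\ast$, a Hill block is active in every subinterval $\mathcal T_i^k$, including those inside $[0,\tau]$. It brings its errors $F_i^k$, $G_i^k$, freezing and source terms with it.

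This lower bound is structural. The scale $\mathfrak R_i^k=r_{q+1}a_i^k$, the speed $\eta_i^k/\mathfrak R_i^k$, the bound on $\nabla\log\mathfrak R_i^k$, and the $L^{\bar p}$ gradient estimate for $\bar p>6/5$ (which uses $R_i^k\ge r_{q+1}\delta_{q+1}$) all degenerate as $a_i^k\to0$. So the blocks cannot simply be switched off where the stress is small.

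Consequently $R_q^{(i)}=0$ on $[0,\tau-\sum_{j<q}\delta_j]$ is not propagated. The limits do not in general have initial datum $u_{\rm in}$, and $\mathcal D$ cannot be taken to be all smooth fields.

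What the scheme does provide is time locality, not zero preservation. If two input triples agree on $(-\infty,T]$, the outputs can be coupled to agree on $(-\infty,T-l_q-\tau_{q+1}]$. This still makes your two limits coincide on some $[0,t_*]$, so they share a common trace $\widetilde v$. But $\widetilde v$ is a perturbation of the datum you started from.

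The paper therefore starts from an arbitrary $v\in L^2_\sigma(\T^3)$. It uses the trace estimate $\|u_{q+1}(\cdot,0)-u_q(\cdot,0)\|_{L^2}\le C\lambda_q^{-1/5}$: blocks and corrector vanish at $t=0$, so only mollification errors accumulate. This gives $\|\widetilde v-v\|_{L^2}<\varepsilon$. It then defines $\mathcal D$ as the set of such common traces, which is dense but not identified with an explicit class.

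Your distinctness argument at $t=1$ has a separate flaw. The bound $\|u^{(i)}(\cdot,1)-a^{(i)}\|_{L^2}\le\sum_q\|w_{q+1}\|_{L^\infty_tL^2}<\frac14$ is not available. Because $a_i\ge\delta_{q+1}$ and $(\eta_i^k)^2=\frac{9}{2\pi}\int a_i^k\,\dd x$, every block has $L^2$ norm of order at least $\delta_{q+1}^{1/2}$ on its plateau. Moreover $\delta_1$ must dominate $\|R_0\|_{L^1}$, which is of order one for generic targets (in the paper $\delta_1=\lambda_1^{\gamma}>1$).

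Smallness at $t=1$ must instead come from the fact that $t=1$ is a cell interface, where the blocks and the temporal corrector vanish. Then $u_{q+1}(\cdot,1)-u_q(\cdot,1)=u_l(\cdot,1)-u_q(\cdot,1)$ is only a mollification error of size $C\lambda_q^{-1/5}$. These errors are summable and can be made smaller than any prescribed threshold by enlarging $\lambda_0$, which yields $\|u^{(1)}(\cdot,1)-u^{(2)}(\cdot,1)\|_{L^2}>0$.
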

Thus the same scheme yields exact nonuniqueness on a dense subset of the
	finite-energy data. The theorem does not claim nonuniqueness for every
	prescribed element of \(L^2_\sigma(\T^3)\), and the resulting solutions
	are not asserted to satisfy the energy inequality.
	
	The flexibility is incompatible with the Leray--Hopf energy inequality
	for suitable target pairs.  Indeed, take \(u^{(0)}=0\), \(u^{(1)}\neq0\), and
	\(0<\varepsilon<\|u^{(1)}\|_{L^2}/3\).  The solution supplied by the theorem
	satisfies
	\[
	\|u(\cdot,0)\|_{L^2}<\varepsilon,
	\qquad
	\|u(\cdot,1)\|_{L^2}>
	\|u^{(1)}\|_{L^2}-\varepsilon>2\varepsilon,
	\]
	and therefore cannot obey a nonincreasing kinetic-energy inequality.
	Hence the solution does not belong to the Leray--Hopf class.
	
	The theorem gives the following small improvement over \(L_x^2\).  Since
	\[
	\bar p=\frac{24001}{20000},
	\qquad
	\bar p^\ast:=\frac{3\bar p}{3-\bar p}
	=\frac{72003}{35999}
	=2+\frac{5}{35999},
	\]
	Poincar\'e's inequality and Sobolev embedding give
	\begin{equation*}
		u\in C([0,1];W^{1,\bar p}(\T^3))
		\hookrightarrow
		C([0,1];L^{\bar p^\ast}(\T^3)),
		\qquad \bar p^\ast>2.
	\end{equation*}
	This is a fixed-time spatial gain, rather than an estimate obtained by
	integrating a stronger norm in time.  The class nevertheless remains
	far below the classical uniqueness range.
	
	\subsection{Background and related work}
	
	Leray's classical construction~\cite{Leray1934} on \(\R^3\), and
	Hopf's construction~\cite{Hopf1951} in the periodic and bounded-domain
	settings, give global weak solutions in
	\[
	 u\in L_t^\infty L_x^2\cap L_t^2\dot H_x^1
	\]
	satisfying the energy inequality.
	Uniqueness holds under the
	Ladyzhenskaya--Prodi--Serrin conditions
	\[
	 u\in L_t^sL_x^q,
	 \qquad \frac2s+\frac3q\leq1,\qquad q>3,
	\]
	with the usual endpoint refinements; see
	\cite{Prodi1959,Serrin1962,Ladyzhenskaya1967}, and
	\cite{EscauriazaSereginSverak2003} for the endpoint \(q=3\).
	Quantitative refinements are discussed in~\cite{Tao2019}.
	The global regularity
	problem for smooth finite-energy data remains open~\cite{Fefferman2000}.
	For an introduction to the mathematical analysis of the Euler and
	Navier--Stokes equations, including the classical well-posedness theory
	and the Leray--Hopf framework, we refer to the book of Bedrossian and
	Vicol~\cite{BedrossianVicol2022}.  Further treatments of weak, mild, and
	local-energy solutions can be found in
	\cite{LemarieRieusset2002,LemarieRieusset2007,Tsai2018}.
	Our result concerns flexibility in a distributional class with finite
	kinetic energy and a uniform spatial-gradient bound below the
	energy-dissipation level; the energy inequality is not imposed.

	For comparison, we recall the Cauchy theory in critical spaces.
	Kato~\cite{Kato1984} proved local well-posedness in \(L^3(\R^3)\) and
	global existence for small data, while Koch and
	Tataru~\cite{KochTataru2001} extended the small-data theory to
	\(BMO^{-1}\).  Bourgain and Pavlovi\'c~\cite{BourgainPavlovic2008}
	proved norm inflation in \(\dot B^{-1,\infty}_\infty\), and
	Germain~\cite{Germain2008} studied the associated second-iterate
	instability.  Jia and \v{S}ver\'ak constructed forward self-similar
	solutions from large \((-1)\)-homogeneous data and formulated spectral
	conditions for same-data nonuniqueness~\cite{JiaSverak2014,JiaSverak2015};
	Guillod and \v{S}ver\'ak~\cite{GuillodSverak2023} investigated this
	bifurcation scenario numerically.

	Convex integration originates in the geometric flexibility of
	Nash and Kuiper~\cite{Nash1954,Kuiper1955} and
	Gromov~\cite{Gromov1973,Gromov1986}.
	For incompressible Euler, the modern theory developed from the early
	constructions of Scheffer and
	Shnirelman~\cite{Scheffer1993,Shnirelman1997,Shnirelman2000} and the
	differential-inclusion framework of De Lellis and
	Sz\'ekelyhidi~\cite{DeLellisSzekelyhidi2009,DeLellisSzekelyhidi2013}; see
	also~\cite{DaneriSzekelyhidi2017}.  After a series of partial results,
	including~\cite{Buckmaster2015,
	BuckmasterDeLellisIsettSzekelyhidi2015}, Isett~\cite{Isett2018} proved the
	flexible part of Onsager's conjecture~\cite{Onsager1949} by constructing
	non-energy-conserving Euler solutions in \(C_tC_x^\alpha\) for every
	\(\alpha<1/3\).  Together with the energy-conservation result of
	Constantin, E, and Titi~\cite{ConstantinETiti1994} above the exponent
	\(1/3\), this resolved Onsager's conjecture.  Buckmaster, De Lellis,
	Sz\'ekelyhidi, and Vicol~\cite{BuckmasterDeLellisSzekelyhidiVicol2019}
	subsequently gave a considerably shorter proof of the flexible part.
	They also constructed solutions with any prescribed smooth positive
	energy profile and established an \(h\)-principle in \(C^\beta_{t,x}\)
	for every \(\beta<1/3\).
	Intermittent and
	localized variants were developed in
	\cite{BuckmasterMasmoudiNovackVicol2023,NovackVicol2023,
	GiriKwonNovack2024,GiriKwonNovackStrong}; related developments include the
	two-dimensional Newton--Nash scheme~\cite{GiriRadu2024} and the
	construction with prescribed generalized helicity~\cite{GiriKwonNovack2026}.
	For surveys, see~\cite{TaoNotes2019,BuckmasterVicolHydrodynamics}.
	The relation between convex integration, intermittency, and
	phenomenological models of turbulence is discussed by Buckmaster and
	Vicol in~\cite{BuckmasterVicolTurbulence2019}.
	At the level of the Euler--Reynolds system, these schemes successively
	replace a coarse stress by the quadratic self-interaction of a rapidly
	oscillating perturbation.  Intermittency adds spatial concentration to
	this mechanism and gives estimates in Lebesgue spaces below
	\(L^2\), which are used to control the viscous error.

	For the viscous problem, spatial concentration makes high-frequency
	errors small in spaces below \(L^2\).
	Buckmaster and Vicol~\cite{BuckmasterVicol2019} constructed finite-energy
	weak solutions on \(\T^3\) with arbitrary prescribed smooth nonnegative
	energy profiles.  For some \(\beta>0\), these solutions satisfy
	\[
	 v\in C\bigl([0,T];H^\beta(\T^3)\cap
	 W^{1,1+\beta}(\T^3)\bigr).
	\]
	In particular, their construction implies nonuniqueness in the class of
	finite-energy weak solutions.  Their building blocks are intermittent
	Beltrami waves: stationary
	high-frequency flows modulated by concentrated Dirichlet kernels.  The
	resulting sub-\(L^2\) estimates make both the Reynolds and viscous errors
	perturbative, while introducing several separated spatial and
	temporal scales.

	Buckmaster, Colombo, and Vicol~\cite{BuckmasterColomboVicol2022}
	combined convex integration with gluing.  Starting from two strong
	Navier--Stokes solutions, they produced a weak solution agreeing with the
	first near the initial time and with the second near the terminal time,
	with the same type of uniform-in-time regularity and smoothness outside a
	fractal set of singular times.  Their result also concerns
	time-uniform regularity.  In our setting, however, the endpoint states are arbitrary finite-energy
	targets and are approximated rather than joined through prescribed strong
	trajectories.
	For comparison, Leray--Hopf solutions have a singular time set of
	Hausdorff dimension at most \(1/2\)~\cite{Leray1934,Scheffer1977}, while
	the space--time partial-regularity theorem of Caffarelli, Kohn, and
	Nirenberg~\cite{CaffarelliKohnNirenberg1982} controls the parabolic
	Hausdorff measure of the singular set for suitable weak solutions.  The
	solutions in~\cite{BuckmasterColomboVicol2022} are not suitable weak
	solutions, so these partial-regularity results do not apply directly.
	Buckmaster, Colombo, and Vicol instead proved that the singular set of
	times has Hausdorff dimension strictly less than (1).

	Cheskidov and Luo~\cite{CheskidovLuo2022} proved nonuniqueness in
	\(L_t^sL_x^\infty\) for every \(1\le s<2\).  More precisely, for every
	\(1\le s<2\) and \(1\le q<\infty\), they constructed non-Leray--Hopf weak
	solutions in
	\[
	 L_t^sL_x^\infty\cap L_t^1W_x^{1,q}.
	\]
	Their construction combines gluing, temporal concentration, spatially
	intermittent Mikado flows, and a temporally oscillating corrector.  The
	required smallness is obtained in time-integrated norms.
	Theorem~\ref{thm:main}, by contrast, requires estimates that are uniform
	at each time, so the principal perturbations must also provide sufficient
	spatial concentration.

	In two dimensions, the same authors proved an \(L^2\)-critical
	nonuniqueness result~\cite{CheskidovLuo2023}.  Their solutions are
	uniformly continuous in \(L^p\) for every \(p<2\).  This conclusion is
	uniform in time, as is ours, but their construction uses space--time
	intermittent accelerating jets, whereas the present construction uses
	moving Hill vortices.

	Related results concern Navier--Stokes equations with fractional
	dissipation.  Luo and Titi~\cite{LuoTiti2020} proved nonuniqueness for
	the hyperviscous equation with dissipation \((-\Delta)^\alpha\) whenever
	\(\alpha<5/4\), showing the sharpness of the J.-L.\ Lions exponent.
	Nonuniqueness results in the hypodissipative regime were obtained
	in~\cite{ColomboDeLellisDeRosa2018,DeRosa2019}.  In these works the
	dissipation exponent is varied.  Here the standard Laplacian is fixed,
	and the aim is to obtain time-uniform Sobolev regularity while retaining
	enough spatial concentration to control the viscous error.

	An inviscid result with a related endpoint statement is due to Novack
	and Vicol~\cite{NovackVicol2023}.  For every \(0<\beta<1/2\), their Euler
	solutions connect arbitrary divergence-free \(L^2\) states up to a small
	endpoint error while remaining in
	\[
	 C_t\left(H_x^\beta\cap
	 L_x^{\frac{2-2\beta}{1-2\beta}}\right).
	\]
	Their construction uses a scheme with higher-order Reynolds stresses,
	which are corrected through a combinatorial placement of intermittent
	pipe flows with optimal relative intermittency.  Although the endpoint
	statement is close to Theorem~\ref{thm:main}, their equation is inviscid.
	In the present
	setting, the Laplacian of a concentrated perturbation creates an
	additional principal constraint and requires a different choice of scales.

	A distinct route to nonuniqueness is based on spectral instability.
	Albritton, Bru\`e, and Colombo constructed two suitable Leray--Hopf
	solutions on \(\R^3\) with the same force and zero initial
	data~\cite{AlbrittonBrueColombo2022}.  Their argument starts from an unstable self-similar
	vortex-ring profile and constructs a second trajectory on its unstable
	manifold.  Thus it gives exact nonuniqueness for a forced Cauchy problem,
	rather than endpoint flexibility for the unforced equation.  An
	inner--outer gluing argument later extended the
	construction to the torus and smooth bounded domains
	\cite{AlbrittonBrueColombo2023}.  This instability mechanism is based on
	Vishik's construction for the forced two-dimensional Euler
	equations~\cite{Vishik2018I,Vishik2018II}; see
	also~\cite{AlbrittonEtAl2024} for a detailed presentation.  In a
	different direction, Bru\`e and De
	Lellis~\cite{BrueDeLellis2023} constructed forced classical
	Navier--Stokes solutions exhibiting anomalous dissipation in the
	vanishing-viscosity limit.
	For a different mechanism producing anomalous diffusion through
	fractal homogenization, see Armstrong and Vicol~\cite{ArmstrongVicol2025}.

	At critical regularity, Coiculescu and
	Palasek~\cite{CoiculescuPalasek2026} proved exact nonuniqueness for the
	unforced equation from a particular datum in
	\(BMO^{-1}(\T^3)\setminus L^2(\T^3)\).  Their nested-Mikado cascade builds
	on the dyadic scenario of~\cite{PalasekDyadic2025} and is not a
	convex-integration scheme.  It branches from one particular
	infinite-energy datum and shows that smallness is essential in the
	Koch--Tataru theory; this is complementary to the finite-energy endpoint
	flexibility obtained here.  A recent preprint by Hou, Wang, and
	Yang~\cite{HouWangYang2026} presents what the authors describe as a
	computer-assisted proof of nonuniqueness for Leray--Hopf solutions of the
	unforced three-dimensional Navier--Stokes equations on \(\R^3\).  This
	result is not used in the present paper.  Ionescu, Jia, and
	Palasek~\cite{IonescuJiaPalasek2026} obtain a conditional result based on
	a candidate axisymmetric profile whose required existence remains open.
	For an overview of instability and convex integration, see
	Colombo~\cite{ColomboICM2026}.
	These results concern three different forms of
	nonuniqueness.  Instability arguments branch from a specially constructed
	coherent profile; critical-space cascades branch from a particular rough
	datum; convex integration prescribes a sequence of stresses at each stage.
	The resulting convex-integration solutions generally lie below the
	Leray--Hopf class.
	Theorem~\ref{thm:main} should be read in this sense: it gives density of
	attainable endpoint traces in the finite-energy space, while the
	nonuniqueness statement deduced later concerns a dense set of initial
	data, not every datum and not the Leray--Hopf class.

	The two-dimensional Euler equation provides relevant background for
	the moving-profile construction.  Yudovich's theorem~\cite{Yudovich1963}
	gives global well-posedness for bounded vorticity, while uniqueness for
	unforced solutions with vorticity merely in \(L^p\), \(p<\infty\), remains
	open.  At the borderline of integrability,
	Bru\`e and Colombo~\cite{BrueColombo2023} first constructed nonunique weak
	Euler solutions with uniformly bounded kinetic energy and vorticity in the
	Lorentz space \(L^{1,\infty}\).  Buck and
	Modena~\cite{BuckModena2024,BuckModena2026} subsequently extended this line
	to the real Hardy spaces \(H^p\), first for \(2/3<p<1\), and later, with
	compact spatial support, for the full range \(0<p<1\).
	Related results for transport equations appear in
	\cite{MS18,MS19,MS20,BrueColomboDeLellis2021,
		BrueColomboKumarTransport2024}; their sharpness is measured against the
	renormalization and well-posedness theories of DiPerna and
	Lions~\cite{DiPernaLions1989} and Ambrosio~\cite{Ambrosio2004}.

	The present construction is based most directly on the work of Bru\`e,
	Colombo, and Kumar~\cite{BrueColomboKumar2024}.  For
	\[
	 1<p<1+\frac{1}{6500},
	\]
	they construct a dense set of initial data in
	\(L^2(\T^2)\cap W^{1,p}(\T^2)\) giving rise to infinitely many
	nonconservative weak Euler solutions in \(C_tL_x^2\) with vorticity in
	\(C_tL_x^p\).
	Their principal perturbation is a Lamb--Chaplygin dipole concentrated in
	a moving ball; see~\cite[\S165]{Lamb1932} and
	\cite{MeleshkoVanHeijst1994} for classical accounts of this structure.
	The Reynolds stress is decomposed into
	rank-one directions, with only one direction activated on each short time
	interval.  The radius and speed of the dipole are coupled to the
	coefficient of that stress component, while its motion along a long
	rational trajectory converts a localized source into the directional
	flux required for cancellation.  This dynamical use of a traveling
	coherent structure is also used in the present paper.
	The passage from that construction to three-dimensional Navier--Stokes is
	not formal.  A planar dipole carries scalar vorticity and travels in a
	preferred direction, whereas Hill's vortex is an axisymmetric
	three-dimensional velocity field whose gradient must be estimated in all
	directions.  Moreover, rescaling the core simultaneously changes its
	energy, its gradient, its transport speed, and the size of the viscous
	defect.  Balancing these effects is what produces the exponent near
	\(6/5\) and distinguishes the present iteration from its Euler precursor.

	For SQG, the weak existence theory was developed in
	\cite{Resnick1995,Marchand2008}.  Buckmaster, Shkoller, and
	Vicol~\cite{BuckmasterShkollerVicol2019} constructed the first nonunique
	weak solutions of the unforced equation, and Isett and
	Ma~\cite{IsettMa2021} later gave a direct scalar-level construction.  In
	related work, Bru\`e, Jin, and Nguyen~\cite{BrueJinNguyen2026} develop a
	moving-profile convex-integration scheme for the inviscid SQG equation on
	\(\T^2\).  For the explicit, nonoptimized exponent
	\(\bar p=4/3+10^{-5}\), their construction approximately connects any two
	prescribed mean-zero states in \(L^{\bar p}(\T^2)\) by a momentum weak
	solution in \(C([0,1];L^{\bar p}(\T^2))\).  It also gives nonuniqueness for
	a dense subset of the mean-zero space \(L^{\bar p}(\T^2)\).  The scheme
	uses localized traveling SQG profiles whose centers move along rational
	directions, and a bilinear null-form estimate controls the derivative loss
	caused by the nonlocal relation \(u=\Lambda v\).

	The present paper follows the moving-profile strategy introduced by Bru\`e,
	Colombo, and Kumar~\cite{BrueColomboKumar2024} for the two-dimensional
	Euler equations.  In passing to the three-dimensional viscous setting, the
	Lamb--Chaplygin dipoles used there are replaced by Hill's spherical vortex,
	control of the scalar vorticity is replaced by control of the full velocity
	gradient, and viscosity introduces an additional error at every stage of
	the iteration.  The related SQG work is a separate adaptation of the same
	general strategy and is not used in the proof of the present paper.
	
	\subsection{Moving Hill vortices and the critical scaling}
	
	Our construction uses a dynamical cancellation.  Hill's vortex is an exact
	traveling Euler flow, so transport and self-interaction cancel before
	antidivergence.  After localization, motion along a rational orbit converts
	the remaining source into a directional stress after time averaging.  The
	Hill scaling preserves the \(L^2\)-norm and the \(L^{6/5}\)-gradient norm,
	and disjoint time scheduling removes interactions between principal
	directions.
	
	Figures~\ref{fig:intro-time-partition}--\ref{fig:intro-orbit-averaging}
	illustrate this three-dimensional version of the mechanism in
	\cite{BrueColomboKumar2024}.
	
	\begin{figure}[htbp]
		\centering
		\begin{tikzpicture}[
			x=1.42cm,
			y=1cm,
			font=\small,
			direction/.style={-{Stealth[length=2.1mm,width=1.45mm]},
				line width=.75pt,HillBlue},
			cutoff/.style={draw=HillTeal,line width=.55pt,
				fill=HillTeal!18,rounded corners=1.2pt}
			]
			\foreach \i/\ang in {1/12,2/78,3/150,4/35,5/105,6/205,
				7/-24,8/58,9/137}{
				\pgfmathsetmacro{\xl}{\i-1}
				\pgfmathsetmacro{\xm}{\i-.5}
				\fill[HillBlue!7] (\xl,0) rectangle (\i,.58);
				\draw[HillBlue!60,line width=.45pt] (\xl,0) rectangle (\i,.58);
				\node[font=\scriptsize] at (\xm,.29)
				{$\mathcal T^{k}_{\i}$};
				\node[font=\scriptsize] at (\xm,1.48) {$\xi_{\i}$};
				\draw[direction]
				(\xm,1.08) ++(\ang:-.23) -- ++(\ang:.46);
				\path[cutoff]
				(\xl+.05,-.28) -- (\xl+.14,-.28) --
				(\xl+.23,-.08) -- (\xl+.77,-.08) --
				(\xl+.86,-.28) -- (\xl+.95,-.28) -- cycle;
			}
			\draw[-{Stealth[length=2.2mm]},line width=.7pt]
			(-.18,.72) -- (9.24,.72) node[right] {$t$};
			\draw[line width=.8pt] (0,.66)--(0,.80);
			\draw[line width=.8pt] (9,.66)--(9,.80);
			\node[above=2pt,font=\scriptsize] at (0,.78)
			{$k\tau_{q+1}$};
			\node[above=2pt,font=\scriptsize] at (9,.78)
			{$(k+1)\tau_{q+1}$};
			\draw[decorate,decoration={brace,mirror,amplitude=4.5pt},
			line width=.55pt]
			(0,-.48) -- (9,-.48)
			node[midway,below=6pt] {$\mathcal T^k$ (length $\tau_{q+1}$)};
			\draw[HillTeal!80,-{Stealth[length=1.7mm]},line width=.5pt]
			(1.55,-.78) -- (1.48,-.19);
			\node[anchor=east,font=\scriptsize,align=right,HillTeal!70!black]
			at (1.55,-.80)
			{$\zeta_i^k=1$ on the shorter\\
				interval $\bar{\mathcal T}_i^k$};
			\node[anchor=west,font=\scriptsize,align=left]
			at (6.35,-.82)
			{at most one active Hill block\\at each time};
			\draw[-{Stealth[length=1.7mm]},line width=.5pt]
			(6.30,-.76) -- (6.05,.17);
		\end{tikzpicture}
		\caption{One time cell is divided into nine subintervals with disjoint
			cutoffs.  Thus at most one block, associated with \(\xi_i\), is active
			at a given time.  The arrows are schematic projections of the nine
			directions in \(\T^3\).}
		\label{fig:intro-time-partition}
	\end{figure}

	\paragraph{The Hill profile.}
	
	Hill's spherical vortex~\cite{Hill1894,Lamb1932} is an axisymmetric,
	no-swirl traveling Euler flow whose azimuthal vorticity is supported in a
	ball.  Its orbital and linear stability under axisymmetric perturbations
	were studied in \cite{Choi2024,ProtasElcrat2016}; these results are not used
	below.

	After rotation and normalization, the profile \(H_e\) satisfies
	\begin{equation*}
		(e\cdot\nabla)H_e
		+\diver(H_e\otimes H_e)+\nabla P_e=0,
		\qquad
		\diver H_e=0.
	\end{equation*}
	Its exterior potential-doublet form permits localization without changing
	the core; see Figure~\ref{fig:intro-hill-geometry}.
	
	\begin{figure}[htbp]
		\centering
		\begin{tikzpicture}[
			font=\small,
			panel/.style={draw=black!45,rounded corners=1.5pt,
				fill=black!1,line width=.45pt},
			flowright/.style={draw=HillBlue!82!black,line width=.62pt,
				postaction={decorate},
				decoration={markings,mark=at position .58 with
					{\arrow{Stealth[length=1.7mm,width=1.15mm]}}}},
			flowleft/.style={draw=HillBlue!82!black,line width=.62pt,
				postaction={decorate},
				decoration={markings,mark=at position .58 with
					{\arrowreversed{Stealth[length=1.7mm,width=1.15mm]}}}},
			coreflow/.style={draw=HillOrange!88!black,line width=.7pt,
				postaction={decorate},
				decoration={markings,mark=at position .25 with
					{\arrow{Stealth[length=1.7mm,width=1.15mm]}}}}
			]
			\begin{scope}[xshift=-3.25cm,x=1.05cm,y=1.05cm]
				\path[panel] (-2.35,-2.05) rectangle (2.35,2.05);
				\draw[black!25,densely dashed] (0,-1.92)--(0,1.92);
				\node[anchor=south west,font=\scriptsize,black!65]
				at (.05,1.67) {symmetry axis};
				\foreach \A in {.78,1.12,1.48,1.88,2.22}{
					\draw[flowright]
					plot[domain=5:175,samples=90,variable=\t]
					({\A*sin(\t)^3},{\A*sin(\t)^2*cos(\t)});
					\draw[flowleft]
					plot[domain=5:175,samples=90,variable=\t]
					({-\A*sin(\t)^3},{\A*sin(\t)^2*cos(\t)});
				}
				\fill[white] (0,0) circle (3.2pt);
				\fill[black] (0,0) circle (1.5pt);
				\draw[-{Stealth[length=1.8mm]},black!65,line width=.45pt]
				(-1.2,-1.55) -- (-.12,-.10);
				\node[anchor=east,font=\scriptsize,align=right]
				at (-1.22,-1.57) {point singularity};
				\node[font=\scriptsize,fill=white,inner sep=1.5pt]
				at (1.15,1.48) {$H=\nabla\!\left(\dfrac{z}{2r^3}\right)$};
				\node at (0,-2.34) {\textup{(a)} Exterior potential doublet};
			\end{scope}
			\begin{scope}[xshift=3.25cm,x=1.05cm,y=1.05cm]
				\path[panel] (-2.35,-2.05) rectangle (2.35,2.05);
				\draw[black!25,densely dashed] (0,-1.92)--(0,1.92);
				\foreach \A in {.78,1.12,1.48,1.88,2.22}{
					\draw[flowright]
					plot[domain=5:175,samples=90,variable=\t]
					({\A*sin(\t)^3},{\A*sin(\t)^2*cos(\t)});
					\draw[flowleft]
					plot[domain=5:175,samples=90,variable=\t]
					({-\A*sin(\t)^3},{\A*sin(\t)^2*cos(\t)});
				}
				\fill[HillCore!13] (0,0) circle (1);
				\draw[HillOrange!85!black,line width=.9pt] (0,0) circle (1);
				\node[HillOrange!80!black,font=\Large] at (-.50,0) {$\odot$};
				\node[HillOrange!80!black,font=\Large] at ( .50,0) {$\otimes$};
				\draw[coreflow]
				(-.06,.76) .. controls (-.56,.70) and (-.82,.38) .. (-.82,.03)
				.. controls (-.82,-.35) and (-.50,-.67) .. (-.08,-.75);
				\draw[coreflow]
				(.06,-.76) .. controls (.56,-.70) and (.82,-.38) .. (.82,-.03)
				.. controls (.82,.35) and (.50,.67) .. (.08,.75);
				\draw[-{Stealth[length=2.2mm,width=1.5mm]},
				HillOrange!88!black,line width=1.05pt]
				(1.55,-.55)--(1.55,.85)
				node[above,font=\scriptsize] {translation $-e$};
				\node[font=\scriptsize,fill=HillCore!6,inner sep=1.2pt]
				at (0,-.42) {vorticity core};
				\node[font=\scriptsize,anchor=west]
				at (.74,-1.02) {$r=1$};
				\node at (0,-2.34) {\textup{(b)} Hill's spherical vortex};
			\end{scope}
			\node[align=center,font=\scriptsize,HillBlue!75!black]
			at (0,2.52) {the exterior fields coincide};
			\draw[{Stealth[length=1.5mm]}-{Stealth[length=1.5mm]},
			HillBlue!65,line width=.55pt]
			(-.58,2.20)--(.58,2.20);
			\node[font=\scriptsize,align=center] at (0,-2.88)
			{$\odot$ / $\otimes$: azimuthal vorticity pointing out of / into
				the meridional plane};
		\end{tikzpicture}
		\caption{Meridional schematic of the Hill building block.  The
			potential doublet in panel~\textup{(a)} is singular at the origin.
			Hill's spherical vortex in panel~\textup{(b)} replaces this singularity
			by a spherical vorticity core.  Outside the unit sphere, the velocity
			is unchanged and equals \(H=\nabla(z/(2r^3))\).  Rotation about the
			symmetry axis gives an azimuthal vorticity field supported in the core,
			and the vortex translates in the direction \(-e\).}
		\label{fig:intro-hill-geometry}
	\end{figure}

	For \(R>0\), define
	\begin{equation*}
		H_{R,e}(x)
		:=
		R^{-1}H_e\!\left(\frac{x}{R^{2/3}}\right),
		\qquad
		P_{R,e}(x)
		:=
		R^{-2}P_e\!\left(\frac{x}{R^{2/3}}\right).
	\end{equation*}
	The core radius is \(R^{2/3}\), the velocity amplitude is \(R^{-1}\), and
	\begin{equation*}
		\|H_{R,e}\|_{L^q}\lesssim_q R^{\frac2q-1},
		\qquad
		\|\nabla H_{R,e}\|_{L^p}\lesssim_p R^{\frac2p-\frac53}.
	\end{equation*}
	Here \(1<q\le\infty\) and \(1\le p\le\infty\).
	Hence the energy and the \(L^{6/5}\)-gradient norm are invariant, in
	agreement with \(W^{1,6/5}(\T^3)\hookrightarrow L^2(\T^3)\).  For
	\(p>6/5\), however,
	\[
	\|\nabla H_{R,e}\|_{L^p}
	=
	R^{-\theta(p)}\|\nabla H_e\|_{L^p},
	\qquad
	\theta(p):=\frac53-\frac2p>0,
	\]
	so concentration creates a loss that must be offset by the amplitude and
	averaging parameters.
	
	This Hill threshold differs from the Navier--Stokes scaling threshold.
	Under
	\[
	u(x,t)\mapsto u_\lambda(x,t)
	:=\lambda u(\lambda x,\lambda^2t),
	\]
	one has
	\[
	\|\nabla u_\lambda\|_{L^\infty(\lambda^{-2}I;L_x^p)}
	=\lambda^{2-3/p}
	\|\nabla u\|_{L^\infty(I;L_x^p)}.
	\]
	Thus the scale-invariant exponent is \(3/2\).  Our theorem crosses the
	Hill threshold \(6/5\) but remains supercritical for Navier--Stokes.
	
	On \(\T^3\), cutoff and mollification give a smooth compactly supported
	profile \(\widetilde H_{R,e}\).  For
	\[
	V^p(x,t)
	=
	A(t)\widetilde H_{R(t),e}(x-X(t)),
	\]
	we introduce the Helmholtz correction
	\[
	V^c=-\nabla\Delta^{-1}\diver V^p,
	\qquad
	V=V^p+V^c.
	\]
	Then \(V\) is divergence free.  Choosing the intrinsic trajectory
	\[
	X'(t)=-\frac{A(t)}{R(t)}e,
	\]
	cancels the leading transport and Euler terms.  With
	\[
	S_{R,e}:=\frac1R\widetilde H_{R,e},
	\qquad
	\int_{\T^3}S_{R,e}(x)\,\dd x=-2\pi e.
	\]
	The block then satisfies schematically
	\begin{equation}\label{eq:intro-block-identity}
		\partial_tV+\diver(V\otimes V)-\nu\Delta V+\nabla P
		=
		\frac{\dd}{\dd t}\bigl(A(t)R(t)\bigr)
		S_{R(t),e}(x-X(t))
		+\diver F,
	\end{equation}
	where \(F\) contains the remaining block errors.

	The viscous contribution is written as the symmetric stress
	\[
	-\nu\Delta V
	=
	\diver\!\left[-\nu\bigl(\nabla V+(\nabla V)^T\bigr)\right].
	\]
	It is estimated at an auxiliary exponent \(1<p_0<6/5\), where
	\(2/p_0-5/3>0\); the limiting gradient is instead measured at
	\(\bar p>6/5\).
	
	\subsection{Strategy of the construction}
	
	A smooth Reynolds flow encodes the endpoint data, and an iteration removes
	its defect with localized Hill vortices moving along rational trajectories.
	At each stage, the stress is frozen on short time cells and decomposed into
	finitely many rank-one tensors.  A moving Hill block treats each tensor,
	while a temporal corrector converts the resulting time-averaged cancellation
	into a pointwise identity.

	\subsubsection*{The zeroth Reynolds flow and the iteration}

	The iteration is formulated for the Navier--Stokes--Reynolds system
	\begin{equation*}
		\partial_tu_q+\diver(u_q\otimes u_q)
		-\nu\Delta u_q+\nabla p_q
		=
		\diver\E_q,
		\qquad
		\diver u_q=0.
	\end{equation*}
	After mollifying \(u_q\), we add the Hill blocks and a temporal corrector.
	The principal blocks cancel the frozen part of \(\E_q\); all remaining
	terms, including mollification, modulation, viscosity, and interactions
	with the coarse velocity, are placed in \(\E_{q+1}\).
	The estimates give \(\E_q\to0\) in \(L_t^\infty L_x^1\), summable
	\(C_tL_x^2\) increments, and a common
	\(C_t^{\alpha_0}L_x^{\bar p}\) gradient bound.  Hence
	\[
	u\in C_tL_x^2,
	\qquad
	\nabla u\in C_tL_x^{\bar p}.
	\]

	The endpoint fields enter only at the initial stage.  Given
	\(u^{(0)},u^{(1)}\in L^2_\sigma(\T^3)\), we mollify them at a common scale
	\(\ell\) and choose a smooth cutoff \(\chi\) that equals one near \(t=0\)
	and zero near \(t=1\).  We then set
	\begin{equation*}
		u_0(x,t)
		:=
		\chi(t)(u^{(0)}*\rho_\ell)(x)
		+(1-\chi(t))(u^{(1)}*\rho_\ell)(x).
	\end{equation*}
	Its defect is placed in a symmetric Reynolds stress using \(\mathcal R\).

	The blocks and corrector vanish at cell interfaces, including \(t=0,1\).
	Thus
	\[
		\|u_{q+1}(\cdot,j)-u_q(\cdot,j)\|_{L^2}
		\le C\lambda_{q+1}^{-\gamma}
		=C\lambda_q^{-1/5},
		\qquad j\in\{0,1\}.
	\]
	These errors are summable.  Suitable choices of \(\ell\) and
	\(\lambda_0\) then give the endpoint approximation.

	\subsubsection*{Stress decomposition and moving Hill blocks}

	The geometric lemma decomposes the mollified stress, up to pressure, into
	nine positive rank-one pieces:
	\[
		-\diver\E_\ell
		=\diver\left(\sum_{i=1}^9a_i(x,t)\xi_i\otimes\xi_i\right)
		+\nabla P^d.
	\]
	The positivity of the coefficients permits them to be encoded in the radius
	and residence time of a Hill vortex.  Each time cell is divided into nine
	disjoint subintervals, one for each direction, and the corresponding
	coefficient is frozen in time as \(a_i^k(x)\).  This scheduling also removes
	the leading quadratic interactions between different directions.

	Let
	\[
		r_{q+1}:=\lambda_{q+1}^{-\mu},
		\qquad \mu>0,
	\]
	be the base concentration parameter at stage \(q+1\); the complete
	parameter hierarchy is stated in \eqref{eq:intro-parameters} below.  For one
	coefficient \(a=a_i^k\) and direction \(\xi=\xi_i\), set
	\[
		R(x)=r_{q+1}a(x),
		\qquad
		A(t)=A_0\zeta(t),
		\qquad
		A_0^2=\frac{9}{2\pi}\int_{\T^3}a(x)\,\dd x.
	\]
	We orient the profile by \(e=-\xi\) and define
	\[
		V^p(x,t)
		=
		A(t)\widetilde H_{R(X(t)),-\xi}(x-X(t)).
	\]
	The center follows
	\begin{equation*}
		\dot X(t)=\frac{A(t)}{R(X(t))}\,\xi.
	\end{equation*}
	Thus \(R=r_{q+1}a\) couples the stress coefficient to the Hill scale, whose
	physical core radius is \(R^{2/3}\), and the trajectory preserves the exact
	Hill cancellation.  The normalization of \(A_0\) compensates for the fact
	that the direction is active only during one ninth of the cell.  Moreover,
	the speed is inversely proportional to \(a\): the vortex spends more time
	in regions where the coefficient is larger, which is the basis of the
	stress reconstruction below.

	\subsubsection*{Reconstruction of the stress by time averages}

	The source in \eqref{eq:intro-block-identity} is first replaced by a smooth
	auxiliary density of mass \(2\pi\).  The difference has zero spatial mean
	and can therefore be written as a divergence with an inverse-frequency
	gain.  Integration by parts along the trajectory converts the auxiliary
	source into a directional tensor flux.  Since the speed is proportional to
	\((r_{q+1}a)^{-1}\), the residence time recovers the coefficient \(a\),
	giving
	\begin{equation}\label{eq:intro-orbit-average}
		\frac1\tau\int_{I_i} U(x,t)\,\dd t
		=\diver\bigl(a(x)\xi\otimes\xi+G(x)\bigr),
		\qquad
		\|G\|_{L^1}
		\lesssim\frac{\|a\|_{C^1}}{\lambda}.
	\end{equation}
	
	A rational trajectory closes on the torus and can be repeated many times
	during one active interval.  The freezing error is
	\(O(\tau\|\partial_ta_i\|_{L^\infty})\), while the \(\lambda^{-1}\)
	factor controls incomplete periods and spatial variation; see
	Figure~\ref{fig:intro-orbit-averaging}.
	
	\begin{figure}[htbp]
		\centering
		\begin{tikzpicture}[
			x=1cm,y=1cm,
			orbit/.style={draw=black!58,line width=.34pt},
			flowarrow/.style={-{Stealth[length=1.6mm,width=1.15mm]}},
			panelbox/.style={draw=black!75,line width=.45pt},
			every node/.style={font=\footnotesize}
			]
			\definecolor{hillblue}{RGB}{25,102,180}
			\definecolor{auxgreen}{RGB}{26,145,120}
			\definecolor{warmred}{RGB}{205,55,45}
			
			\begin{scope}[shift={(0,4.75)}]
				\node[anchor=south west,font=\small] at (-.50,3.70) {\textup{(a)}};
				\fill[warmred!8] (0,0) rectangle (5.80,3.60);
				\foreach \x/\y/\c in {
					0/0/15,1.45/0/23,2.90/0/31,4.35/0/44,
					0/.90/25,1.45/.90/38,2.90/.90/54,4.35/.90/30,
					0/1.80/42,1.45/1.80/64,2.90/1.80/40,4.35/1.80/57,
					0/2.70/28,1.45/2.70/47,2.90/2.70/67,4.35/2.70/35}
				\fill[warmred!\c] (\x,\y) rectangle ++(1.45,.90);
				\begin{scope}
					\clip (0,0) rectangle (5.80,3.60);
					\foreach \s in {-2.94,-1.32,.30,1.92}
					\draw[orbit] (-.15,\s) -- (5.95,{\s+3.18});
					\draw[orbit,flowarrow] (2.30,1.50) -- (3.00,1.87);
					\draw[white!72,line width=.32pt,densely dashed]
					(1.45,0)--(1.45,3.60) (2.90,0)--(2.90,3.60)
					(4.35,0)--(4.35,3.60) (0,.90)--(5.80,.90)
					(0,1.80)--(5.80,1.80) (0,2.70)--(5.80,2.70);
					\foreach \x/\y/\ro/\ri in {
						.48/.55/.34/.15,
						1.75/1.21/.45/.21,
						3.05/1.89/.47/.22,
						4.44/2.62/.58/.28}{
						\fill[hillblue!14] (\x,\y) circle (\ro);
						\draw[hillblue!58,line width=.45pt] (\x,\y) circle (\ro);
						\shade[ball color=hillblue] (\x,\y) circle (\ri);
						\draw[white,flowarrow,line width=.45pt]
						(\x-.08,\y-.042) -- (\x+.09,\y+.047);
					}
					\node[anchor=south west,font=\scriptsize,
					fill=white,fill opacity=.78,text opacity=1,inner sep=.7pt]
					at (.55,.71) {$t_0$};
					\node[anchor=south west,font=\scriptsize,
					fill=white,fill opacity=.78,text opacity=1,inner sep=.7pt]
					at (1.88,1.48) {$t_1$};
					\node[anchor=south west,font=\scriptsize,
					fill=white,fill opacity=.78,text opacity=1,inner sep=.7pt]
					at (3.16,2.07) {$t_2$};
					\node[anchor=south west,font=\scriptsize,
					fill=white,fill opacity=.78,text opacity=1,inner sep=.7pt]
					at (4.66,2.91) {$t_3$};
				\end{scope}
				\draw[panelbox] (0,0) rectangle (5.80,3.60);
				\node[anchor=north,font=\scriptsize] at (2.90,-.12)
				{variable core and localization radii};
			\end{scope}
			
			\begin{scope}[shift={(7.05,4.75)}]
				\node[anchor=south west,font=\small] at (-.50,3.70) {\textup{(b)}};
				\fill[blue!2] (0,0) rectangle (5.80,3.60);
				\begin{scope}
					\clip (0,0) rectangle (5.80,3.60);
					\foreach \s in {-2.94,-1.32,.30,1.92}{
						\draw[hillblue!25,line width=3.2pt,line cap=round]
						(-.20,\s) -- (1.30,{\s+.78});
						\draw[hillblue!25,line width=5.2pt,line cap=round]
						(1.20,{\s+.73}) -- (3.00,{\s+1.67});
						\draw[hillblue!25,line width=4.2pt,line cap=round]
						(2.90,{\s+1.62}) -- (4.40,{\s+2.40});
						\draw[hillblue!25,line width=7.0pt,line cap=round]
						(4.30,{\s+2.35}) -- (5.98,{\s+3.22});
						\draw[hillblue!82,line width=1.15pt]
						(-.20,\s) -- (5.98,{\s+3.22});
					}
					\foreach \x/\y/\r in {
						.48/.55/.09,1.75/1.21/.13,3.05/1.89/.14,4.44/2.62/.18}
					\fill[hillblue!30] (\x,\y) circle (\r);
					\draw[black!50,flowarrow,line width=.42pt]
					(2.36,1.53) -- (3.10,1.92);
					\draw[gray!45,densely dashed,line width=.28pt]
					(1.45,0)--(1.45,3.60) (2.90,0)--(2.90,3.60)
					(4.35,0)--(4.35,3.60) (0,.90)--(5.80,.90)
					(0,1.80)--(5.80,1.80) (0,2.70)--(5.80,2.70);
				\end{scope}
				\draw[panelbox] (0,0) rectangle (5.80,3.60);
				\node[anchor=north,font=\scriptsize] at (2.90,-.12)
				{principal-support trail along the rational orbit};
			\end{scope}
			
			\begin{scope}[shift={(0,0)}]
				\node[anchor=south west,font=\small] at (-.50,3.70) {\textup{(c)}};
				\fill[warmred!8] (0,0) rectangle (5.80,3.60);
				\foreach \x/\y/\c in {
					0/0/15,1.45/0/23,2.90/0/31,4.35/0/44,
					0/.90/25,1.45/.90/38,2.90/.90/54,4.35/.90/30,
					0/1.80/42,1.45/1.80/64,2.90/1.80/40,4.35/1.80/57,
					0/2.70/28,1.45/2.70/47,2.90/2.70/67,4.35/2.70/35}
				\fill[warmred!\c] (\x,\y) rectangle ++(1.45,.90);
				\begin{scope}
					\clip (0,0) rectangle (5.80,3.60);
					\foreach \s in {-2.94,-1.32,.30,1.92}
					\draw[orbit] (-.15,\s) -- (5.95,{\s+3.18});
					\draw[white!72,line width=.32pt,densely dashed]
					(1.45,0)--(1.45,3.60) (2.90,0)--(2.90,3.60)
					(4.35,0)--(4.35,3.60) (0,.90)--(5.80,.90)
					(0,1.80)--(5.80,1.80) (0,2.70)--(5.80,2.70);
					\foreach \x/\y/\tt in {
						.48/.55/0,1.75/1.21/1,3.05/1.89/2,4.44/2.62/3}{
						\fill[auxgreen!22] (\x,\y) circle (.38);
						\draw[auxgreen!90!black,line width=.65pt] (\x,\y) circle (.38);
						\fill[auxgreen!88] (\x,\y) circle (.11);
						\draw[white,flowarrow,line width=.45pt]
						(\x-.10,\y-.052) -- (\x+.11,\y+.057);
						\node[white,anchor=south west,font=\scriptsize]
						at (\x+.13,\y+.13) {$t_{\tt}$};
					}
				\end{scope}
				\draw[panelbox] (0,0) rectangle (5.80,3.60);
				\node[anchor=north,font=\scriptsize] at (2.90,-.12)
				{fixed auxiliary scale \(O(\lambda_{q+1}^{-1})\)};
			\end{scope}
			
			\begin{scope}[shift={(7.05,0)}]
				\node[anchor=south west,font=\small] at (-.50,3.70) {\textup{(d)}};
				\fill[auxgreen!20] (0,0) rectangle (5.80,3.60);
				\begin{scope}
					\clip (0,0) rectangle (5.80,3.60);
					\foreach \s in {-3.05,-2.51,...,3.43}{
						\draw[auxgreen!55,line width=15pt,line cap=round]
						(-.18,\s) -- (5.98,{\s+3.22});
						\draw[auxgreen!88!black,line width=.35pt]
						(-.18,\s) -- (5.98,{\s+3.22});
					}
					\draw[gray!48,densely dashed,line width=.28pt]
					(1.45,0)--(1.45,3.60) (2.90,0)--(2.90,3.60)
					(4.35,0)--(4.35,3.60) (0,.90)--(5.80,.90)
					(0,1.80)--(5.80,1.80) (0,2.70)--(5.80,2.70);
					\draw[black!55,flowarrow,line width=.42pt]
					(2.20,1.35) -- (2.95,1.74);
					\node[fill=white,fill opacity=.88,text opacity=1,
					rounded corners=1pt,inner sep=2pt,font=\scriptsize]
					at (2.90,1.78) {full-orbit average \(=2\pi\)};
				\end{scope}
				\draw[panelbox] (0,0) rectangle (5.80,3.60);
				\node[anchor=north,font=\scriptsize] at (2.90,-.12)
				{full-orbit normalization of the auxiliary density};
			\end{scope}
		\end{tikzpicture}
		\caption{Schematic two-dimensional projection of the three-dimensional
			moving-core construction on \(\T^3\) (not to scale).  In
			panel~\textup{(a)}, the red background represents the frozen
			coefficient \(a_i^k\).  The dark inner balls show the vorticity core
			at scale \((R_i^k)^{2/3}\), while the faint outer balls show the
			support of the principal localized profile at scale
			\((R_i^k)^\alpha\), where \(R_i^k=r_{q+1}a_i^k\).
			Panel~\textup{(b)} shows the resulting variable-width support trail;
			the disjointness condition is
			\(2(R_i^k)^\alpha\leq\lambda_{q+1}^{-1}\).
			Panel~\textup{(c)} shows the auxiliary density
			\(\widetilde U_i^k(\,\cdot-x_i^k(t))\), whose support has the fixed
			scale \(O(\lambda_{q+1}^{-1})\), rather than another velocity vortex.
			Panel~\textup{(d)} depicts its normalized full-orbit average, which is
			used in the approximate stress cancellation
			\eqref{eq:intro-orbit-average}.}
		\label{fig:intro-orbit-averaging}
	\end{figure}
	
	\subsubsection*{The temporal corrector and the new stress}

	The orbit identity cancels the stress only after averaging over a time
	cell.  To restore the equation pointwise in time, sum the nine sources into
	\(U_{\rm tot}\) and define
	\begin{equation*}
		Q(x,t):=-\mathbb P\int_{k\tau}^{t}
		\left(U_{\rm tot}(x,s)
		-\frac1\tau\int_JU_{\rm tot}(x,z)\,\dd z\right)\dd s,
		\qquad t\in J,
	\end{equation*}
	where \(\mathbb P\) is the Leray projection.  Since the integrand has zero
	cell average, \(Q\) vanishes at both endpoints and gains the short factor
	\(\tau\).  Its time derivative cancels the zero-average source, while the
	cell average cancels the frozen stress through
	\eqref{eq:intro-orbit-average}.  The factor \(\tau\) makes \(Q\) smaller
	than the principal blocks in both the energy and gradient estimates.
	
	The new stress contains the coarse-flow, corrector, freezing,
	source-replacement, localization, viscous, and intrinsic block errors.
	They are estimated separately because they use different features of the
	construction: concentration, inverse frequency, or the short time-cell
	factor.  Their bounds make the stress decay and the increments summable,
	while endpoint vanishing preserves the prescribed traces.
	
	\subsection{Crossing the Hill threshold and analytical difficulties}

	\subsubsection*{Competing effects of concentration}

	The main difficulty is that concentration has opposite effects on the
	estimates.  For an energy-sized Hill block,
	\[
		\|\nabla H_{R,e}\|_{L^p}\lesssim R^{\frac2p-\frac53}.
	\]
	Thus small \(R\) improves the intrinsic and viscous errors below \(6/5\),
	but increases the gradient norm above \(6/5\).  We use \(p_0<6/5\) for
	the block errors, \(p_s>1\) close to one for source replacement, and
	\(\bar p>6/5\) for the limiting gradient.  Short time cells reduce the
	freezing and corrector errors, but must still contain enough complete
	rational orbits to average the stress.

	The main parameters are
	\begin{equation}\label{eq:intro-parameters}
		\lambda_{q+1}=\lambda_q^\sigma,
		\qquad
		\delta_q=\lambda_1^{2\gamma}\lambda_q^{-\gamma},
		\qquad
		r_{q+1}=\lambda_{q+1}^{-\mu},
		\qquad
		\tau_{q+1}=\lambda_{q+1}^{-\eta}.
	\end{equation}
	Here \(\delta_q\) is the stress level, \(r_q\) the concentration
	parameter, and \(\tau_q\) the cell length.  The rapid growth of
	\(\lambda_q\) separates consecutive stages, whereas the small exponent
	\(\gamma\) makes the stress decrease slowly enough to absorb the loss above
	the Hill threshold.

	We take \(p_0=28/27\), for which
	\(2/p_0-5/3=11/42>0\).  Together with the intrinsic block choices
	\(\alpha=2/11\) and \(\beta=502/231\), this gives a positive power of the
	Hill scale in the intrinsic and viscous estimates.  The source-replacement
	error is treated at \(p_s>1\) close to one.  The difference between the
	Hill source and the auxiliary source has zero mean, so a localized
	antidivergence produces an inverse-frequency gain; choosing \(p_s\) near
	one keeps the associated concentration loss small.

	\subsubsection*{The balance above \(6/5\)}

	For the final exponent, set
	\[
		\kappa_\ast:=\frac53-\frac2{\bar p}=\frac5{72003}>0.
	\]
	A principal block has amplitude \(\delta_{q+1}^{1/2}\), while its smallest
	Hill parameter is \(r_{q+1}\delta_{q+1}\) and its smallest physical core
	radius is \((r_{q+1}\delta_{q+1})^{2/3}\).  Therefore
	\[
		\|\nabla V_{q+1}\|_{L_t^\infty L_x^{\bar p}}
		\lesssim
		\delta_{q+1}^{1/2}
		(r_{q+1}\delta_{q+1})^{-\kappa_\ast}.
	\]
	Its summability requires
	\[
		-\frac\gamma2+\kappa_\ast(\mu+\gamma)<0.
	\]
	The factor \(\delta_{q+1}^{1/2}\) must compensate for the concentration
	loss.  This explains why increasing either \(\bar p\) or \(\mu\) makes the
	argument harder.

	There is also a restriction from time averaging.  Short cells improve the
	freezing error and the temporal primitive, but a cell must be long enough
	for the vortex to complete the required orbit periods.  The orbit and
	temporal-corrector estimates impose the following sufficient conditions,
	respectively,
	\[
		3-\mu+\frac\gamma2<-\eta,
		\qquad
		-\eta+4-\frac3{\bar p}+\frac{4n}{\sigma}+4\gamma<0.
	\]
	These restrictions are compatible for
	\[
	\sigma=200,
	\qquad n=20,
	\qquad \gamma=10^{-3},
	\qquad \mu=6+2\gamma=6.002,
	\qquad \eta=3,
	\]
	and \(\bar p=6/5+5\times10^{-5}\).  This small gain above \(6/5\)
	reflects the limited margin between the concentration loss and the orbit
	and corrector constraints.  Temporal concentration cannot replace this
	balance: it improves time-integrated norms but does not reduce the required
	\(L_t^\infty L_x^{\bar p}\) norm.

	\subsubsection*{Localization, background interaction, and time continuity}

	Several further errors must be controlled.  Localization and mollification
	create commutators, while the Helmholtz correction restores
	incompressibility after cutting off the exterior potential.  The intrinsic
	Hill motion also produces an interaction with the coarse velocity.  For a
	principal increment \(v_{q+1}\), concentration gives schematically
	\[
		\|v_{q+1}\otimes u_\ell+u_\ell\otimes v_{q+1}\|_{L_t^\infty L_x^1}
		\lesssim
		\delta_{q+1}^{1/2}r_{q+1}^{1/3}
		\lambda_{q+1}^{\frac{2n}{\sigma}+\gamma}.
	\]
	This must be smaller than the next stress level.  The disjoint time
	scheduling removes the leading interactions between different principal
	directions, while orbit averaging, freezing, and the short temporal
	primitive control the remaining terms.

	Finally, for some \(\alpha_0,c_0>0\), the iteration provides estimates of
	the form
	\[
		\|u_{q+1}-u_q\|_{C_tL_x^2}
		\lesssim\delta_{q+1}^{1/2},
		\qquad
		\|\nabla u_{q+1}\|_{C_t^{\alpha_0}L_x^{\bar p}}
		\leq
		\|\nabla u_q\|_{C_t^{\alpha_0}L_x^{\bar p}}
		+C\delta_{q+1}^{c_0}.
	\]
	The first estimate gives strong convergence in \(C_tL_x^2\); the second
	provides a common time modulus for the gradients.  Together they yield
	\(u\in C_tL_x^2\) and \(\nabla u\in C_tL_x^{\bar p}\).  A uniform
	fixed-time gradient bound alone would not give this time continuity.

	\subsection{Organization of the paper}
	
	The remainder of the paper is organized as follows.  Section~2 recalls the
	normalized Hill vortex and establishes estimates for its rescaled,
	localized, and mollified forms.  Section~3 introduces the moving Hill block
	and derives the associated Euler and Navier--Stokes defect identities.
	Section~4 states the iteration step and constructs the next Reynolds flow,
	including the decomposition of the Reynolds stress, the time partition,
	the space-dependent scales, the vortex trajectories, the auxiliary source,
	and the temporal corrector.  Section~5 proves the estimates required for
	the iteration and verifies the parameter restrictions.  Section~6
	constructs the initial Reynolds flow, passes to the limit, and proves
	Theorems~\ref{thm:main} and~\ref{thm:nonuniqueness}.  Section~7 discusses
	the limitations of the method and possible modifications.  Section~8
	presents further directions and open problems.

	\section{Hill's spherical vortex}
	
	Hill's spherical vortex is an axisymmetric, no-swirl solution of the
	three-dimensional Euler equations whose vorticity is supported in a
	ball and which translates without changing shape.
	It was introduced by Hill~\cite{Hill1894}; see also
	Lamb~\cite[\S165]{Lamb1932} for the classical formulas and
	Choi~\cite{Choi2024} for a modern treatment of its orbital stability in
	the axisymmetric no-swirl class.
	Only the explicit traveling-wave structure is used below.

	We record the profile carefully because three features enter the
	convex-integration scheme with their precise normalization: the
	translation speed, the directional integral of the localized velocity,
	and the potential character of the exterior field.  We first write the classical
	vortex in a frame in which the spherical core is stationary, then remove
	the constant velocity at infinity to obtain a decaying traveling profile.
	The following subsection rescales and rotates this unit profile and
	truncates its exterior potential without changing the vorticity core.
	Here \(\xi_i\) denotes a rank-one stress direction.  Also,
	\(Q_e\in SO(3)\) denotes a rotation, whereas \(Q_{q+1}\) is the temporal
	corrector introduced in Section~4.
	
	Let $(r,\theta,\varphi)$ be spherical coordinates with the axis of
	propagation along the \(z\)-axis.  In this preliminary description,
	\(r=|x|\) denotes the spherical radius; it should not be confused with
	the concentration parameters introduced later.  Thus
	\[
	z=r\cos\theta,
	\qquad
	s=r\sin\theta,
	\]
	where $s$ is the cylindrical radius.  The flow is axisymmetric and has
	no swirl; hence
	\[
	u_\varphi=0.
	\]
	
	An axisymmetric no-swirl incompressible velocity is represented by its
	Stokes stream function \(\psi(r,\theta)\), with
	\[
	u_r=\frac{1}{r^2\sin\theta}\,\partial_\theta \psi,
	\qquad
	u_\theta=-\frac{1}{r\sin\theta}\,\partial_r \psi.
	\]
	
	For a Hill vortex of radius \(a\) and translation speed \(U\), we use
	the sign convention
	\[
	\psi(r,\theta)=
	\begin{cases}
		-\dfrac{3U}{4}\left(1-\dfrac{r^2}{a^2}\right)r^2\sin^2\theta,
		& r\le a,\\[1.2ex]
		\dfrac{U}{2}\left(1-\dfrac{a^3}{r^3}\right)r^2\sin^2\theta,
		& r\ge a.
	\end{cases}
	\]
	These formulas agree, up to the choice of propagation direction, with the
	classical formulas in~\cite{Hill1894}; see also
	\cite[\S165]{Lamb1932} for the normalization used here.
	
	The corresponding velocity components are
	\[
	u_r=
	\begin{cases}
		-\dfrac{3U}{2}\left(1-\dfrac{r^2}{a^2}\right)\cos\theta,
		& r\le a,\\[1.2ex]
		U\left(1-\dfrac{a^3}{r^3}\right)\cos\theta,
		& r\ge a,
	\end{cases}
	\]
	\[
	u_\theta=
	\begin{cases}
		\dfrac{3U}{2}\left(1-\dfrac{2r^2}{a^2}\right)\sin\theta,
		& r\le a,\\[1.2ex]
		-\,U\left(1+\dfrac{a^3}{2r^3}\right)\sin\theta,
		& r\ge a,
	\end{cases}
	\]
	\[
	u_\varphi=0.
	\]
	
	The vorticity is purely azimuthal:
	\[
	\omega=\omega_\varphi\,e_\varphi,
	\]
	with
	\[
	\omega_\varphi=
	\begin{cases}
		-\dfrac{15U}{2a^2}\,r\sin\theta,
		& r\le a,\\[1.2ex]
		0,
		& r\ge a.
	\end{cases}
	\]
	In particular, in the normalization used here, the relative vorticity is
	\[
		\frac{\omega_\varphi}{r\sin\theta}
		=-\frac{15U}{2a^2}\mathbf 1_{B_a}.
	\]
	Thus the vorticity is compactly supported even though the
	induced velocity has a noncompact potential tail.
	
	These formulas describe a stationary field in the co-moving frame.  In
	that frame the velocity is an exact steady solution of the
	incompressible Euler equations,
	\[
	(u\cdot\nabla)u+\nabla p=0,
	\qquad
	\nabla\cdot u=0.
	\]
	
	For \(U=a=1\), the co-moving velocity \(u\) tends to \(e_3\) at
	infinity.  Subtracting this constant far-field velocity gives the
	decaying laboratory-frame profile
	\[
		H:=u-e_3.
	\]
	Since \(u=H+e_3\), the steady Euler equation becomes, in the sense of
	distributions,
	\begin{equation*}
		\,\partial_3 H+(H\cdot\nabla) H+\nabla P_H=0,
		\qquad
		\operatorname{div}H=0.
	\end{equation*}
	Thus $H(x+te_3)$ travels in the direction $-e_3$.

	In spherical coordinates, let \(\hat r\), \(\hat\theta\),
		and \(\hat\varphi\) denote the radial, polar, and azimuthal unit vectors,
		respectively.  Since \(H\) has no azimuthal component, it is determined
		by
	\begin{equation*}
		H\cdot \hat r = 
		\begin{cases}
			-\dfrac{3}{2}\left(\dfrac{5}{3}-r^2\right)\cos\theta,
			& r\le 1,\\[1.2ex]
			-\dfrac{1}{r^3}\cos\theta,
			& r\ge 1,
		\end{cases}
	\end{equation*}
	\[
	H\cdot \hat \theta = \begin{cases}
		\dfrac{3}{2}\left(\dfrac{5}{3}-2r^2\right)\sin\theta,
		& r\le 1,\\[1.2ex]
		-\dfrac{1}{2r^3}\sin\theta,
		& r\ge 1.
	\end{cases}
	\]
	
	For $r\ge1$, the profile $H$ coincides with the
		three-dimensional potential doublet
		$H=\nabla\Phi$, where $\Phi(x)=z/(2r^3)$.
	\begin{align*}
		H &= -\frac{1}{r^3}\cos\theta \begin{pmatrix}
			\sin \theta\cos \varphi\\ \sin \theta\sin \varphi\\ \cos \theta
		\end{pmatrix} -\frac{1}{2r^3}\sin\theta\begin{pmatrix}
			\cos \theta\cos \varphi\\ \cos \theta\sin \varphi\\ -\sin \theta
		\end{pmatrix}\nonumber\\
		&= -\frac{1}{r^5}\begin{pmatrix}
			\frac{3}{2}xz\\ \frac{3}{2}yz\\ z^2-\frac{1}{2}(x^2+y^2)
		\end{pmatrix}\nonumber\\
		&=\nabla\!\left(\frac{z}{2r^3}\right)=\nabla\Phi,
		\qquad r\ge1. 
	\end{align*}
	\begin{remark}
		In Cartesian coordinates, \(H\) is given by
		\begin{equation*}
			H = \begin{cases}
				-\begin{pmatrix}
					\frac{3}{2}xz\\\frac{3}{2}yz\\-3(x^2+y^2)-\frac{3}{2}z^2+\frac{5}{2}
				\end{pmatrix}& r\le 1,\\
				-\frac{1}{r^5}\begin{pmatrix}
					\frac{3}{2}xz\\ \frac{3}{2}yz\\ z^2-\frac{1}{2}(x^2+y^2)
				\end{pmatrix}& r\ge 1.
			\end{cases}
		\end{equation*}
		Thus \(H\in C^{0,1}(\R^3)\) and is smooth on
			\(\R^3\setminus\partial B_1(0)\).
	\end{remark}

	We shall use four consequences of these formulas.  The field \(H\) is a
	finite-energy divergence-free traveling Euler profile; its vorticity is
	confined to \(B_1\); its exterior velocity is the gradient
	\(\nabla\Phi\); and its only loss of smoothness occurs across
	\(\partial B_1\).  The potential representation permits us to remove the
	long-range tail by a cutoff.  Since the profile is only Lipschitz across
	the spherical interface, we subsequently mollify it to obtain the required
	higher-derivative estimates.
	
	\subsection{Scaled Hill vortex profile}
	
	We first introduce the spatial and amplitude scalings used in the
	construction.  The spatial scale of the vorticity core is \(R^{2/3}\),
	whereas the amplitude is \(R^{-1}\).  This preserves the \(L^2\)-size of
	the profile and changes its traveling speed from one to \(R^{-1}\).
	We then cut off the irrotational exterior field at the larger scale
	\(R^\alpha\).  Since \(\alpha<2/3\), this localization does not alter the
	vorticity core.
	
	For \(R>0\), define the rescaled Hill profile by
	\[
	H_R(x):=\frac1R\,H\!\left(\frac{x}{R^{\frac{2}{3}}}\right),
	\qquad
	P_R(x):=\frac1{R^2}\,P_H\!\left(\frac{x}{R^\frac{2}{3}}\right).
	\]
	Then
	\begin{equation}\label{scaled eulerprofile}
		\frac{1}{R}\,\partial_3 H_R+\operatorname{div}(H_R\otimes H_R)+\nabla P_R=0,
		\qquad
		\operatorname{div}H_R=0.
	\end{equation}
	
	More generally, for \(e\in\mathbb S^2\), choose
		\(Q_e\in SO(3)\) with \(Q_ee_3=e\), and define
	\[
	H_{R,e}(x):=Q_e H_R(Q_e^T x),
	\qquad
	P_{R,e}(x):=P_R(Q_e^T x).
	\]
	The definition is independent of the particular choice of \(Q_e\),
	because the Hill profile is axisymmetric about \(e_3\).  The field
	\(H_{R,e}\) is oriented along \(e\), travels intrinsically in the
	direction \(-e\), and satisfies
	\begin{equation*}
		\frac{1}{R}(e\cdot\nabla)H_{R,e}
		+\operatorname{div}(H_{R,e}\otimes H_{R,e})
		+\nabla P_{R,e}=0,
		\qquad
		\operatorname{div}H_{R,e}=0.
	\end{equation*}
	Equivalently,
	\[
		(x,t)\longmapsto
		H_{R,e}\!\left(x+\frac{t}{R}e\right)
	\]
	is a traveling Euler solution whose center moves with velocity
	\(-R^{-1}e\).
	
	Fix $0<R\le R_0<1$ and $0<\alpha<2/3$.  Let
		$\chi\in C^\infty(\R^3)$ be radial, with $\chi=0$ on $B_1$,
		$\chi=1$ on $\R^3\setminus B_2$, and $0\le\chi\le1$.  Set
	\[
	\chi_\alpha(x):=\chi\!\left(\frac{x}{R^\alpha}\right).
	\]
	Since \(R^{2/3}<R^\alpha\), the transition annulus
	\[
	A_R:=B_{2R^\alpha}(0)\setminus \overline{B}_{R^\alpha}(0)
	\]
	lies strictly outside the spherical vorticity core.  For
	\(|x|\ge R^{2/3}\), the rescaled profile is a gradient.  Indeed, the
	\((-3)\)-homogeneity of \(\nabla\Phi\) gives
	\[
	H_R = \frac1R\,H\!\left(\frac{x}{R^{\frac{2}{3}}}\right)=\frac1R\,\nabla\Phi\!\left(\frac{x}{R^{\frac{2}{3}}}\right)
	=R\nabla\Phi(x).
	\]
	
	The scalar potential and the localized Hill profile are therefore
	defined by
	\begin{equation}
		\label{truncation}
		\Pi_R:=R\chi_\alpha\Phi,
		\qquad
		\bar H_R:=H_R-\nabla\Pi_R
		=H_R-R\nabla(\chi_\alpha\Phi).
	\end{equation}
	Although \(\Phi\) is singular at the origin, \(\Pi_R\) is smooth because
	\(\chi_\alpha\) vanishes in a neighborhood of the origin.
	Thus \(\bar H_R=H_R\) on \(B_{R^\alpha}\), while
	\(\bar H_R=0\) on \(\R^3\setminus B_{2R^\alpha}\).  In particular, the
	gradient correction preserves the vorticity exactly:
	\[
		\nabla\times\bar H_R=\nabla\times H_R.
	\]
	It introduces only a divergence error in the annulus \(A_R\).  Since
	\(\Delta\Phi=0\) on \(\R^3\setminus\{0\}\),
	\begin{equation*}
		\diver \bar H_R
		=-R\Delta(\chi_\alpha\Phi)
		=-R(\Delta\chi_\alpha)\Phi
		-2R\nabla\chi_\alpha\cdot\nabla\Phi,
		\qquad \supp\diver\bar H_R\subseteq \overline A_R.
	\end{equation*}
	\begin{remark}
		As shown in Lemma~\ref{lemm 1}, the scaling and radial localization
		give
		\[
			\frac1R\int_{\R^3}\bar H_R(x)\,\dd x=-2\pi e_3,
		\]
		independently of \(R\).  The same scaling produces the factor
		\(R^{-1}\) in the traveling equation and in the trajectory equation
		of Proposition~\ref{main prop}.
	\end{remark}
	
	\subsection{Estimates for the building block}
	
	We collect the estimates for the localized profile.  The first lemma
	records its support, Lebesgue norms, derivative bounds, and directional
	integral.  We then derive the approximate traveling equation and the
	identity obtained by differentiating in \(R\), and finally mollify the
	spherical interface.
	
	\begin{lemma}\label{lemm 1}
		The fields $\bar H_R$ and $\Pi_R$, defined in \eqref{truncation},
		satisfy the following properties, with constants independent of
		\(R\in(0,R_0]\).  The constants may depend on \(\alpha\), \(R_0\),
		and the cutoff \(\chi\), and also on the displayed Lebesgue exponent.
		\begin{enumerate}
			\item The localization identities are
				\[
				\bar H_R=H_R,\quad \Pi_R=0
				\quad\text{on }B_{R^\alpha}(0),
				\qquad
				\bar H_R=0,\quad \nabla\Pi_R=H_R
				\quad\text{on }\R^3\setminus B_{2R^\alpha}(0).
				\]
				Consequently,
				\(\supp\bar H_R\subseteq\overline B_{2R^\alpha}(0)\) and
				\(\supp\diver\bar H_R\subseteq\overline A_R\).
			\item $\|\bar H_R\|_{L^1}\leq CR(1+|\log R|)$ and
				$\|\bar H_R\|_{L^p}\leq CR^{\frac{2}{p}-1}$ for every
				$1<p\le \infty$.
			\item For every \(1\le p\le\infty\),
				$\|\nabla\bar H_R\|_{L^p}\leq CR^{\frac{2}{p}-\frac{5}{3}}$,
				\[
				\|\nabla^2\bar H_R\|_{L^p(\R^3\setminus
				\partial B_{R^{2/3}})}
				\leq CR^{\frac{2}{p}-\frac{7}{3}},
				\qquad
				\|\diver\bar H_R\|_{L^p}
				\leq CR^{1-4\alpha+\frac{3\alpha}{p}}.
				\]
			\item \begin{equation}
				\label{eq:hill-directional-integral}
				\frac{1}{R}\int_{\mathbb{R}^3}\bar H_R(x)\,\dd x = \begin{pmatrix}
					0\\0\\ -2\pi
				\end{pmatrix}.
			\end{equation}
		\end{enumerate}
	\end{lemma}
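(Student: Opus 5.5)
The plan is to prove all four items by splitting $\R^3$ into the core $B_{R^{2/3}}$, the exterior potential region $R^{2/3}\le|x|\le 2R^\alpha$, and the complement of $B_{2R^\alpha}$. Each piece is then estimated using the explicit Cartesian formulas for $H$ and the $(-3)$-homogeneity of $\nabla\Phi$. Throughout we use $R\le R_0<1$ and $\alpha<2/3$, so that $R^{2/3}<R^\alpha$ with a uniform ratio $R^{\alpha-2/3}\le R_0^{\alpha-2/3}$ bounded away from one.

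\textbf{Item 1.} This follows directly from the definition \eqref{truncation}. Since $\chi_\alpha=0$ on $B_{R^\alpha}$, we get $\Pi_R=0$ and $\bar H_R=H_R$ there. Since $\chi_\alpha=1$ outside $B_{2R^\alpha}$ and $H_R=R\nabla\Phi$ for $|x|\ge R^{2/3}$, we get $\nabla\Pi_R=R\nabla\Phi=H_R$, hence $\bar H_R=0$. The support of $\diver\bar H_R$ is then read off from the already displayed identity $\diver\bar H_R=-R(\Delta\chi_\alpha)\Phi-2R\nabla\chi_\alpha\cdot\nabla\Phi$.

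\textbf{Items 2 and 3.} On the core, $H_R=R^{-1}H(x/R^{2/3})$ with $H$ bounded and Lipschitz on $B_1$, and smooth up to $\partial B_1$ from inside. Change of variables gives a contribution $R^{-1}R^{2/p}$ to $\|\bar H_R\|_{L^p}$ and $R^{-1}R^{-2/3}R^{2/p}$ to $\|\nabla\bar H_R\|_{L^p}$. It likewise gives $R^{-1}R^{-4/3}R^{2/p}$ to the $L^p$ norm of $\nabla^2\bar H_R$, taken off the interface.

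On the exterior region we write $\bar H_R=R\nabla\bigl((1-\chi_\alpha)\Phi\bigr)$. The term $R(1-\chi_\alpha)\nabla\Phi$ is bounded by $R|x|^{-3}$, its gradient by $R|x|^{-4}$, and its second derivatives by $R|x|^{-5}$. Integrating these over $|x|\ge R^{2/3}$ gives $R\,(R^{2/3})^{3/p-3}=R^{2/p-1}$ for $p>1$, and the same power structure for the derivatives. For $\nabla\bar H_R$ and $\nabla^2\bar H_R$ the integrals converge already at $p=1$. For $\bar H_R$ itself at $p=1$ the radial integral $\int_{R^{2/3}}^{2R^\alpha}r^{-1}\,\dd r$ produces the factor $1+|\log R|$.

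The cutoff terms involve $\nabla^j\chi_\alpha\sim R^{-j\alpha}$ together with $|\Phi|\lesssim R^{-2\alpha}$ on $A_R$. They are bounded by the corresponding powers at the larger radius $R^\alpha$, and hence by the core/exterior bounds above, because $R^\alpha\ge R^{2/3}$ and the exponents have the right sign. For the divergence we use the displayed formula. Both terms are $O(R\cdot R^{-2\alpha}\cdot R^{-2\alpha})$ pointwise on $A_R$, a set of measure $\sim R^{3\alpha}$, which yields $CR^{1-4\alpha+3\alpha/p}$.

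\textbf{Item 4.} Write the integral as the core contribution plus the exterior contribution. On the core, scaling gives $R^{-1}\int_{B_{R^{2/3}}}H_R=R^{-2}R^{2}\int_{B_1}H=\int_{B_1}H$. Using $\int_{B_1}x^2=4\pi/15$ and $\int_{B_1}1=4\pi/3$, the third component equals $3\cdot\frac{8\pi}{15}+\frac32\cdot\frac{4\pi}{15}-\frac52\cdot\frac{4\pi}{3}=-\frac{4\pi}{3}$, and the horizontal components vanish by oddness.

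On the exterior, the divergence theorem applied to $\nabla\bigl((1-\chi_\alpha)\Phi\bigr)$, which is compactly supported, leaves only the inner sphere:
\[
\int_{|x|>R^{2/3}}\nabla\bigl((1-\chi_\alpha)\Phi\bigr)\,\dd x=-\int_{|x|=R^{2/3}}\Phi\,n\,\dd S .
\]
Here $n$ is the outward normal of the ball. By $(-2)$-homogeneity of $\Phi$ this surface integral is scale invariant and equals $-\tfrac12\int_{\mathbb S^2}z\,n\,\dd S=-\tfrac{2\pi}{3}e_3$. Summing the two contributions gives $-2\pi e_3$.

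The main obstacle is not conceptual but bookkeeping. We must ensure that every constant is uniform in $R\in(0,R_0]$, which uses only the fixed ratio between $R^{2/3}$ and $R^\alpha$. We must also treat the endpoint $p=1$ (the logarithm) separately, and exclude $\partial B_{R^{2/3}}$ in the second-derivative bound, since $H$ is only Lipschitz across the interface.
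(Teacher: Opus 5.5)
Your proof is correct. Items 1--3 follow the paper's proof essentially verbatim: pointwise bounds $R^{-1-2n/3}$ on the core and $R|x|^{-n-3}$ on $R^{2/3}<|x|<2R^\alpha$, integrated in polar coordinates, together with the bound $R^{1-4\alpha}\mathbf 1_{A_R}$ for the divergence.

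There is one harmless slip. Since $\alpha-2/3<0$, the ratio satisfies $R^{\alpha-2/3}\ge R_0^{\alpha-2/3}>1$, not $\le$. None of your estimates actually uses this ratio.

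Item 4 is where you take a genuinely different route.

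\textbf{The paper's route.} It never uses the interior formula. It writes $\frac1R\int\bar H_R=-\frac1R\int x\,\diver\bar H_R$ and inserts the annular formula for $\diver\bar H_R$. One more integration by parts reduces everything to $-\frac32\bigl(\int_0^\infty\chi_\alpha'\,\dd r\bigr)\int_{\mathbb S^2}\cos\theta\,\hat r\,\dd\omega=-2\pi e_3$. So the integral is read off from the dipole far field and the fact that $\chi$ rises from $0$ to $1$.

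\textbf{Your route.} You split at the interface. The core gives $\int_{B_1}H=-\frac{4\pi}{3}e_3$ from the explicit polynomial. The exterior gives the inner-sphere flux $-\int_{|x|=R^{2/3}}\Phi\,n\,\dd S=-\frac{2\pi}{3}e_3$. This flux is independent of $\chi$ and $\alpha$ because $\chi_\alpha$ vanishes on that sphere, and independent of $R$ by the $(-2)$-homogeneity of $\Phi$.

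\textbf{Comparison.} Both computations are equally short. The paper's makes transparent that only the exterior potential, i.e.\ the dipole moment, matters. Yours is more elementary but depends on the interior Cartesian formula. If you want a check that avoids that formula, use $\diver H=0$: this gives $\int_{B_1}H_3=\int_{\partial B_1}z\,(H\cdot n)\,\dd S=-\int_{\mathbb S^2}\cos^2\theta\,\dd\omega=-\frac{4\pi}{3}$.
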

	\begin{proof}
		The identities in Part~1 follow from the definition of the cutoff and
			the exterior formula \(H_R=R\nabla\Phi\).

		For the norm estimates, the explicit formulas for \(H\) and \(\Phi\)
			give, for \(n=0,1,2\),
		\begin{equation}\label{eq:hill-pointwise-derivatives}
			|\nabla^n H_R(x)|\le
			\begin{cases}
				CR^{-1-\frac{2n}{3}},& |x|<R^{2/3},\\
				CR|x|^{-n-3},& |x|>R^{2/3},
			\end{cases}
		\end{equation}
		where the estimate for \(n=2\) is understood away from the interface.
		Moreover, on the support of a derivative of \(\chi_\alpha\) one has
			\(|x|\simeq R^\alpha\).  Hence, for \(n=0,1,2\),
		\begin{equation}\label{eq:Pi-pointwise}
			|\nabla^n\Pi_R(x)|\le CR|x|^{-n-2},
			\qquad x\ne0.
		\end{equation}
		Combining \eqref{eq:hill-pointwise-derivatives} and
			\eqref{eq:Pi-pointwise}, we obtain
		\begin{equation}\label{est barH}
			|\nabla^n\bar H_R(x)|\le
			\begin{cases}
				CR^{-1-\frac{2n}{3}},& |x|<R^{2/3},\\
				CR|x|^{-n-3},& R^{2/3}<|x|<2R^\alpha,
			\end{cases}
		\end{equation}
		again with \(n=2\) understood away from
			\(\partial B_{R^{2/3}}\).  Integrating \eqref{est barH} in polar
			coordinates proves the \(L^p\)-bounds in Parts~2 and~3.  At \(p=1\),
			the exterior contribution to \(\|\bar H_R\|_{L^1}\) is
			\(CR\int_{R^{2/3}}^{2R^\alpha}r^{-1}\,\dd r\), which accounts for
			the factor \(1+|\log R|\).

		It remains to estimate the divergence.  On \(A_R\),
			\(|\nabla^k\chi_\alpha|\le CR^{-k\alpha}\), while
			\(|\Phi|\le CR^{-2\alpha}\) and
			\(|\nabla\Phi|\le CR^{-3\alpha}\).  Thus
		\[
			|\diver\bar H_R|
			\le CR^{1-4\alpha}\mathbf 1_{A_R},
		\]
		which gives
			\(\|\diver\bar H_R\|_{L^p}\le
			CR^{1-4\alpha+3\alpha/p}\).
		The warning concerning the second derivative is essential: \(H\) is
			Lipschitz across the spherical interface, but it need not have a
			classical second derivative there.  The mollification below removes
			this singularity.

		Finally, compact support of \(\bar H_R\) and integration by parts give,
			componentwise,
		\begin{align*}
			\frac1R\int_{\R^3}\bar H_R(x)\,\dd x
			&=-\frac1R\int_{\R^3}x\,\diver\bar H_R(x)\,\dd x\\
			&=\int_{\R^3}x\bigl((\Delta\chi_\alpha)\Phi
			+2\nabla\chi_\alpha\cdot\nabla\Phi\bigr)\,\dd x\\
			&=-\int_{\R^3}\Phi\nabla\chi_\alpha\,\dd x
			+\int_{\R^3}x(\nabla\chi_\alpha\cdot\nabla\Phi)\,\dd x.
		\end{align*}
		Write \(\chi_\alpha'(r)=\frac{\dd}{\dd r}\chi(r/R^\alpha)\).
		Since \(\Phi=\cos\theta/(2r^2)\) and
			\(\partial_r\Phi=-\cos\theta/r^3\), the last line equals
		\[
			-\frac32\left(\int_0^\infty\chi_\alpha'(r)\,\dd r\right)
			\int_0^{2\pi}\!\int_0^\pi
			\cos\theta\,\widehat r\,\sin\theta\,\dd\theta\,\dd\varphi.
		\]
		The radial integral is one.  The first two angular components vanish,
			and the third is
		\[
			2\pi\int_0^\pi\cos^2\theta\sin\theta\,\dd\theta
			=\frac{4\pi}{3}.
		\]
		This proves \eqref{eq:hill-directional-integral}.
	\end{proof}

	The next proposition shows that the truncated block
		\(\bar H_R\) satisfies the traveling Euler-profile equation up to error
		terms.  We also compute \(\partial_R\bar H_R\), because the scale varies
		along the moving trajectory.
		We use the following Bogovski\u{\i} notation; see
		\cite{Bogovskii1979,Galdi2011} and, for the scaling-invariant \(L^p\)
		bound used below, \cite{AcostaDuranMuschietti2006}.
		If \(f=(f_1,f_2,f_3)\) is a vector field supported in a ball \(B_r\)
		and \(\int_{B_r}f_i\,\dd x=0\) for each \(i\), we define the
		componentwise Bogovski\u{\i} tensor by
		\[
			(\mathcal B_r f)_{ij}:=(\mathcal B_r f_i)_j.
		\]
		It is extended by zero outside \(B_r\) and satisfies
		\[
		\diver\mathcal B_r f=f,\qquad
		\supp\mathcal B_r f\subseteq\overline B_r,\qquad
		\|\mathcal B_r f\|_{L^p}\le Cr\|f\|_{L^p},
		\quad 1<p<\infty.
		\]
		This componentwise tensor is not required to be symmetric; symmetry is
		restored later by the symmetric inverse-divergence operator on the torus.
	
	\begin{proposition}
		Let \(\alpha\in(0,2/3)\).  Then the truncated block
			\(\bar H_R\) satisfies the following identities in the sense of
			distributions on \(\R^3\).
		\begin{equation}\label{eqn of prinblock}
			\frac{1}{R}\,\partial_3 \bar H_R+\operatorname{div}(\bar H_R\otimes \bar H_R)+\nabla P_1= \diver F_1,
		\end{equation}
		\begin{equation}\label{derivative with R}
			\partial_R\bar H_R = \frac{1}{R} \bar H_R+\nabla P_2+ \diver F_2,
		\end{equation}
		where the error and pressure terms have the following properties:
		\begin{enumerate}
			\item After fixing the additive constant in \(P_1\),
				\[
				\supp P_1,\ \supp P_2,\ \supp F_1,\ \supp F_2
				\subseteq\overline B_{2R^\alpha}(0).
				\]
			\item For every \(p\in(1,\infty)\),
				\(\|F_1\|_{L^p}\le CR^{2-6\alpha+3\alpha/p}\) and
				\(\|F_2\|_{L^p}\le CR^{2/p-4/3}\).
			\item
				\[
				\|\nabla P_2\|_{L^\infty}\le CR^{-3\alpha},
				\qquad
				\|\nabla^2P_2\|_{L^\infty}\le CR^{-4\alpha},
				\qquad
				\|\diver F_2\|_{L^\infty}\le CR^{-2},
				\]
				and, away from the spherical interface,
				\[
				\|\nabla\diver F_2\|_
				{L^\infty(\R^3\setminus\partial B_{R^{2/3}})}
				\le CR^{-8/3}.
				\]
		\end{enumerate}
	\end{proposition}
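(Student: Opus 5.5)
For \eqref{eqn of prinblock} we start from the exact scaled traveling equation \eqref{scaled eulerprofile} for \(H_R\) and substitute \(H_R=\bar H_R+\nabla\Pi_R\). Since \(\nabla\Pi_R\) is a gradient of a function harmonic on the support of \(\chi_\alpha\) away from the annulus, the cross and quadratic terms it produces are gradients outside \(A_R\). On \(\R^3\setminus B_{2R^\alpha}\), \(H_R=R\nabla\Phi\) is potential, so the exterior Euler equation reduces to Bernoulli: \(\frac1R\partial_3H_R+\diver(H_R\otimes H_R)=-\nabla\bigl(\partial_3\Phi+\tfrac{R^2}{2}|\nabla\Phi|^2\bigr)\). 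We therefore take
\[
P_1:=P_R\bigl(1-\chi_\alpha\bigr)-\chi_\alpha\Bigl(\partial_3\Phi+\tfrac{R^2}{2}|\nabla\Phi|^2\Bigr),
\]
after fixing the constant in \(P_R\) so that \(P_R=-\partial_3\Phi-\tfrac{R^2}2|\nabla\Phi|^2\) outside the core (Bernoulli's constant). With this choice the left side of \eqref{eqn of prinblock} vanishes on \(B_{R^\alpha}\) (where everything equals the Hill quantities) and on \(\R^3\setminus B_{2R^\alpha}\) (where everything vanishes). The remainder \(g:=\frac1R\partial_3\bar H_R+\diver(\bar H_R\otimes\bar H_R)+\nabla P_1\) is then supported in \(\overline A_R\).

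We next check that \(g\) has zero integral. This holds because each term is a derivative of a compactly supported function. We then set \(F_1:=\mathcal B_{2R^\alpha}g\), so that \(\diver F_1=g\) and \(\supp F_1\subseteq\overline B_{2R^\alpha}\). For the size, on \(A_R\) the pointwise bounds \eqref{est barH}, \eqref{eq:Pi-pointwise} and \(|\nabla^k\chi_\alpha|\lesssim R^{-k\alpha}\) give \(|\bar H_R|\lesssim R^{1-3\alpha}\) and \(|\nabla\bar H_R|\lesssim R^{1-4\alpha}\). Hence \(|\diver(\bar H_R\otimes\bar H_R)|\lesssim R^{2-7\alpha}\), and the \(\partial_3\) term and the pressure terms are of order \(R^{-4\alpha}\) times powers of \(R\).

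Here lies the main subtlety. The linear term \(\frac1R\partial_3\bar H_R\sim R^{-4\alpha}\) and the linear part \(\nabla(\chi_\alpha\partial_3\Phi)\) of \(\nabla P_1\) do not cancel pointwise on the annulus, since \(\partial_3\nabla(\chi_\alpha\Phi)\ne\nabla(\chi_\alpha\partial_3\Phi)\). Their difference is \(\nabla\bigl((\partial_3\chi_\alpha)\Phi\bigr)\), which is of size \(R^{-4\alpha}\). To avoid this loss we instead absorb it into \(P_1\) by adding \(-(\partial_3\chi_\alpha)\Phi\), which is a gradient and still supported in \(\overline B_{2R^\alpha}\). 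The genuinely non-gradient remainder is then quadratic, \(O(R^{2-7\alpha})\) on a set of volume \(R^{3\alpha}\). The Bogovski\u{\i} bound then yields \(\|F_1\|_{L^p}\lesssim R^\alpha R^{2-7\alpha}R^{3\alpha/p}=R^{2-6\alpha+3\alpha/p}\). Confirming that the linear terms indeed collapse to a pure gradient plus quadratic remainders is the step I expect to require the most careful bookkeeping.

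For \eqref{derivative with R} we differentiate \(H_R=R^{-1}H(xR^{-2/3})\) in \(R\), which gives
\[
\partial_RH_R=-\tfrac1RH_R-\tfrac{2}{3R}(x\cdot\nabla)H_R.
\]
Using \((x\cdot\nabla)H_R=\diver(H_R\otimes x)-3H_R\), this becomes \(\tfrac1RH_R-\tfrac{2}{3R}\diver(H_R\otimes x)\). For the truncated field, \(\partial_R\bar H_R=\partial_RH_R-\nabla\partial_R\Pi_R\), and we apply the same identity with \(\bar H_R\) in place of \(H_R\). The correction terms are then gradients: \(\nabla\partial_R\Pi_R\), together with the difference \(\tfrac1R\nabla\Pi_R-\tfrac{2}{3R}\diver(\nabla\Pi_R\otimes x)\), which we localize using \(\chi_\alpha\). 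These define \(P_2\), and any leftover non-gradient piece supported in \(A_R\) is handled by \(\mathcal B_{2R^\alpha}\) as before.

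We then take \(F_2:=-\tfrac{2}{3R}\bar H_R\otimes x\) plus the small Bogovski\u{\i} term. This gives \(\|F_2\|_{L^p}\lesssim R^{-1}R^{2/3}\|\bar H_R\|_{L^p(B_{R^{2/3}})}+\dots\lesssim R^{2/p-4/3}\), where the exterior tail is dominated via \eqref{est barH}. The \(L^\infty\) bounds follow directly from the pointwise estimates:
\[
|\nabla P_2|\lesssim R|x|^{-3}\le R^{-3\alpha}\ \text{(up to powers of }R\text{)},\qquad \diver F_2\sim R^{-1}\nabla H_R\sim R^{-2},\qquad \nabla\diver F_2\sim R^{-8/3},
\]
the last taken away from \(\partial B_{R^{2/3}}\).
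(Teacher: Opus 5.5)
There is a genuine gap, and it is in the scale-derivative identity. Your tensor \(F_2=-\frac{2}{3R}\bar H_R\otimes x\) does not satisfy \(\|F_2\|_{L^p}\le CR^{2/p-4/3}\) for \(1<p\le 3/2\). In that range the exterior tail is \emph{not} dominated by the core.

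On \(R^{2/3}<|x|<R^\alpha\) one has \(\bar H_R=R\nabla\Phi\), and
\(|\nabla\Phi(x)|=|x|^{-4}\bigl(x_3^2+\tfrac14(x_1^2+x_2^2)\bigr)^{1/2}\ge\tfrac12|x|^{-3}\).
Hence \(|F_2|\ge\tfrac13|x|^{-2}\) there, and
\[
\|F_2\|_{L^p}^p\gtrsim\int_{R^{2/3}}^{R^\alpha}r^{2-2p}\,\dd r\gtrsim R^{\alpha(3-2p)},\qquad 1<p<\tfrac32 .
\]
So \(\|F_2\|_{L^p}\gtrsim R^{\alpha(3/p-2)}\). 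Since \(\alpha<2/3\), this is much larger than \(R^{\frac23(3/p-2)}=R^{2/p-4/3}\). At \(p=3/2\) you still lose a factor \(|\log R|^{2/3}\). Only for \(p>3/2\) is the tail controlled by the core.

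This range is not peripheral: the block estimates are later applied at \(p_0=28/27\), where \(2/p_0-4/3\) enters \(\kappa\). The extra Bogovski\u{\i} term at scale \(R^\alpha\) that you mention cannot remove this piece. It lives on all of \(R^{2/3}<|x|<2R^\alpha\) and is not small.

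The missing observation is that this exterior piece is divergence free. Outside the core, \(H_R=R\nabla\Phi\) is homogeneous of degree \(-3\), so Euler's relation gives \(\diver(H_R\otimes x)=(x\cdot\nabla)H_R+3H_R=0\) for \(|x|>R^{2/3}\).

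The paper therefore writes
\(\partial_R\bar H_R=\frac1R\bar H_R-\nabla(R\Phi\,\partial_R\chi_\alpha)-\frac23D_R\), with \(D_R:=\diver(H_R\otimes\frac xR)\) supported in \(\overline B_{R^{2/3}}\). Integrating this identity and using that \(\frac1R\int\bar H_R=-2\pi e_3\) is independent of \(R\) gives \(\int D_R=0\). Then \(F_2:=-\frac23\mathcal B_{R^{2/3}}D_R\) is supported in the core and satisfies \(\|F_2\|_{L^p}\le CR^{2/3}\|D_R\|_{L^p}\le CR^{2/p-4/3}\) for every \(p\in(1,\infty)\).

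In your bookkeeping this amounts to splitting
\(-\frac{2}{3R}\diver(\bar H_R\otimes x)=-\frac23D_R+\frac23\nabla\bigl((x\cdot\nabla\chi_\alpha)\Phi\bigr)\)
and inverting only \(D_R\), at the core scale. Two further points on this part:
\begin{itemize}
\item Your gradient corrections combine exactly to \(\nabla\bigl[(\alpha-\frac23)(x\cdot\nabla\chi_\alpha)\Phi\bigr]\). There is no non-gradient leftover on \(A_R\). The individual pieces \(\nabla\partial_R\Pi_R\) and \(\frac1R\nabla\Pi_R\) are not compactly supported; only their sum is.
\item \(|\nabla P_2|\) is of order \(R^{-3\alpha}\), not \(R\cdot R^{-3\alpha}\).
\end{itemize}

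For the traveling identity your route is sound in substance: a single Bogovski\u{\i} inversion of the quadratic remainder \((\diver\bar H_R)\bar H_R=O(R^{2-7\alpha})\) on \(A_R\), with zero mean by compact support. There is, however, a slip. Under your Bernoulli normalization the displayed \(P_1\) is identically \(P_R\), so it does not vanish outside \(B_{2R^\alpha}\). You want \(P_1=(1-\chi_\alpha)P_R+(\partial_3\chi_\alpha)\Phi\). The ``subtlety'' you flag disappears once you note \(\frac1R\partial_3\nabla\Pi_R=\nabla(\frac1R\partial_3\Pi_R)\).
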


	\begin{proof}
		The calculation follows the localization mechanism in
			\cite[Proposition~2.1]{BrueColomboKumar2024}.  Since
			\(\bar H_R=H_R-\nabla\Pi_R\) and \(H_R\) satisfies
			\eqref{scaled eulerprofile}, we have
		
		\begin{multline*}
			\frac{1}{R}\,\partial_3 \bar H_R
			+\frac{1}{R}\,\partial_3\nabla\Pi_R
			+\operatorname{div}(\bar H_R\otimes\bar H_R)
			+\operatorname{div}(\nabla\Pi_R\otimes\bar H_R)\\
			+\operatorname{div}(\bar H_R\otimes\nabla\Pi_R)
			+\operatorname{div}(\nabla\Pi_R\otimes\nabla\Pi_R)
			+\nabla P_R=0.
		\end{multline*}
		We use the identity
		\begin{equation*}
			\operatorname{div}(\nabla\Pi_R\otimes\nabla\Pi_R)
			=(\Delta\Pi_R)\nabla\Pi_R
			+\frac12\nabla|\nabla\Pi_R|^2.
		\end{equation*}
		The vector field
			\(g_R:=(\Delta\Pi_R)\nabla\Pi_R\) is supported in
			\(\overline A_R\).  To apply \(\mathcal B_{2R^\alpha}\), we verify
			that it has zero mean.  The identity
		\[
			g_R=\diver\!\left(\nabla\Pi_R\otimes\nabla\Pi_R
			-\frac12|\nabla\Pi_R|^2I\right)
		\]
		and the exterior decay
		\(\nabla\Pi_R=R\nabla\Phi=O(R|x|^{-3})\) show, by integration
		over \(B_L\) and passage to the limit \(L\to\infty\), that
		\[
			\int_{\R^3}g_R\,\dd x=0.
		\]
		Indeed, the boundary flux is \(O(R^2L^{-4})\).  Thus every component
		of \(g_R\) satisfies the compatibility condition for the
		componentwise Bogovski\u{\i} operator.
		
		Equation~\eqref{eqn of prinblock} for the truncated block
			follows upon setting
		\begin{equation*}
			P_1=P_R+\frac1R\partial_3\Pi_R
			+\frac12|\nabla\Pi_R|^2,
		\end{equation*}
		\begin{equation}\label{eqn of F1}
			F_1=-\nabla\Pi_R\otimes\bar H_R
			-\bar H_R\otimes\nabla\Pi_R
			-\mathcal B_{2R^\alpha}g_R.
		\end{equation}
		
		On the exterior of \(B_{2R^\alpha}\), we have
			\(\nabla\Pi_R=H_R\) and \(\Delta\Pi_R=0\).  The potential-flow
			form of \eqref{scaled eulerprofile} therefore implies
			\(\nabla P_1=0\) there.  We fix the additive constant so that
			\(P_1=0\) in the exterior.  The stated support properties of
			\(P_1\) and \(F_1\) now follow from the support-preserving property
			of \(\mathcal B_{2R^\alpha}\).
		
		The support of \(\nabla\Pi_R\otimes\bar H_R\) lies in
			\(\overline A_R\) by Lemma~\ref{lemm 1}.  On this annulus,
			\[
			|\bar H_R|+|\nabla\Pi_R|\le CR^{1-3\alpha},
			\qquad
			|\nabla^2\Pi_R|\le CR^{1-4\alpha}.
			\]
		Since \(|A_R|\le CR^{3\alpha}\), it follows that
		\begin{align*}
			\|\nabla\Pi_R\otimes\bar H_R
			+\bar H_R\otimes\nabla\Pi_R\|_{L^p}
			&\le CR^{2-6\alpha+\frac{3\alpha}{p}},\\
			\|g_R\|_{L^p}
			&\le CR^{2-7\alpha+\frac{3\alpha}{p}}.
		\end{align*}
		The scale-\(R^\alpha\) Bogovski\u{\i} estimate then yields
		\[
			\|\mathcal B_{2R^\alpha}g_R\|_{L^p}
			\le CR^\alpha\|g_R\|_{L^p}
			\le CR^{2-6\alpha+\frac{3\alpha}{p}}.
		\]
		Combining the preceding estimates with
			\eqref{eqn of F1} proves the bound for \(F_1\).
		
		Next, we compute \(\partial_R H_R\).
		\begin{equation*}
			\partial_RH_R
			=-\frac1RH_R-\frac23\frac{x}{R}\cdot\nabla H_R
			=\frac1RH_R-\frac23\diver\!\left(H_R\otimes\frac{x}{R}\right).
		\end{equation*}
		Differentiating the cutoff term as well gives
		\begin{equation}
			\partial_R\bar H_R
			=\frac1R\bar H_R-\nabla(R\Phi\,\partial_R\chi_\alpha)
			-\frac23D_R,
			\qquad
			D_R:=\diver\!\left(H_R\otimes\frac{x}{R}\right).
			\label{eq:R-derivative-bar-H}
		\end{equation}
		In the exterior region, \(H_R=R\nabla\Phi\) is homogeneous of degree
			\(-3\), and therefore \(D_R=0\) for \(|x|>R^{2/3}\).  Thus
			\(\supp D_R\subseteq\overline B_{R^{2/3}}\).  Integrating
			\eqref{eq:R-derivative-bar-H} and using
			\eqref{eq:hill-directional-integral}, we also obtain
		\begin{equation*}
			\int_{\R^3}D_R\,\dd x
				=-\frac32R\,\partial_R\!\left(
				\frac1R\int_{\R^3}\bar H_R\,\dd x\right)=0.
		\end{equation*}
		
		We may therefore set
		\[
			P_2:=-R\Phi\,\partial_R\chi_\alpha,
			\qquad
			F_2:=-\frac23\mathcal B_{R^{2/3}}D_R.
		\]
		This proves \eqref{derivative with R}.  From
			\eqref{eq:hill-pointwise-derivatives},
		\[
			\|D_R\|_{L^p}\le CR^{-2+\frac2p},
			\qquad
			\|D_R\|_{L^\infty}\le CR^{-2},
			\qquad
			\|\nabla D_R\|_
			{L^\infty(\R^3\setminus\partial B_{R^{2/3}})}
			\le CR^{-8/3}.
		\]
		The scale-\(R^{2/3}\) Bogovski\u{\i} estimate gives
		\[
			\|F_2\|_{L^p}
			\le CR^{2/3}\|D_R\|_{L^p}
			\le CR^{\frac2p-\frac43}.
		\]
		Finally,
		\[
			\partial_R\chi_\alpha(x)
			=-\frac{\alpha}{R}
			\left(\frac{x}{R^\alpha}\cdot
			\nabla\chi\!\left(\frac{x}{R^\alpha}\right)\right)
		\]
		is supported in \(\overline A_R\).  The bounds for \(P_2\),
			\(\diver F_2=-\frac23D_R\), and its piecewise gradient now follow
			directly.
	\end{proof}
	\begin{corollary}[Mollified Hill block]\label{cor:mollified-hill}
		Fix \(p\in(1,\infty)\) and a radial function
			\(\rho\in C_c^\infty(B_1)\) such that \(\rho\ge0\) and
			\(\int_{\R^3}\rho\,\dd x=1\).  Set
		\[
			\beta:=\frac{14}{3}+\frac{3\alpha-2}{p}-6\alpha,
			\qquad
			\rho_{R^\beta}(x):=R^{-3\beta}\rho(x/R^\beta),
			\qquad
			\widetilde H_R:=\rho_{R^\beta}\ast\bar H_R.
		\]
		Then \(\beta>2/3\), and there exist
			\(\widetilde P_1,\widetilde P_2,\widetilde F_1,\widetilde F_2\)
			such that
		\begin{align}
			\frac1R\partial_3\widetilde H_R
			+\diver(\widetilde H_R\otimes\widetilde H_R)
			+\nabla\widetilde P_1
			&=\diver\widetilde F_1,
			\label{eq:mollified-traveling}\\
			\partial_R\widetilde H_R
			&=\frac1R\widetilde H_R+\nabla\widetilde P_2
			+\diver\widetilde F_2.
			\label{eq:mollified-R-derivative}
		\end{align}
		Moreover,
		\[
			\|\widetilde F_1\|_{L^p}
			\le CR^{2-6\alpha+\frac{3\alpha}{p}},
			\qquad
			\|\widetilde F_2\|_{L^p}
			\le CR^{\frac2p-\frac43},
		\]
		and
		\[
			\|\nabla\widetilde P_2\|_{L^\infty}
			\le CR^{-3\alpha},
			\qquad
			\|\nabla^2\widetilde P_2\|_{L^\infty}
			\le CR^{-4\alpha},
			\qquad
			\|\diver\widetilde F_2\|_{L^\infty}\le CR^{-2},
		\]
		\begin{equation}
			\|\nabla\diver\widetilde F_2\|_{L^\infty}
			\le CR^{-2-\beta}.
			\label{eq:mollified-F2-derivative}
		\end{equation}
		For every \(1<q\le\infty\), the mollified profile also satisfies
		\[
			\|\widetilde H_R\|_{L^q}\le CR^{\frac2q-1},
			\qquad
			\|\nabla\widetilde H_R\|_{L^q}
			\le CR^{\frac2q-\frac53},
			\qquad
			\|\nabla^2\widetilde H_R\|_{L^q}
			\le CR^{\frac2q-\frac53-\beta},
		\]
		and
		\[
			\|\diver\widetilde H_R\|_{L^q}
			\le CR^{1-4\alpha+\frac{3\alpha}{q}}.
		\]
		At \(q=1\), the corresponding endpoint estimates are
		\[
			\|\widetilde H_R\|_{L^1}
			\le CR(1+|\log R|),
			\qquad
			\|\nabla\widetilde H_R\|_{L^1}\le CR^{1/3},
		\]
		\[
			\|\nabla^2\widetilde H_R\|_{L^1}
			\le CR^{1/3-\beta},
			\qquad
			\|\diver\widetilde H_R\|_{L^1}\le CR^{1-\alpha}.
		\]
		The fields \(\widetilde H_R\), \(\widetilde P_1\),
			\(\widetilde P_2\), \(\widetilde F_1\), and \(\widetilde F_2\)
			are supported in \(\overline B_{3R^\alpha}(0)\), and
		\[
			\frac1R\int_{\R^3}\widetilde H_R\,\dd x=-2\pi e_3.
		\]
	\end{corollary}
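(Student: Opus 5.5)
The plan is to convolve the two identities of the preceding proposition with \(\rho_{R^\beta}\) and then collect the commutator. First, \(\beta>2/3\): since \(\alpha<2/3\) we have \(3\alpha-2<0\), so \(\beta\ge \frac{14}{3}+(3\alpha-2)-6\alpha=\frac83-3\alpha>\frac23\). Hence \(R^\beta\ll R^{2/3}<R^\alpha\). All supports then enlarge from \(\overline B_{2R^\alpha}\) to at most \(\overline B_{2R^\alpha+R^\beta}\subseteq\overline B_{3R^\alpha}\). The mean identity is immediate, because convolution with a probability density preserves integrals, so Lemma~\ref{lemm 1}(4) applies.

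For \eqref{eq:mollified-R-derivative}, note that \(\rho_{R^\beta}\) depends on \(R\). Then \(\partial_R\widetilde H_R=\rho_{R^\beta}\ast\partial_R\bar H_R+(\partial_R\rho_{R^\beta})\ast\bar H_R\). The first term gives \(\frac1R\widetilde H_R+\nabla(\rho_{R^\beta}\ast P_2)+\diver(\rho_{R^\beta}\ast F_2)\). For the second term, write \(\partial_R\rho_{R^\beta}=-\frac{\beta}{R}\diver(x\rho_{R^\beta})\), which is a divergence of a kernel of mass \(O(R^{\beta-1})\). This yields an extra flux \(\frac{\beta}{R}(x\rho_{R^\beta})\ast\bar H_R\) with \(L^p\) norm \(\lesssim R^{\beta-1}\cdot R^{2/p-1}\). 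Since \(\beta>2/3\), this is bounded by \(R^{2/p-4/3}\), and it is absorbed into \(\widetilde F_2\). The \(L^\infty\) bounds on \(\nabla\widetilde P_2\), \(\nabla^2\widetilde P_2\) and \(\diver\widetilde F_2\) pass through convolution, after checking that the extra divergence term is bounded by \(R^{-1}\|\nabla\bar H_R\|_\infty\lesssim R^{-8/3}\cdot R^{\beta}\)-type quantities, which are \(\lesssim R^{-2}\). For \eqref{eq:mollified-F2-derivative}, the derivative of \(-\frac23\rho\ast D_R\) is estimated by placing one derivative on the mollifier, \(\|\nabla\rho_{R^\beta}\|_{L^1}\|D_R\|_\infty\lesssim R^{-\beta-2}\). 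This avoids the interface singularity.

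For \eqref{eq:mollified-traveling}, convolve \eqref{eqn of prinblock} and write
\[
\diver(\widetilde H_R\otimes\widetilde H_R)=\rho_{R^\beta}\ast\diver(\bar H_R\otimes\bar H_R)+\diver\bigl(\widetilde H_R\otimes\widetilde H_R-\rho_{R^\beta}\ast(\bar H_R\otimes\bar H_R)\bigr).
\]
Then \(\widetilde F_1=\rho\ast F_1+\mathcal C\), with \(\mathcal C\) this commutator and \(\widetilde P_1=\rho\ast P_1\). The standard commutator estimate gives \(|\mathcal C|\lesssim R^{2\beta}\|\nabla\bar H_R\|^2_{L^\infty(\mathrm{loc})}\) pointwise. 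This estimate is the main obstacle: it must be sharp enough that \(\|\mathcal C\|_{L^p}\le CR^{2-6\alpha+3\alpha/p}\). Using the Lipschitz bound \(\|\nabla\bar H_R\|_\infty\lesssim R^{-5/3}\) on the core, of volume \(R^2\), gives \(\|\mathcal C\|_{L^p}\lesssim R^{2\beta-10/3+2/p}\). Outside the core, the decay \(R|x|^{-4}\) yields a smaller contribution. Requiring \(2\beta-\frac{10}3+\frac2p\ge 2-6\alpha+\frac{3\alpha}p\) is exactly what the choice of \(\beta\) is tuned for. Indeed, plugging in \(\beta\) gives \(2\beta-\frac{10}{3}+\frac2p=6-12\alpha+\frac{6\alpha-2}{p}\), and comparing with the target reduces to an inequality in \(\alpha,p\) that I expect to hold in the intended range. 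If it is tight, I would use the finer form \(\mathcal C=\int\rho(y)(\delta_y\bar H)\otimes(\delta_y\bar H)\,dy-(\widetilde H-\bar H)\otimes(\widetilde H-\bar H)\) and estimate each difference by \(R^\beta\|\nabla\bar H_R\|\) in \(L^{2p}\).

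Finally, the Lebesgue bounds follow from Young's inequality with \(\|\rho_{R^\beta}\|_{L^1}=1\) applied to Lemma~\ref{lemm 1}. The exception is \(\nabla^2\), where one derivative falls on the mollifier, giving the factor \(R^{-\beta}\). At \(q=1\), integrating \eqref{est barH} gives \(\|\nabla\bar H_R\|_{L^1}\lesssim R^{1/3}\) from the core, plus \(R\log\) from the exterior, and this is \(\lesssim R^{1/3}\).
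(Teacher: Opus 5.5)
Your proposal is correct and follows the paper's overall structure. You mollify both identities of the preceding proposition and put the commutator into \(\widetilde F_1\). You use \(\partial_R\rho_{R^\beta}=-\beta\,\diver K_R\) with \(K_R=\frac{x}{R}\rho_{R^\beta}\) for \(\widetilde F_2\). You place one derivative on the mollifier for the second-order bounds.

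The one genuine difference is the commutator estimate. The paper uses the first-order bound \(\|\mathcal C_R\|_{L^p}\le CR^\beta\|\bar H_R\|_{L^{2p}}\|\nabla\bar H_R\|_{L^{2p}}\le CR^{\beta+\frac2p-\frac83}\). The choice of \(\beta\) makes this exponent exactly \(2-6\alpha+\frac{3\alpha}{p}\). So it is that first-order estimate, not yours, that \(\beta\) is tuned for.

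Your second-order pointwise bound gives \(R^{2\beta+\frac2p-\frac{10}{3}}\), which is better by the factor \(R^{\beta-2/3}\). Hence the inequality you left open (``I expect'') is exactly \(\beta\ge\frac23\), i.e.\ \((2-3\alpha)(2-\frac1p)\ge0\). You had already proved this, so you should say it.

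What each route buys:
\begin{itemize}
\item Your route has slack, so a somewhat smaller \(\beta\) would also work. The price is local-supremum bookkeeping. Note that the exterior region near \(|x|\approx R^{2/3}\) contributes the same order \(R^{2\beta+\frac2p-\frac{10}{3}}\) as the core, not a smaller one; this is still fine.
\item The paper's route needs only the global \(L^{2p}\) bounds of Lemma~\ref{lemm 1}.
\end{itemize}

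Two small slips to fix:
\begin{itemize}
\item The exterior contribution to \(\|\nabla\bar H_R\|_{L^1}\) is \(R\int_{R^{2/3}}^{2R^\alpha}r^{-2}\,\dd r\lesssim R^{1/3}\), not \(R\log\). The logarithm belongs to \(\|\bar H_R\|_{L^1}\), and Lemma~\ref{lemm 1} already gives \(\|\nabla\bar H_R\|_{L^1}\le CR^{1/3}\).
\item For \eqref{eq:mollified-F2-derivative} you must also bound the kernel term. This gives \(\|\nabla\diver(\bar H_R\circledast K_R)\|_{L^\infty}\le\|\nabla K_R\|_{L^1}\|\nabla\bar H_R\|_{L^\infty}\lesssim R^{-8/3}\le R^{-2-\beta}\), again using \(\beta>2/3\).
\end{itemize}
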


	\begin{proof}
		Mollifying \eqref{eqn of prinblock} in space gives
			\eqref{eq:mollified-traveling} with
		\[
			\widetilde P_1:=P_1\ast\rho_{R^\beta}
		\]
		and
		\[
			\widetilde F_1
			:=F_1\ast\rho_{R^\beta}
			+\widetilde H_R\otimes\widetilde H_R
			-(\bar H_R\otimes\bar H_R)\ast\rho_{R^\beta}.
		\]
		Young's convolution inequality gives
			\(\|\rho_{R^\beta}\ast F_1\|_{L^p}
			\le CR^{2-6\alpha+3\alpha/p}\).
		Set
		\[
		\mathcal C_R:=(\bar H_R\otimes\bar H_R)\ast\rho_{R^\beta}
		-(\bar H_R\ast\rho_{R^\beta})\otimes
		(\bar H_R\ast\rho_{R^\beta}).
		\]
		Then
		\begin{align*}
			\|\mathcal C_R\|_{L^p}
			&\le CR^\beta\|\bar H_R\|_{L^{2p}}
			\|\nabla\bar H_R\|_{L^{2p}}\\
			&\le CR^{\beta+\frac{2}{p}-\frac{8}{3}}
			=CR^{2-6\alpha+\frac{3\alpha}{p}}.
		\end{align*}
		Here we used precisely the stated choice of \(\beta\).  Since
			\(3\alpha-2<0\) and \(p>1\),
		\[
			\beta>\frac{14}{3}+(3\alpha-2)-6\alpha
			=\frac83-3\alpha>\frac23.
		\]
		This proves the estimate for \(\widetilde F_1\).

		We next compute the derivative with respect to \(R\).  Since the
			mollification scale depends on \(R\),
		\begin{equation*}
			\partial_R\widetilde H_R
			=(\partial_R\bar H_R)\ast\rho_{R^\beta}
			+\bar H_R\ast\partial_R\rho_{R^\beta}.
		\end{equation*}
		Mollifying \eqref{derivative with R} gives
		\begin{equation*}
			(\partial_R\bar H_R)\ast\rho_{R^\beta}
			=\frac1R\widetilde H_R
			+\nabla(P_2\ast\rho_{R^\beta})
			+\diver(F_2\ast\rho_{R^\beta}).
		\end{equation*}
		For vector fields \(F,G\), define the second-order tensor
			\(F\circledast G\) by
			\((F\circledast G)_{ij}:=F_i\ast G_j\), and set
		\[
			K_R(x):=\frac{x}{R}\rho_{R^\beta}(x).
		\]
		A direct differentiation of the rescaled kernel gives
		\[
			\partial_R\rho_{R^\beta}
			=-\frac{\beta}{R}
			\left(3\rho_{R^\beta}
			+x\cdot\nabla\rho_{R^\beta}\right)
			=-\beta\,\diver K_R.
		\]
		Consequently,
		\[
			\bar H_R\ast\partial_R\rho_{R^\beta}
			=-\beta\,\diver(\bar H_R\circledast K_R).
		\]
		Thus \eqref{eq:mollified-R-derivative} holds with
		\[
			\widetilde P_2:=P_2\ast\rho_{R^\beta},
			\qquad
			\widetilde F_2
			:=F_2\ast\rho_{R^\beta}
			-\beta\,\bar H_R\circledast K_R.
		\]
		The kernel satisfies
			\(\|K_R\|_{L^1}\le CR^{\beta-1}\) and
			\(\|\nabla K_R\|_{L^1}\le CR^{-1}\).  Hence
		\begin{align*}
			\|\bar H_R\circledast K_R\|_{L^p}\le C\|K_R\|_{L^1}\|\bar H_R\|_{L^p}\le CR^{\beta+\frac2p-2} \le CR^{\frac{2}{p}-\frac{4}{3}}.
		\end{align*}
		We used \(\beta>2/3\) in the last inequality.  Similarly,
		\begin{align*}
			\|\diver(\bar H_R\circledast K_R)\|_{L^\infty}
			&\le C\|K_R\|_{L^1}\|\nabla\bar H_R\|_{L^\infty}
			\le CR^{\beta-\frac83}
			\le CR^{-2},\\
			\|\nabla\diver(\bar H_R\circledast K_R)\|_{L^\infty}
			&\le C\|\nabla K_R\|_{L^1}
			\|\nabla\bar H_R\|_{L^\infty}
			\le CR^{-8/3}.
		\end{align*}
		Since \(0<R<1\) and \(\beta>2/3\),
		\[
			R^{-8/3}\le R^{-2-\beta}.
		\]
		Thus this term satisfies the bound asserted in
		\eqref{eq:mollified-F2-derivative}.
		Because \(H\) is only Lipschitz, two derivatives cannot be
			taken uniformly across the spherical interface before
			mollification.  Instead, we place one derivative on the mollifier:
		\[
			\|\nabla\diver(F_2\ast\rho_{R^\beta})\|_{L^\infty}
			\le C\|\diver F_2\|_{L^\infty}
			\|\nabla\rho_{R^\beta}\|_{L^1}
			\le CR^{-2-\beta}.
		\]
		Together with the preceding estimates and Young's inequality, this
			proves all asserted bounds for \(\widetilde P_2\) and
			\(\widetilde F_2\).

		The estimates for \(\widetilde H_R\) follow from
			Lemma~\ref{lemm 1}; for the second derivative, use
		\[
			\|\nabla^2\widetilde H_R\|_{L^q}
			\le \|\nabla\bar H_R\|_{L^q}
			\|\nabla\rho_{R^\beta}\|_{L^1}
			\le CR^{\frac2q-\frac53-\beta}.
		\]
		Convolution preserves the integral.  Finally,
			\(\beta>2/3>\alpha\) implies \(R^\beta<R^\alpha\), so convolution
			enlarges each support by at most \(R^\beta\); this yields the stated
			support in \(\overline B_{3R^\alpha}\).
	\end{proof}
	
	For \(e\in\mathbb S^2\), we use the rotated notation
	\[
		\widetilde H_{R,e}(x)
		:=Q_e\widetilde H_R(Q_e^Tx),
		\qquad
		\widetilde P_{j,R,e}(x)
		:=\widetilde P_j(Q_e^Tx),
	\]
	and
	\[
		\widetilde F_{j,R,e}(x)
		:=Q_e\widetilde F_j(Q_e^Tx)Q_e^T,
		\qquad j\in\{1,2\}.
	\]
	Equations
	\eqref{eq:mollified-traveling}--\eqref{eq:mollified-R-derivative} and all
	the preceding estimates are rotation invariant, while
	\[
		\frac1R\int_{\R^3}\widetilde H_{R,e}\,\dd x=-2\pi e.
	\]

	\section{The moving Hill block}
	
	We now modulate the amplitude and scale of the localized Hill profile
	and translate its center along a time-dependent trajectory.  The velocity
	of the center is chosen to match the intrinsic traveling speed of the
	profile; this cancels the leading transport term against the quadratic
	Euler term.  Scale and amplitude modulation leave a distinguished vector
	source whose spatial mean is prescribed by
	\eqref{eq:hill-directional-integral}.  All remaining terms are placed in a
	symmetric stress.  A Helmholtz correction restores incompressibility, and
	the final corollary incorporates viscosity.
	
	\begin{proposition}[Moving Hill block]\label{main prop}
		Let \(I\subset\R\) be an interval, fix
			\(e\in\mathbb S^2\) and \(p\in(1,\infty)\), and choose
			\[
			\beta=\frac{14}{3}+\frac{3\alpha-2}{p}-6\alpha
			\]
			as in Corollary~\ref{cor:mollified-hill}.  Shrinking \(R_0\) if
			necessary, assume
			\[
				3R_0^\alpha<\frac12.
			\]
			Let
		\[
		\eta\in C_c^\infty(I),
		\qquad
		\mathfrak R\in C^1(\T^3;(0,R_0]).
		\]
		Choose \(t_0\in I\) and \(x_0\in\T^3\), and let \(X\) solve
		\begin{equation}\label{eq:moving-hill-trajectory}
			\begin{cases}
				X'(t)= -\dfrac{\eta(t)}{\mathfrak R(X(t))}e,\\
				X(t_0)= x_0.
			\end{cases}
		\end{equation}
		Set \(R(t):=\mathfrak R(X(t))\).
		
		Define the principal block
		\begin{equation}
			V^p(x,t):=\eta(t)\,\widetilde H_{R(t),e}(x-X(t)).
			\label{eq:V-def}
		\end{equation}
		We suppress the parameters \(R\) and \(e\) on the pressure and error
			terms when no confusion can arise.  The profile identities are
		\begin{equation*}
			\frac{1}{R}\,e\cdot\nabla\widetilde H_{R,e}
			+\diver(\widetilde H_{R,e}\otimes\widetilde H_{R,e})
			+\nabla\widetilde P_1
			=\diver\widetilde F_1,
		\end{equation*}
		\begin{equation*}
			\partial_R\widetilde H_{R,e}
			=\frac{1}{R}\widetilde H_{R,e}
			+\nabla\widetilde P_2+\diver\widetilde F_2.
		\end{equation*}
		The support condition on \(R_0\) allows the compactly supported
			Euclidean profiles to be periodized on \(\T^3=(\R/\Z)^3\) without
			overlap between distinct copies.
		
		Since \(V^p\) is periodic,
		\[
			\int_{\T^3}\diver V^p(x,t)\,\dd x=0,
		\]
		so the periodic inverse Laplacian below is well defined on
		\(\diver V^p\).  To restore incompressibility on \(\T^3\), define
		\begin{equation*}
			V^c:=-\nabla\Delta^{-1}\diver V^p,
			\qquad
			V:=V^p+V^c.
		\end{equation*}
		Then
		\[
			\int_{\T^3}V^c(x,t)\,\dd x=0,
			\qquad
			\diver V(\cdot,t)=0
		\]
		for every $t\in I$, and
		\begin{align}
			\partial_t V+\diver(V\otimes V)+\nabla P
			&= \frac{\dd}{\dd t}(\eta(t)R(t))\frac{1}{R(t)}
				\widetilde H_{R(t),e}(x-X(t))+\diver F,
			\label{eq:euler-defect-form}
		\end{align}
		where \(F\) is a symmetric matrix field.  The following
			estimates hold:
		\begin{enumerate}
			\item The stress satisfies
			\[
			\|F\|_{L_t^\infty L_x^1}
			\le C\|\eta\|_{L^\infty}^2\|R^\kappa\|_{L^\infty(I)}
			(1+\|\nabla\log\mathfrak R\|_{L^\infty}),
			\]
			where
			\[
			\kappa = \min\left\{
			\frac{2+3\alpha}{2p}-4\alpha,
			2-6\alpha+\frac{3\alpha}{p},
			\frac{2}{p}-\frac{4}{3}\right\}.
			\]
			\item For every \(q\in(1,\infty)\), the displayed spatial estimates
			below are understood uniformly in $t\in I$; all powers of the
			variable \(R(t)\) are taken in \(L^\infty(I)\), and all unlabelled
			norms of \(\mathfrak R\) are over \(\T^3\):
			\begin{align*}
				\|V\|_{L_t^\infty L_x^q}
				&\le C\|\eta\|_\infty
				\|R^{\frac2q-1}\|_{L^\infty(I)},\\
				\|\nabla V\|_{L_t^\infty L_x^q}
				&\le C\|\eta\|_\infty
				\|R^{\frac2q-\frac53}\|_{L^\infty(I)},\\
				\|\partial_tV\|_{L_t^\infty L_x^q}
				&\le C\|\eta\|_\infty^2\|R^{-3}\|_{L^\infty(I)}
				(1+\|\nabla\mathfrak R\|_\infty)
				+C\|\eta'\|_\infty\|R^{-1}\|_{L^\infty(I)},\\
				\|\partial_t\nabla V\|_{L_t^\infty L_x^q}
				&\le C\|\eta\|_\infty^2
				\|R^{-3-\beta}\|_{L^\infty(I)}
				(1+\|\nabla\mathfrak R\|_\infty)
				+C\|\eta'\|_\infty
				\|R^{-\frac53}\|_{L^\infty(I)}.
			\end{align*}
			\item For every \(q\in(1,\infty)\),
				\(\|R^{-1}\widetilde H_{R,e}\|_{L^q}
				\le CR^{\frac{2}{q}-2}\), and
			\begin{equation*}
				\int_{\T^3}\frac{1}{R}\widetilde H_{R,e}\,\dd x
					=-2\pi e.
			\end{equation*}
		\end{enumerate}
		In particular,
		\[
			\int_{\T^3}V(x,t)\,\dd x
			=-2\pi\eta(t)R(t)e.
		\]
		Thus the block is not itself mean zero; the distinguished vector
			source in \eqref{eq:euler-defect-form} is exactly the time
			derivative of its spatial mean.
		
	\end{proposition}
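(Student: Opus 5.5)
The plan is to compute \(\partial_tV^p+\diver(V^p\otimes V^p)\) directly from \eqref{eq:V-def}, using the two profile identities of Corollary~\ref{cor:mollified-hill} in their rotated form, and then to absorb the Helmholtz correction \(V^c\) as a perturbation.

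\emph{Time derivative.} By the chain rule,
\[
\partial_tV^p=\eta'\widetilde H_{R,e}(x-X)+\eta R'\,(\partial_R\widetilde H_{R,e})(x-X)-\eta\,(X'\cdot\nabla)\widetilde H_{R,e}(x-X).
\]
Substituting \(X'=-\eta R^{-1}e\) turns the last term into \(\eta^2R^{-1}(e\cdot\nabla)\widetilde H_{R,e}\). By \eqref{eq:mollified-traveling}, this equals \(-\eta^2\diver(\widetilde H\otimes\widetilde H)-\eta^2\nabla\widetilde P_1+\eta^2\diver\widetilde F_1\), and the quadratic part cancels \(\diver(V^p\otimes V^p)\).

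For the middle term, use \eqref{eq:mollified-R-derivative}. It gives \(\eta R'R^{-1}\widetilde H+\eta R'(\nabla\widetilde P_2+\diver\widetilde F_2)\). Together with \(\eta'\widetilde H\), the scalar part is exactly \(\frac{\dd}{\dd t}(\eta R)\,R^{-1}\widetilde H_{R,e}(x-X)\), which is the displayed source.

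The remaining terms are gradients plus divergences: \(\eta^2\diver\widetilde F_1\), \(\eta R'\nabla\widetilde P_2\) and \(\eta R'\diver\widetilde F_2\), with \(R'=\nabla\mathfrak R(X)\cdot X'=-\eta(\nabla\log\mathfrak R)(X)\cdot e\). The pressure is then \(P:=\eta^2\widetilde P_1-\eta R'\widetilde P_2\) plus the contributions coming from \(V^c\).

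\emph{Helmholtz correction.} Since \(V^c\) is a gradient, \(\partial_tV^c\) is a gradient as well, so it goes into \(P\). The cross and quadratic terms \(V^p\otimes V^c+V^c\otimes V^p+V^c\otimes V^c\) are placed directly in \(F\). The non-symmetric tensor \(\widetilde F_2\), together with the Bogovski\u{\i} pieces, is symmetrized with the periodic symmetric inverse divergence \(\mathcal R\). The vector \(\eta R'\diver\widetilde F_2\) has zero mean because it is a divergence, so \(\mathcal R\) applies to it, and \(L^1\)-boundedness then follows by passing through \(L^p\) on the torus.

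To estimate \(V^c\), bound it by Calder\'on--Zygmund on \(\diver V^p\) in \(L^r\). By Corollary~\ref{cor:mollified-hill}, \(\|\diver\widetilde H_R\|_{L^r}\le CR^{1-4\alpha+3\alpha/r}\). More precisely, \(V^c=-\nabla\Delta^{-1}\diver V^p\) has size about \(R^\alpha\) times this, since \(\diver V^p\) has zero mean and lives on the annulus of radius \(R^\alpha\). Pairing \(\|V^c\|_{L^{2p}}\) against \(\|V^p\|_{L^{2p'}}\)-type bounds through H\"older should produce the exponent \(\frac{2+3\alpha}{2p}-4\alpha\) in \(\kappa\). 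The \(\widetilde F_1\) term gives \(R^{2-6\alpha+3\alpha/p}\), and the \(\widetilde F_2\) term gives \(R^{2/p-4/3}\) multiplied by \(|\nabla\log\mathfrak R|\). Converting \(L^p\) to \(L^1\) on \(\T^3\) costs nothing, so Part~1 follows.

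\emph{Part 2.} The bounds follow from Corollary~\ref{cor:mollified-hill} and the boundedness of \(\nabla\Delta^{-1}\diver\) on \(L^q\). For \(\partial_tV\), differentiate: the transport term costs \(|X'|\,\|\nabla\widetilde H\|\lesssim\eta^2R^{-1}R^{2/q-5/3}\), which is at most \(\eta^2R^{-3}\) for \(R<1\). The \(R'\) term is controlled by \(\eta^2R^{-1}|\nabla\mathfrak R|\,\|\partial_R\widetilde H\|\), and \(\|\partial_R\widetilde H\|\) is at most \(R^{-2}\) via \eqref{eq:mollified-R-derivative} and the \(L^\infty\) bounds on \(\nabla\widetilde P_2\) and \(\diver\widetilde F_2\). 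For \(\partial_t\nabla V\), use the \(\nabla^2\widetilde H\) bound, which introduces the extra \(R^{-\beta}\), together with \eqref{eq:mollified-F2-derivative}.

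\emph{Part 3.} This is Corollary~\ref{cor:mollified-hill} after rotation. The condition \(3R_0^\alpha<\frac12\) makes periodization harmless. Finally, \(\int V=\int V^p=-2\pi\eta Re\), because \(V^c\) has zero mean.

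\textbf{Main obstacle.} I expect the hardest step to be the bookkeeping that yields exactly the first exponent in \(\kappa\) from the interaction of \(V^c\) with \(V^p\), which may require a sharper, support-localized bound on \(V^c\). A second delicate point is making sure every non-symmetric or zero-mean vector remainder can be legitimately symmetrized on \(\T^3\).
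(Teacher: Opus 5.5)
Your route is the paper's. It uses the same chain-rule computation and the same cancellation of the transport term against \(\diver(V^p\otimes V^p)\) through \(X'=-\eta R^{-1}e\). The pressure is the same, \(\eta^2\widetilde P_1-\eta R'\widetilde P_2+\partial_t\Delta^{-1}\diver V^p\). So is the stress, \(F=\mathcal R(\diver A)+V^p\otimes V^c+V^c\otimes V^p+V^c\otimes V^c\) with \(A=\eta^2\widetilde F_1+\eta R'\widetilde F_2\). Parts~2 and~3 are also handled as in the paper.

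The one step that does not close as written is the one you flag. If \(L^{2p'}\) means \(L^{2\cdot p'}\), then H\"older only places \(V^c\otimes V^p\) in \(L^2\), and the bound \(\|V^p\|_{L^{2p'}}\lesssim|\eta|R^{-1/p}\) is too weak. Compared with the target \(R^{\frac{2+3\alpha}{2p}-4\alpha}\), you lose a factor \(R^{1-2/p}\), and still \(R^{1+\alpha-2/p}\) after your \(R^\alpha\) localization gain. For \(p\) near \(1\), in particular for \(p_0=28/27\), this is a genuine loss.

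No support-localized refinement of \(V^c\) is needed. The paper pairs \(L^{2p}\) with \(L^{2p}\) into \(L^p\):
\begin{itemize}
\item \(\|V^p\|_{L^{2p}}\le C|\eta|R^{\frac1p-1}\);
\item since \(V^c\) has zero mean, Poincar\'e and Calder\'on--Zygmund give \(\|V^c\|_{L^{2p}}\le C\|\nabla V^c\|_{L^{2p}}\le C\|\diver V^p\|_{L^{2p}}\le C|\eta|R^{1-4\alpha+\frac{3\alpha}{2p}}\).
\end{itemize}
The product is exactly \(R^{\frac{2+3\alpha}{2p}-4\alpha}\). For \(V^c\otimes V^c\), one factor is bounded instead by \(\|V^c\|_{L^{2p}}\le C\|V^p\|_{L^{2p}}\).

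Pairing \(L^{2p}\) with the dual exponent \((2p)'\) directly in \(L^1\) would also work. It even gives the better power \(R^{2-\frac1p-4\alpha+\frac{3\alpha}{2p}}\).
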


	\begin{proof}
		Write \(Y:=x-X(t)\).  Differentiating \eqref{eq:V-def} gives
		\begin{align}
			\partial_t V^p(x,t)
			&=\eta'(t)\widetilde H_{R(t),e}(Y)
			-\eta(t)X'(t)\cdot\nabla\widetilde H_{R(t),e}(Y)
			\notag\\
			&\quad +\eta(t)R'(t)
				\left[\partial_R\widetilde H_{R,e}\right]_{R=R(t)}(Y).
			\label{eq:time-derivative}
		\end{align}
		Moreover,
		\begin{equation*}
			\diver(V^p\otimes V^p)(x,t)
			=\eta(t)^2\,
			\diver\bigl(\widetilde H_{R(t),e}
			\otimes\widetilde H_{R(t),e}\bigr)(Y).
		\end{equation*}
		By Corollary~\ref{cor:mollified-hill},
		\begin{equation*}
			\diver(\widetilde H_{R,e}\otimes\widetilde H_{R,e})
			+\nabla\widetilde P_1
			=-\frac1R(e\cdot\nabla)\widetilde H_{R,e}
			+\diver\widetilde F_1.
		\end{equation*}
		The scale derivative satisfies
		\begin{equation*}
			\partial_R\widetilde H_{R,e}
			=\frac1R\widetilde H_{R,e}
			+\nabla\widetilde P_2+\diver\widetilde F_2.
		\end{equation*}
		Combining these identities with the trajectory equation
			\eqref{eq:moving-hill-trajectory}, the transport term cancels and
			we obtain
		\begin{equation}\label{eq:principal-moving-block}
			\partial_tV^p+\diver(V^p\otimes V^p)+\nabla P^p
			=\frac{\dd}{\dd t}(\eta R)\frac1R
			\widetilde H_{R,e}(Y)+\diver A,
		\end{equation}
		where all profiles on the right are evaluated at \(R=R(t)\), and
		\[
			P^p(x,t):=\eta(t)^2\widetilde P_{1,R(t),e}(Y)
			-\eta(t)R'(t)\widetilde P_{2,R(t),e}(Y),
		\]
		\[
			A(x,t):=\eta(t)^2\widetilde F_{1,R(t),e}(Y)
			+\eta(t)R'(t)\widetilde F_{2,R(t),e}(Y).
		\]
		
		Since
			\(\partial_tV^c=-\nabla\partial_t\Delta^{-1}\diver V^p\),
			this term can be absorbed into the pressure.
			Let \(\mathcal R\) denote the symmetric inverse-divergence operator on
			\(\T^3\) introduced in~\cite{DeLellisSzekelyhidi2013}; we use its
			\(L^p\) and order-\((-1)\) bounds in the form established in
			\cite{BuckmasterDeLellisSzekelyhidiVicol2019,BuckmasterVicol2019}.
			The vector field \(\diver A\) has zero spatial mean, and
			hence
			\[
				\diver\mathcal R(\diver A)=\diver A.
			\]
		Define
		\begin{align}
			P&:=P^p+\partial_t\Delta^{-1}\diver V^p,
			\notag\\
			F&:=\mathcal R(\diver A)
			+V^p\otimes V^c+V^c\otimes V^p+V^c\otimes V^c.
			\label{eq:moving-hill-stress}
		\end{align}
		The tensor \(F\) is symmetric.  Adding the equation for \(V^c\) to
			\eqref{eq:principal-moving-block} now proves
			\eqref{eq:euler-defect-form}.

		It remains to establish the estimates.  The chain rule and
			\eqref{eq:moving-hill-trajectory} give
		\begin{equation}\label{eq:moving-radius-derivative}
			R'(t)
			=\nabla\mathfrak R(X(t))\cdot X'(t)
			=-\eta(t)e\cdot\nabla\log\mathfrak R(X(t)).
		\end{equation}
		
		Set \(F_3:=\mathcal R(\diver A)\).  Since
			\(\mathcal R\diver\) is a Calder\'on--Zygmund operator on
			\(\T^3\)~\cite{LemarieRieusset2002},
			Corollary~\ref{cor:mollified-hill} and
			\eqref{eq:moving-radius-derivative} give
		\begin{align}
			\|F_3\|_{L_t^\infty L_x^1}
			&\le C\|F_3\|_{L_t^\infty L_x^p}\notag\\
			&\le C\|\eta\|_{L^\infty}^2
			\left(
			\|R^{2-6\alpha+\frac{3\alpha}{p}}\|_{L^\infty(I)}
			+\|\nabla\log\mathfrak R\|_{L^\infty}
			\|R^{\frac2p-\frac43}\|_{L^\infty(I)}
			\right).
			\label{eq:moving-F3-estimate}
		\end{align}

		We use two complementary bounds for the Helmholtz correction.  For
			every \(s\in(1,\infty)\), Calder\'on--Zygmund boundedness gives
		\[
			\|V^c\|_{L^s}\le C\|V^p\|_{L^s},
		\]
		while Poincar\'e's inequality and the definition of \(V^c\) give
		\[
			\|V^c\|_{L^s}
			\le C\|\nabla V^c\|_{L^s}
			\le C\|\diver V^p\|_{L^s}.
		\]
		In particular, at \(s=2p\),
		\[
			\|V^p\|_{L^{2p}}
			\le C\|\eta\|_{L^\infty}R^{\frac1p-1},
			\qquad
			\|V^c\|_{L^{2p}}
			\le C\|\eta\|_{L^\infty}
			R^{1-4\alpha+\frac{3\alpha}{2p}}.
		\]
		The first of the two correction bounds also implies
			\(\|V^c\otimes V^c\|_{L^p}
			\le C\|V^p\|_{L^{2p}}\|V^c\|_{L^{2p}}\).  Therefore,
		\begin{equation}
			\|V^p\otimes V^c+V^c\otimes V^p+V^c\otimes V^c\|_{L^p}
			\le C\|\eta\|_{L^\infty}^2
			R^{\frac{2+3\alpha}{2p}-4\alpha}.
			\label{eq:moving-corrector-interaction}
		\end{equation}

		Combining \eqref{eq:moving-F3-estimate},
			\eqref{eq:moving-corrector-interaction}, and
			\eqref{eq:moving-hill-stress}, and using
			\(|\T^3|=1\), yields the asserted stress estimate with
		\[
			\kappa=\min\left\{
			\frac{2+3\alpha}{2p}-4\alpha,\,
			2-6\alpha+\frac{3\alpha}{p},\,
			\frac2p-\frac43
			\right\}.
		\]
		
		For the remaining estimates, fix \(q\in(1,\infty)\).  The
			profile estimates in Corollary~\ref{cor:mollified-hill} give
		\[
			\|V^p\|_{L^q}
			\le C|\eta|R^{\frac2q-1},
			\qquad
			\|\nabla V^p\|_{L^q}
			\le C|\eta|R^{\frac2q-\frac53}.
		\]
		The Calder\'on--Zygmund bounds for the Helmholtz projection give
		\begin{align*}
			\|V^c\|_{L^q}&\le C\|V^p\|_{L^q}, &
			\|\nabla V^c\|_{L^q}&\le C\|\nabla V^p\|_{L^q},\\
			\|\partial_tV^c\|_{L^q}&\le C\|\partial_tV^p\|_{L^q},&
			\|\partial_t\nabla V^c\|_{L^q}
			&\le C\|\partial_t\nabla V^p\|_{L^q}.
		\end{align*}
		It remains to estimate the two time derivatives of \(V^p\).
			Equation~\eqref{eq:time-derivative} and
			Corollary~\ref{cor:mollified-hill} yield, at each fixed \(t\),
		\begin{align*}
			\|\partial_tV^p\|_{L^\infty}
			&\le \frac{|\eta|^2}{R}
			\|\nabla\widetilde H_{R,e}\|_{L^\infty}
			+|\eta'|\|\widetilde H_{R,e}\|_{L^\infty}\\
			&\quad+|\eta R'|\left(
			\frac1R\|\widetilde H_{R,e}\|_{L^\infty}
			+\|\nabla\widetilde P_2\|_{L^\infty}
			+\|\diver\widetilde F_2\|_{L^\infty}\right)\\
			&\le C\left(
			|\eta|^2R^{-\frac83}
			+|\eta'|R^{-1}
			+|\eta R'|R^{-2}\right),
		\end{align*}
	and
		\begin{align*}
			\|\partial_t\nabla V^p\|_{L^\infty}
			&\le \frac{|\eta|^2}{R}
			\|\nabla^2\widetilde H_{R,e}\|_{L^\infty}
			+|\eta'|\|\nabla\widetilde H_{R,e}\|_{L^\infty}\\
			&\quad+|\eta R'|\left(
			\frac1R\|\nabla\widetilde H_{R,e}\|_{L^\infty}
			+\|\nabla^2\widetilde P_2\|_{L^\infty}
			+\|\nabla\diver\widetilde F_2\|_{L^\infty}\right)\\
			&\le C\left(
			|\eta|^2R^{-\frac83-\beta}
			+|\eta'|R^{-\frac53}
			+|\eta R'|R^{-2-\beta}\right).
		\end{align*}
		In the last two estimates we used \(\alpha<2/3\) and
			\(\beta>2/3\) to dominate the pressure terms by the displayed
			powers of \(R\).
		Using \eqref{eq:moving-radius-derivative} in the form
			\[
				|R'(t)|
				\le |\eta(t)|R(t)^{-1}
				\|\nabla\mathfrak R\|_{L^\infty},
			\]
		together with \(R\le R_0<1\), gives the asserted time-derivative
			bounds.  The \(L^q\)-estimates follow from the \(L^\infty\)-bounds
			because \(|\T^3|=1\).
		
		The final assertion follows from Corollary~\ref{cor:mollified-hill}
			and is unchanged by periodization.
	\end{proof}
	
	\begin{corollary}[Navier--Stokes moving Hill block]
	\label{cor:navier-stokes-moving-hill}
		Under the assumptions of Proposition~\ref{main prop}, the
			divergence-free block \(V\) satisfies
		\begin{equation}
			\partial_t V+\diver(V\otimes V)-\nu \Delta V+\nabla P
			= \frac{\dd}{\dd t}(\eta(t)R(t))\frac{1}{R(t)}
				\widetilde H_{R(t),e}(x-X(t))+\diver F^{NS},
			\label{eq:NS-block}
		\end{equation}
		where \(F^{NS}\) is symmetric and satisfies
		\begin{equation*}
			\|F^{NS}\|_{L_t^\infty L_x^1}
				\le C\|\eta\|_{L^\infty}^2
				\|R^\kappa\|_{L^\infty(I)}
				(1+\|\nabla\log\mathfrak R\|_{L^\infty})
				+C\nu\|\eta\|_{L^\infty}
				\|R^{\frac2p-\frac53}\|_{L^\infty(I)}.
		\end{equation*}
	\end{corollary}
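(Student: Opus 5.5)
The proof adds the viscous term to the Euler defect identity \eqref{eq:euler-defect-form} of Proposition~\ref{main prop} and places it in the symmetric stress. Since \(\diver V=0\), we have
\[
	-\nu\Delta V=\diver\!\left[-\nu\bigl(\nabla V+(\nabla V)^T\bigr)\right],
\]
because \(\diver\bigl((\nabla V)^T\bigr)=\nabla(\diver V)=0\) and \(\diver(\nabla V)=\Delta V\). We define
\[
	F^{NS}:=F-\nu\bigl(\nabla V+(\nabla V)^T\bigr).
\]
This tensor is symmetric because \(F\) is symmetric by Proposition~\ref{main prop}. The pressure \(P\) and the source term \(\frac{\dd}{\dd t}(\eta R)R^{-1}\widetilde H_{R,e}(x-X(t))\) are unchanged, so \eqref{eq:NS-block} follows directly from \eqref{eq:euler-defect-form}.

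For the estimate, the bound on \(\|F\|_{L^\infty_tL^1_x}\) is taken verbatim from Part~1 of Proposition~\ref{main prop}. For the viscous part, we use \(|\T^3|=1\) and Hölder to get
\[
	\|\nabla V\|_{L^1}\le\|\nabla V\|_{L^p}
\]
for the fixed \(p\in(1,\infty)\) of the proposition. Part~2 of Proposition~\ref{main prop} with \(q=p\) then gives
\[
	\|\nabla V\|_{L^\infty_tL^p_x}\le C\|\eta\|_\infty\|R^{\frac2p-\frac53}\|_{L^\infty(I)}.
\]
Concretely, this rests on two facts. First, Corollary~\ref{cor:mollified-hill} gives \(\|\nabla\widetilde H_{R}\|_{L^p}\le CR^{2/p-5/3}\), which is unchanged by rotation, translation and periodization; periodization is harmless since \(3R_0^\alpha<1/2\). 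Second, \(\|\nabla V^c\|_{L^p}\le C\|\nabla V^p\|_{L^p}\) by Calderón--Zygmund boundedness of \(\nabla\Delta^{-1}\diver\) on \(\T^3\) for \(1<p<\infty\).

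Summing the two contributions gives the stated bound. The only point needing care is that we must avoid estimating \(\nabla V^c\) directly in \(L^1\), where the Helmholtz projection is unbounded. Passing to \(L^p\) with \(p>1\) first resolves this, so no genuine obstacle remains.
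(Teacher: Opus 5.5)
Your proposal is correct and matches the paper's proof essentially line for line. The paper likewise uses $\diver V=0$ to write $-\nu\Delta V=\diver\bigl[-\nu(\nabla V+(\nabla V)^T)\bigr]$, sets $F^{NS}:=F-\nu(\nabla V+(\nabla V)^T)$, and bounds the viscous part in $L^1_x$ by $C\nu\|\nabla V\|_{L^\infty_tL^p_x}$ using Part~2 of Proposition~\ref{main prop}; your remarks on periodization and on working in $L^p$ with $p>1$ (so that the Calder\'on--Zygmund bound for the Helmholtz correction applies) are consistent with how that Part~2 estimate is obtained.
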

	\begin{proof}
		Since \(\diver V=0\),
		\[
			-\nu\Delta V
			=\diver\!\left[-\nu\bigl(\nabla V+(\nabla V)^T\bigr)\right].
		\]
		Thus, treating viscosity as part of the stress gives
		\begin{align*}
			\partial_t V+\diver(V\otimes V)-\nu \Delta V+\nabla P
			&= \frac{\dd}{\dd t}(\eta(t)R(t))\frac{1}{R(t)}
			\widetilde H_{R(t),e}(x-X(t))\\
			&\quad+\diver\!\left(F-\nu(\nabla V+(\nabla V)^T)\right).
		\end{align*}
		
		Thus \eqref{eq:NS-block} follows after setting
			\[
				F^{NS}:=F-\nu(\nabla V+(\nabla V)^T).
			\]
		This tensor is symmetric.  By
			Proposition~\ref{main prop},
		\[
			\|\nu(\nabla V+(\nabla V)^T)\|_{L_t^\infty L_x^1}
			\le C\nu\|\nabla V\|_{L_t^\infty L_x^p}
			\le C\nu\|\eta\|_{L^\infty}
			\|R^{\frac2p-\frac53}\|_{L^\infty(I)}.
		\]
	\end{proof}

	\section{The iteration step}

	We carry out one stage of the Navier--Stokes--Reynolds iteration.
	Starting from an admissible triple
	$(u_q,p_q,\E_q)$, we regularize the background flow, decompose the
	mollified stress into rank-one components, and assign each component to
	a moving Hill block on a short time interval.  An auxiliary source and
	a temporal corrector then convert the orbit-averaged cancellation into
	a pointwise-in-time equation.  Here \,admissible\, means that the velocity
	is continuous in time and the triple is smooth in the interior of each
	time cell, with bounded one-sided derivatives at the cell interfaces;
	the Navier--Stokes--Reynolds system is understood distributionally.
	This class is preserved by the construction below and becomes smooth
	after the mollification at the beginning of the next stage.
	
	To avoid boundary artifacts in the space--time convolution, all triples
	are constructed on a fixed open interval \(I_\ast\supset[0,1]\), and the
	iteration is restricted to \([0,1]\) only after mollification.  All
	time-dependent estimates below are understood on \(I_\ast\); this harmless
	convention will not be repeated.

	We begin by fixing the parameter hierarchy.  At the $q$th stage,
	$\lambda_q$ controls the density of the rational orbits, $\delta_q$
	measures the Reynolds stress, $r_q$ is the base concentration scale,
	and $\tau_q$ is the length of a time cell.  For
	$q\in\mathbb Z_{\geq0}$, set
	\begin{equation*}
		\lambda_{q+1} = \lambda_q^{\sigma}, \quad \delta_q = \lambda_1^{2\gamma}\lambda_q^{-\gamma}, \quad r_{q+1}= \lambda_{q+1}^{-\mu}, \quad \tau_{q+1}= \lambda_{q+1}^{-\eta}.
	\end{equation*}
	The constants $\mu,\eta,\gamma,\sigma$ are positive, while $\lambda_0$
	depends on the initial data and will be chosen sufficiently large.  We
	also take $\lambda_0$, $\sigma$, and $\eta$ to be integers.
	For the intrinsic moving-block estimates we fix, throughout this
	section,
	\begin{equation}\label{eq:intrinsic-exponents}
		p_0=\frac{28}{27},\qquad
		\alpha=\frac{2}{11},\qquad
		\beta=\frac{502}{231},\qquad
		\kappa=\frac12.
	\end{equation}
	The numerical choices of the remaining parameters, and the verification
	of all exponent inequalities, are given in the parameter-selection
	subsection \ref{sec:choose para} below.
	
	Suppose that the Navier--Stokes--Reynolds system at the $q$th stage is
	\begin{align}\label{eq:NSR}
		\partial_t u_q+\diver(u_q\otimes u_q)-\nu \Delta u_q+\nabla p_q
		&= \diver(\mathcal{E}_q),\\
		\diver u_q&=0\nonumber.
	\end{align}
	The triple is admissible on \(\T^3\times I_\ast\), and the induction
	assumptions are
	\begin{equation}\label{induction hypo}
		\|\mathcal{E}_q\|_{L_t^\infty L_x^1}\le \delta_{q+1},\quad
		\|u_q\|_{L_t^\infty L_x^2}
		\le 2\delta_0^{\frac12}-\delta_q^{\frac12},\quad
		\|(\nabla_xu_q,\partial_tu_q)\|_{L_t^\infty L_x^4}
		\le \lambda_q^n.
	\end{equation}

	Here \(n>0\) is fixed in the parameter selection below.
	\begin{proposition}\label{iteration}
		Let \(\bar p=\frac65+5\times10^{-5}\).  Choose the
			constants \(\mu,\eta,\gamma,\sigma\) as in the parameter selection
			below, and let \(\lambda_0\) be sufficiently large.  If
			\((u_q,p_q,\E_q)\) is admissible and satisfies
			\eqref{eq:NSR} and \eqref{induction hypo}, then there exists an
			admissible triple
			\((u_{q+1},p_{q+1},\E_{q+1})\) satisfying \eqref{eq:NSR} and the
			following bounds:
		\begin{enumerate}
			\item \(\|\mathcal{E}_{q+1}\|_{L_t^\infty L_x^1}\le
			\delta_{q+2}\),
			\(\|u_{q+1}\|_{L_t^\infty L_x^2}\le
			2\delta_0^{1/2}-\delta_{q+1}^{1/2}\), and
			\(\|(\nabla_xu_{q+1},\partial_tu_{q+1})\|_{L_t^\infty L_x^4}
			\le\lambda_{q+1}^n\),
			\item \(\|u_{q+1}-u_q\|_{L_t^\infty L_x^2}
			\le C\delta_{q+1}^{1/2}\),
			\item There exist \(\alpha_0,\vartheta>0\), independent of \(q\),
			such that
			\[
			\|\nabla u_{q+1}\|_{C_t^{\alpha_0}L_x^{\bar p}}
			\le
			\|\nabla u_q\|_{C_t^{\alpha_0}L_x^{\bar p}}
			+C\delta_{q+1}^{\vartheta},
			\]
			\item \(\|u_{q+1}(\cdot,0)-u_q(\cdot,0)\|_{L^2}
			\le C\lambda_q^{-1/5}\) and
			\(\|u_{q+1}(\cdot,1)-u_q(\cdot,1)\|_{L^2}
			\le C\lambda_q^{-1/5}\).
		\end{enumerate}
	\end{proposition}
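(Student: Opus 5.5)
We construct \(u_{q+1}=u_\ell+w_{q+1}\), where \(u_\ell\) is a space--time mollification of \(u_q\) at a scale \(\ell=\lambda_q^{-m}\) (with \(m\) small compared with \(\sigma\)). The increment is \(w_{q+1}=\sum_{k}\sum_{i=1}^9 V_i^k+Q_{q+1}\), with one moving Hill block (Proposition~\ref{main prop}, Corollary~\ref{cor:navier-stokes-moving-hill}) for each time cell \(\mathcal T^k\) of length \(\tau_{q+1}\) and each direction \(\xi_i\).

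\textbf{Decomposition and blocks.} First we add a large multiple of the identity to \(-\E_\ell\), absorbing the trace into the pressure. The geometric lemma then gives \(-\diver\E_\ell=\diver\sum_i a_i\,\xi_i\otimes\xi_i+\nabla P^d\) with \(a_i\gtrsim\delta_{q+1}\) and \(\|a_i\|_{C^N}\lesssim\delta_{q+1}\ell^{-N-3}\). The rational vectors \(\xi_i\) have period bounded independently of \(q\), and \(\lambda_{q+1}\xi_i\) is chosen so that the orbit closes at scale \(\lambda_{q+1}^{-1}\). On the \(i\)-th subinterval of each cell we freeze \(a_i^k=a_i(\cdot,t_k)\) and set \(\mathfrak R=r_{q+1}a_i^k\). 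We take \(\eta=A_0\zeta_i^k\) with \(A_0^2=\frac{9}{2\pi}\int a_i^k\), the orientation \(e=-\xi_i\), and a starting point placed so that the support trails are disjoint (\(2R^\alpha\le\lambda_{q+1}^{-1}\)). Corollary~\ref{cor:navier-stokes-moving-hill} then produces, on each subinterval, the source \(\frac{\dd}{\dd t}(\eta R)R^{-1}\widetilde H_{R,-\xi}(x-X)\) plus a stress bounded by \(\delta_{q+1}(r_{q+1}\delta_{q+1})^{\kappa}\) with \(\kappa=1/2\), plus a viscous term bounded by \(\delta_{q+1}^{1/2}(r_{q+1}\delta_{q+1})^{2/p_0-5/3}\). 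Since \(\zeta\) is constant on \(\bar{\mathcal T}_i^k\), the source splits into two parts. The cutoff part \(\zeta' A_0 R\cdots\) is small because \(R\sim r_{q+1}\delta_{q+1}\). The main part is \(A_0\zeta\,R'R^{-1}\widetilde H\). Using \(R'=-\eta\,e\cdot\nabla\log\mathfrak R\), this main part equals \(A_0^2\zeta^2\,(\xi\cdot\nabla a)/a\) times a profile of mass \(2\pi\xi\) (up to sign).

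\textbf{Auxiliary source and orbit averaging.} We replace \(R^{-1}\widetilde H_{R,e}\) by a fixed smooth density \(\widetilde U\) at scale \(\lambda_{q+1}^{-1}\) carrying the same mass. The difference has zero mean and is compactly supported, so a localized antidivergence (Bogovski\u{\i}, estimated in \(L^{p_s}\) with \(p_s\) near \(1\)) gains the factor \(\lambda_{q+1}^{-1}\) against \(R^{2/p_s-2}\). Next, the time integral of the auxiliary source along the trajectory is computed by the change of variables \(\dd t=\mathfrak R(X)\,\dd s/A_0\) along a closed orbit. This produces \eqref{eq:intro-orbit-average}: after averaging, \(\frac1\tau\int U\,\dd t=\diver(a_i^k\xi_i\otimes\xi_i)+\diver G\), with \(\|G\|_{L^1}\lesssim\|a\|_{C^1}/\lambda_{q+1}\) plus the errors from incomplete periods. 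The \(A_0^2\) normalization is exactly what makes the coefficient equal \(a_i^k\). The number of periods per subinterval is \(\sim\tau A_0/(\lambda_{q+1}^{-1}\cdot r\delta)\), and requiring it to be large is the condition \(3-\mu+\gamma/2<-\eta\). The temporal corrector \(Q_{q+1}\) defined in the introduction cancels the zero-average remainder. Its size gains \(\tau_{q+1}\) against \(\|U\|\), and it vanishes at cell endpoints.

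\textbf{New stress and estimates.} \(\E_{q+1}\) collects the following terms:
\begin{itemize}
\item the mollification commutator \(u_q\otimes u_q-(u_q\otimes u_q)_\ell\) (using the \(\lambda_q^n\) bound);
\item the freezing error \(\tau\|\partial_t a\|\);
\item \(G\) and the source-replacement error;
\item the block stresses and viscous stresses;
\item \(\mathcal R\) of \(\partial_t Q\) residuals, plus \(Q\otimes Q\), \(Q\otimes V\), and \(\nu\nabla Q\);
\item the transport and linear terms \(u_\ell\otimes w+w\otimes u_\ell\), bounded by \(\delta_{q+1}^{1/2}r_{q+1}^{1/3}\lambda_{q+1}^{2n/\sigma+\gamma}\) through the \(L^1\)–\(L^\infty\) pairing of Corollary~\ref{cor:mollified-hill}.
\end{itemize}
Cross terms between different \(i\) vanish by disjoint time support, and cross terms between different \(k\) vanish by disjoint cell support. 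Each contribution is then checked to be \(\le\delta_{q+2}/10\) under the parameter choices \(\sigma=200\), \(n=20\), \(\gamma=10^{-3}\), \(\mu=6.002\), \(\eta=3\).

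The remaining items follow from the block bounds. The \(L^2\) bound and item~2 follow from \(\|V\|_{L^2}\lesssim A_0\lesssim\delta_{q+1}^{1/2}\); with a small constant in the energy step this yields \(2\delta_0^{1/2}-\delta_{q+1}^{1/2}\). The \(L^4\) gradient and time-derivative bounds \(\le\lambda_{q+1}^n\) follow from the \(\partial_t\nabla V\) estimates of Proposition~\ref{main prop} with \(R\ge r_{q+1}\delta_{q+1}\ell^{3}\). For item~3 we interpolate the \(L^{\bar p}\) bound \(\delta_{q+1}^{1/2}(r\delta)^{-\kappa_\ast}\) with the \(C^1_tL^{\bar p}\) bound, which grows like a power of \(\lambda_{q+1}\). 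We also use that the mollified \(u_\ell\) does not increase the \(C^{\alpha_0}_t\) norm. With \(\alpha_0\) tiny this gives \(C\delta_{q+1}^\vartheta\). Item~4 holds because all blocks and \(Q\) vanish at \(t=0,1\), so only \(u_\ell-u_q\) contributes, and that difference is \(\lesssim\ell\lambda_q^n\le\lambda_q^{-1/5}\).

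\textbf{Main obstacle.} The hardest part is the orbit-averaging identity with space-dependent radius. It must track incomplete periods, the variation of \(a\) across orbit spacing \(\lambda_{q+1}^{-1}\), and the nonlinearity of the ODE, while keeping the corrector \(Q\) small in \(C_tL^2\) and in \(\nabla Q\in L^{\bar p}\). This is where the tight inequalities \(3-\mu+\gamma/2<-\eta\) and \(-\eta+4-3/\bar p+4n/\sigma+4\gamma<0\) enter. I would first prove the identity for constant \(a\) and then handle the variation of \(a\) by Taylor expansion along each period.
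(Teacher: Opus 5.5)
Your outline follows the paper's architecture: mollification, positive rank-one decomposition, one moving Hill block per direction and subinterval, an auxiliary source, and a temporal corrector. However, the orbit geometry is wrong, and with it the averaging identity \eqref{eq:intro-orbit-average} fails.

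\textbf{The orbit must be long.} Since $\widetilde U_i^k$ is supported in $B_{C\lambda_{q+1}^{-1}}$, the averaged density $\tau_{q+1}^{-1}\int(\eta\zeta)^2\widetilde U(x-X(t))\,\dd t$ vanishes outside the $C\lambda_{q+1}^{-1}$-neighbourhood of the orbit. But it must equal $a_i^k(x)\ge\delta_{q+1}$, up to a small error, at \emph{every} $x\in\T^3$. In three dimensions that tube has volume $\lesssim L\lambda_{q+1}^{-2}$ for an orbit of length $L$, so one needs $L\gtrsim\lambda_{q+1}^2$. Rational directions with period bounded in $q$ give $L=O(1)$. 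Passing to $\lambda_{q+1}\xi_i$ only reparametrizes the same closed geodesic.

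This is the content of Lemma~\ref{lemma:error dec}(i). The directions there are $\xi_i=-m_i/|m_i|$ with $m_i=(N^2I+NA+B)v_i$ and $N=\lambda_{q+1}$, and they have the following properties:
\begin{itemize}
\item they depend on $q$ and are $O(N^{-1})$-close, up to sign, to the nine standard directions, so the positive decomposition survives;
\item their primitive vectors have length $c_i\lambda_{q+1}^2$;
\item they are $C_0\lambda_{q+1}^{-1}$-dense, because $\{n\in\Z^3:n\cdot m_i=0\}$ has no nonzero vector shorter than $N/4$, combined with two-dimensional transference.
\end{itemize}
The long orbit is also what produces \eqref{traj: ineq for cutoff}. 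The period is $T_i^k=2\pi c_i\lambda_{q+1}^2r_{q+1}\eta_i^k/9$. One needs $T_i^k\le\tau_{q+1}/(10\lambda_{q+1})$ so that incomplete periods cost only $O(\tau_{q+1}/\lambda_{q+1})$, which gives $\lambda_{q+1}^3r_{q+1}\delta_{q+1}^{1/2}\lesssim\tau_{q+1}$. Your period count $\tau A_0/(\lambda_{q+1}^{-1}r\delta)$ corresponds to an orbit of length $\lambda_{q+1}^{-1}$. It does not yield the condition $3-\mu+\gamma/2<-\eta$ that you quote.

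\textbf{Further problems, even granting dense orbits.}
\begin{itemize}
\item A fixed bump of mass $2\pi$ has a full-orbit average that is constant along $\xi_i$ but oscillates with $O(1)$ relative amplitude across neighbouring strands. This leaves an $O(\delta_{q+1})$ error tensor. The paper divides by that average, $\widetilde U_i^k=2\pi\varphi_{\lambda_{q+1}}/\Phi_{\lambda_{q+1}}$, so that \eqref{eqn:tildeu2} holds exactly.
\item The starting point is chosen so that the orbit average of $a_i^k$ equals $\int_{\T^3}a_i^k$. This, not disjointness of trails, is what makes $(\eta_i^k)^2=\frac9{2\pi}\int a_i^k$ produce the coefficient $a_i^k(x)$, up to an $O(\tau_{q+1}/\lambda_{q+1})$ incomplete-period error.
\item The cutoff part $\eta\zeta'\widetilde H_{R,-\xi_i}$ cannot simply be placed in the stress as a small error. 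It has spatial mean $2\pi\eta\zeta'R\,\xi_i\neq0$, and only the full source $\frac{\dd}{\dd t}(\eta\zeta R)$ has vanishing cell average. The paper integrates by parts in $t$ before averaging, so the whole cell average is a divergence, and the ramps and the incomplete period enter $G_i^k$. It also checks that the mean of $Q_{q+1}$ cancels that of $v_{q+1}$, keeping $u_{q+1}$ mean-zero; you do not address this.
\item The multiple of the identity must be the pointwise $32(|\E_l|^2+\delta_{q+1}^2)^{1/2}$. A constant multiple $\gtrsim\|\E_l\|_{C^0}$ would make $\int a_i\gg\delta_{q+1}$ and destroy $A_0\lesssim\delta_{q+1}^{1/2}$.
\item The block and viscous stresses carry positive powers $\kappa=1/2$ and $2/p_0-5/3=11/42$ of $R$. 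They must therefore be bounded using $\sup R\le r_{q+1}\|a_i^k\|_{C^0}$ together with the factor $1+\|\nabla\log\mathfrak R_i^k\|_{L^\infty}$, not using $r_{q+1}\delta_{q+1}$. The stated parameters still close with the correct bound.
\end{itemize}
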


	\subsection{Mollification step}
	We regularize the background flow and Reynolds stress at a
	space--time scale $l$.  This provides the derivative bounds needed to
	define the space-dependent radii and trajectories of the vortex cores.
	The commutator generated by mollifying the nonlinear term is included
	in the new Reynolds stress.  Fix a sufficiently small absolute constant
	$c_0>0$ and set
	\begin{equation*}
		l:=c_0\lambda_{q+1}^{-n/\sigma-\gamma}.
	\end{equation*}
	Let
	\(\rho_l(t,x)=l^{-4}\rho(t/l,x/l)\) be a smooth space--time mollifier
	on \(\mathbb R\times\mathbb T^3\), where \(\rho\) is supported in the
	unit space--time ball, and
	set
	\begin{equation}\label{mollified eqn}
		u_l = \rho_l\ast u_q,\quad
			p_l = \rho_l\ast p_q,\quad
			\E_l = \rho_l\ast \E_q
			+u_l\otimes u_l-(u_q\otimes u_q)\ast\rho_l.
	\end{equation}

	Then \((u_l,p_l,\E_l)\) solves \eqref{eq:NSR} and satisfies
	\begin{align*}
		\|\E_l\|_{L_t^\infty L_x^1}
		&\le \|\rho_{l}\ast \E_q\|_{L_t^\infty L_x^1}
		+\|u_l\otimes u_l-(u_q\otimes u_q)\ast \rho_l\|_{L_t^\infty L_x^1}\nonumber\\
		&\le \delta_{q+1}+ C l\|u_q\|_{L_t^\infty L_x^2}
		\| (\nabla_xu_q,\partial_tu_q)\|_{L_t^\infty L_x^2}\nonumber\\
		&\le  \delta_{q+1}
		+C c_0\delta_0^\frac{1}{2}\lambda_{q+1}^{-\gamma}\nonumber\\
		&\le 2\delta_{q+1}.
	\end{align*}
	Here the last inequality follows from
	$\delta_0^{1/2}\le\lambda_1^{2\gamma}$ and the definition of
	$\delta_{q+1}$.  This verifies that the smaller scale still controls the
	mollification commutator.

	We also check the inverse powers of $l$.  At $q=0$ the initial triple is
	fixed and smooth before $\lambda_0$ is chosen, so the following bounds hold
	after increasing $\lambda_0$.  For $q\ge1$, we use
	$\lambda_{q+1}\ge\lambda_2=\lambda_1^\sigma$ and
	$W^{1,4}(\T^3)\hookrightarrow L^\infty(\T^3)$ to obtain
	\begin{align*}
		\|\E_l\|_{C^0_{x,t}}
		&\le C l^{-3}\|\E_q\|_{L_t^\infty L_x^1}
		+C\lambda_q^{2n}\nonumber\\
		&\le C\lambda_1^{2\gamma}
		\lambda_{q+1}^{3n/\sigma+2\gamma}
		+C\lambda_{q+1}^{2n/\sigma}
		\le C\lambda_{q+1}^{3n/\sigma+3\gamma},\\
		\|\E_l\|_{\dot C^1_{x,t}}
		&\le C l^{-4}\|\E_q\|_{L_t^\infty L_x^1}
		+C l^{-1}\lambda_q^{2n}\nonumber\\
		&\le C\lambda_1^{2\gamma}
		\lambda_{q+1}^{4n/\sigma+3\gamma}
		+C\lambda_{q+1}^{3n/\sigma+\gamma}
		\le C\lambda_{q+1}^{4n/\sigma+4\gamma}.
	\end{align*}
	In the last inequalities we used
	$\lambda_1^{2\gamma}\le\lambda_{q+1}^{2\gamma/\sigma}$.
	Thus all subsequent bounds involving powers of $l$ retain the exponents
	used below.

	\subsection{Error decomposition}
	
	The decomposition of a symmetric matrix into nine positive rank-one
	components is the geometric lemma of
	\cite{Nash1954,DeLellisSzekelyhidi2013,
	BuckmasterDeLellisSzekelyhidiVicol2019}.  The refinement producing
	directions with rational component ratios and orbit periods comparable to
	\(\lambda_{q+1}^2\) follows~\cite{BrueColomboKumar2024}; the perturbation
	matrix \(K\) and the density argument in item~\textup{(i)} below are
	specific to the present three-dimensional setting.
	
	We resolve the mollified Reynolds stress, up to a scalar pressure,
	into finitely many positive rank-one components.  The directions are
	chosen to have rational components, periods comparable to
	$\lambda_{q+1}^2$, and \(O(\lambda_{q+1}^{-1})\)-dense trajectories;
	they will later determine the trajectories of the moving vortex cores.
	The positive coefficients determine both the amplitudes and the spatial
	scales of the corresponding perturbations.
	
	\begin{lemma}[{Error Decomposition}]\label{lemma:error dec}
		Given $\lambda_{q+1}\geq192$ and $\delta_{q+1}>0$, there exist
		unit vectors $\xi_i\in\R^3$, $i=1,\dots,9$, such that,
		for every
		\(\E_l\in
		C^\infty(\mathbb T^3\times I_\ast;\operatorname{Sym}_3)\),
		the
		following statements hold.
		
		\begin{enumerate}[label = (\roman*)]
			\item The curve $s\mapsto s\xi_i$ on $\mathbb{T}^3$ has minimal period
			$c_i\lambda_{q+1}^2$ for some $c_i\in[1/4,2]$, and its image is
			\(C_0\lambda_{q+1}^{-1}\)-dense in \(\mathbb T^3\), where
			\(C_0\) is independent of \(q\) and \(i\).
			\item  The following decomposition holds:
			\begin{align}\label{eq:error dec}
				-\diver(\E_l)
					=\diver\left(\sum_{i=1}^9a_i(x,t)\xi_i\otimes\xi_i\right)
					-\nabla\zeta.
			\end{align}
			
			\item The functions $a_i(x, t)$ are smooth and satisfy
			\begin{align}\label{eq:est_a}
				a_i(x,t)&\ge \delta_{q+1},\\
				\|a_i\|_{L_t^\infty L_x^1}&\le192\delta_{q+1},\notag\\
				\|a_i\|_{C^0_{x,t}}
				&\le C\bigl(\delta_{q+1}+\|\E_l\|_{C^0_{x,t}}\bigr),\notag\\
				\|a_i\|_{C^1_{x,t}}
				&\le C\bigl(\delta_{q+1}
					+\|\E_l\|_{C^1_{x,t}}\bigr).\notag
			\end{align}
			
		\end{enumerate}
	\end{lemma}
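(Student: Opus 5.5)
The plan has three parts: build the nine directions from a fixed base family by a rational perturbation, expand the stress in the resulting basis with a large identity shift to force positivity, and absorb the shift into the pressure.

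\emph{Base directions.} I start from a fixed base family of nine unit vectors $\bar\xi_1,\dots,\bar\xi_9$ with two properties. First, the six matrices $\bar\xi_i\otimes\bar\xi_i$, $i\le 6$, form a basis of $\operatorname{Sym}_3$. Second, $\sum_{i=1}^9 \bar\xi_i\otimes\bar\xi_i$ is a positive multiple of the identity, so that it can be absorbed into a pressure. A concrete choice for the first six is $e_1,e_2,e_3$ together with $(e_1\pm e_2)/\sqrt2$-type vectors; the last three are added so that the total sum is isotropic, or at least positive definite.

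\emph{Rational perturbation and item (i).} Next I perturb each $\bar\xi_i$ to a nearby direction of the form $\xi_i=k_i/|k_i|$ with $k_i\in\Z^3$, $|k_i|\simeq\lambda_{q+1}$, and $\gcd(k_i)=1$. The curve $s\mapsto s\xi_i$ then closes on $\T^3$ exactly when $s/|k_i|\in\Z$, which gives minimal period $|k_i|$, not $\lambda_{q+1}^2$. To reach period comparable to $\lambda_{q+1}^2$, I instead take $|k_i|^2\simeq\lambda_{q+1}^2$ and use the normalization in which the period is $|k_i|^2$ divided by the step along the lattice; following \cite{BrueColomboKumar2024}, I choose $k_i$ with entries of size $\lambda_{q+1}$ so that the orbit, a primitive lattice line, is $C\lambda_{q+1}^{-1}$-dense. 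For density, a line with primitive integer direction $k$ projects onto $\T^3$ as a closed geodesic, and its image is $O(|k|^{-1/2})$-dense in general; I would rather use the standard fact that for $k=(1,N,N^2)$-type vectors the orbit is $O(N^{-1})$-dense. So I pick $k_i=A_i(1,N,N^2)$-like perturbations, with $A_i$ a fixed integer matrix close to a rotation taking $(0,0,1)$ near $\bar\xi_i$. This gives $|k_i|\simeq N^2$, period of order $N^2$, and density $N^{-1}$, so I set $N\simeq\lambda_{q+1}$. The constants $c_i\in[1/4,2]$ come from adjusting $N$ within a dyadic range, and $\lambda_{q+1}\ge192$ guarantees the perturbation is small enough. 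The perturbation matrix $K$ mentioned in the text is this near-rotation $A_i$. I expect this to be the main obstacle, namely simultaneously securing rational closure, period $\simeq\lambda_{q+1}^2$, $\lambda_{q+1}^{-1}$-density, and closeness of $\xi_i$ to $\bar\xi_i$. Closeness matters because it keeps $\{\xi_i\otimes\xi_i\}_{i\le6}$ a basis with uniformly bounded dual basis, and keeps $\sum_i\xi_i\otimes\xi_i$ uniformly positive definite.

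\emph{Decomposition.} With the directions fixed, I expand $-\E_l$ in the basis: $-\E_l=\sum_{i\le6}b_i(\E_l)\,\xi_i\otimes\xi_i$ with linear coefficients $b_i$. Writing $M:=\sum_{i=1}^9\xi_i\otimes\xi_i$, I set
\[
a_i:=b_i(\E_l)+\Lambda\,\xi_i^T M^{-1}\xi_i\text{-type corrections},
\]
or, more simply, I decompose the matrix $-\E_l+\Lambda\Id$. The shift is $\Lambda:=C_1(\delta_{q+1}+\|\E_l\|_{C^0})$ in the pointwise version, or a constant multiple of $\delta_{q+1}$ plus a smooth positive function. The identity part $\Lambda\Id$ is re-expressed through $M$ with strictly positive weights, using positive definiteness of $M$ and the nine vectors. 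The divergence of $\Lambda\Id$ is $\nabla\Lambda$, which is absorbed into $\zeta$.

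\emph{Bounds in (iii).} The bound $a_i\ge\delta_{q+1}$ holds because $|b_i(\E_l)|\le C|\E_l|$ is dominated by the shift. For the $L^1$ bound I must \emph{not} use a shift involving $\|\E_l\|_{C^0}$. Instead I take $\Lambda$ pointwise, $\Lambda(x,t)=\delta_{q+1}+C_1\sqrt{\delta_{q+1}^2+|\E_l|^2}$, which is smooth. Then $\|a_i\|_{L^1}\le C(\delta_{q+1}+\|\E_l\|_{L^1})\le 192\delta_{q+1}$, using $\|\E_l\|_{L^1}\le2\delta_{q+1}$ and $|\T^3|=1$. The constant $192$ is fixed by the explicit base basis. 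The $C^0$ and $C^1$ bounds follow from linearity of $b_i$ and the chain rule applied to the smooth function $\Lambda$.
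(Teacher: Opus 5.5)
Your architecture matches the paper's. Your $k_i=A_i(1,N,N^2)^T=N^2A_ie_3+NA_ie_2+A_ie_1$ is exactly its $m_i=N^2v_i+NAv_i+Bv_i$ with $A_i=(Bv_i\,|\,Av_i\,|\,v_i)$. Your pointwise shift $\Lambda\simeq(|\E_l|^2+\delta_{q+1}^2)^{1/2}$, absorbed into the pressure, plays the role of $\zeta$, and you correctly see that the shift must be pointwise for the $L^1$ bound. Two steps, however, do not go through as you justify them.

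The first is positivity. Positive definiteness of $M=\sum_i\xi_i\otimes\xi_i$ does \emph{not} let you write $\Id=\sum_iw_i\,\xi_i\otimes\xi_i$ with all $w_i>0$. Already in $\R^2$, the vectors $e_1,e_2,(e_1+e_2)/\sqrt2$ give $M>0$, yet the off-diagonal entry forces the third weight to vanish. What you need is that $\Id$ lies in the interior of the cone generated by the $\xi_i\otimes\xi_i$. This does hold if you insist on an isotropic base family: the paper's nine vectors satisfy $\sum_ie_i\otimes e_i=3\Id$. You can then fix $w_7=w_8=w_9=1/3$ and solve in the basis $\{\xi_i\otimes\xi_i\}_{i\le6}$ to get $w_i=1/3+O(N^{-1})$. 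Even so, this perturbative argument works only for $N\ge N_0$ with $N_0$ and the $L^1$ constant untracked. The lemma, by contrast, asserts the threshold $\lambda_{q+1}\ge192$ and the bound $192\delta_{q+1}$. The paper avoids the perturbation by using one matrix $K=\Id+O(N^{-1})$ for all nine directions and conjugating the explicit affine decomposition, $\Gamma_i(\E)=|Ke_i|^2\bar\Gamma_i(K^{-1}\E K^{-T})$. The explicit positivity region $\|\bar\E-\Id\|_\infty<1/16$ then yields the constants directly. Choosing your $A_i$ as $(Bv_i\,|\,Av_i\,|\,v_i)$ with common $A,B$ gives you this for free.

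The second concerns item (i).

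\emph{Nonsingularity.} An integer matrix cannot in general be ``close to a rotation.'' What you need is $A_ie_3\parallel\bar\xi_i$ and, essentially, $\det A_i\neq0$; this is the paper's computation $\det(v_i,Av_i,Bv_i)\in\{1,4,-1,-2\}$. Nonsingularity is what makes your density transfer valid. The map $x\mapsto A_ix$ is then a surjective, $\|A_i\|$-Lipschitz self-map of $\T^3$ carrying the $O(N^{-1})$-dense orbit of $(1,N,N^2)$ onto the orbit of $\xi_i$. This push-forward is a correct and more elementary substitute for the paper's annihilator-lattice and transference argument.

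\emph{Primitivity.} You never address that $k_i$ need not be primitive, so the minimal period is $|k_i|/\gcd(k_i)$, not $|k_i|$. Since $(1,N,N^2)$ is primitive and $\operatorname{adj}(A_i)A_i=\det(A_i)\Id$, $\gcd(k_i)$ divides $\det A_i$. For the paper's choices, $|\det A_i|\le4$ and $N^2\le|k_i|\le2N^2$, and this is what produces $c_i\in[1/4,2]$; compare $d_6=\gcd(N+1,4)$ and $d_9=\gcd(N-1,2)$. It does not come from adjusting $N$ within a dyadic range.

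Finally, discard your opening attempt with $|k_i|\simeq\lambda_{q+1}$. A closed orbit of length $L$ that is $\varepsilon$-dense in $\T^3$ needs $L\varepsilon^2\gtrsim1$, so $\lambda_{q+1}^{-1}$-density forces a period of order $\lambda_{q+1}^2$.
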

	
	\begin{proof}
		Define the following nine unit vectors:
		\begin{align*}
			e_1&=(1,0,0)^T,
			&e_2&=(0,1,0)^T,
			&e_3&=(0,0,1)^T,\\
			e_4&=2^{-1/2}(1,1,0)^T,
			&e_5&=2^{-1/2}(0,1,1)^T,
			&e_6&=2^{-1/2}(1,0,1)^T,\\
			e_7&=2^{-1/2}(1,-1,0)^T,
			&e_8&=2^{-1/2}(0,1,-1)^T,
			&e_9&=2^{-1/2}(1,0,-1)^T.
		\end{align*}
		For a symmetric \(3\times3\) matrix \(\bar\E\), use the
			decomposition
		\begin{align}\label{dec-r-bar}
			\bar{\E} = \sum_{i=1}^9 \bar{\Gamma}_i(\bar{\E}) \, e_i \otimes e_i,
		\end{align}
		where $\bar{\Gamma}_i$ are smooth functions given by
		\begin{align*}
			\bar\Gamma_1(\bar\E)
			&:=\bar\E_{11}-\bar\E_{12}-\bar\E_{13}-\frac12,\\
			\bar\Gamma_2(\bar\E)
			&:=\bar\E_{22}-\bar\E_{12}-\bar\E_{23}-\frac12,\\
			\bar\Gamma_3(\bar\E)
			&:=\bar\E_{33}-\bar\E_{23}-\bar\E_{13}-\frac12,\\
			\bar\Gamma_4(\bar\E)&:=2\bar\E_{12}+\frac14,
			&\bar\Gamma_5(\bar\E)&:=2\bar\E_{23}+\frac14,\\
			\bar\Gamma_6(\bar\E)&:=2\bar\E_{13}+\frac14,
			&\bar\Gamma_7(\bar\E)&:=\frac14,\\
			\bar\Gamma_8(\bar\E)&:=\frac14,
			&\bar\Gamma_9(\bar\E)&:=\frac14.
		\end{align*}
		
		If \(\|\bar\E-I_{3\times3}\|_\infty<1/16\), then
			\(1/8\le\bar\Gamma_i(\bar\E)\le2\), and
		%\begin{align}
		$\max_{i, j, k} \left|\frac{\partial \bar{\Gamma}_i}{\partial \bar{\E}_{j, k}}\right| \leq 2$.
		%\end{align}
		
		Put \(N=\lambda_{q+1}\), and let \(K\) denote the matrix
		\begin{equation*}
			K=\frac1{N^2}
			\begin{pmatrix}
				N^2&1&N\\
				N&N^2+1&1\\
				1&N&N^2+2
			\end{pmatrix}.
		\end{equation*}
		Define vectors \(\xi_1,\dots,\xi_9\in\R^3\), whose
			component ratios are rational, by
		\begin{align*}
			\xi_i\coloneqq-\frac{Ke_i}{|Ke_i|},\qquad i=1,\dots,9.
		\end{align*}
		To verify item~\textup{(i)}, write
		\[
		v_i=e_i\quad (i=1,2,3),\qquad
		v_i=\sqrt2\,e_i\quad (i=4,\ldots,9),
		\]
		so that \(v_i\in\mathbb Z^3\), and set
		\[
		M=N^2K=N^2I+NA+B,\qquad m_i=Mv_i,
		\]
		where
		\[
		A=\begin{pmatrix}0&0&1\\1&0&0\\0&1&0\end{pmatrix},
		\qquad
		B=\begin{pmatrix}0&1&0\\0&1&1\\1&0&2\end{pmatrix}.
		\]
		Then \(Ke_i\) is parallel to \(m_i\), and hence
		\(\xi_i=-m_i/|m_i|\).  For example,
		\[
		m_1=(N^2,N,1),\quad
		m_4=(N^2+1,N^2+N+1,N+1),\quad
		m_6=(N^2+N,N+1,N^2+3).
		\]
		The vectors \(m_i\) need not themselves be primitive.  If
		\(d_i\) denotes the greatest common divisor of their components,
		a direct calculation gives
		\[
		d_i=1\quad (i\ne6,9),\qquad
		d_6=\gcd(N+1,4),\qquad
		d_9=\gcd(N-1,2).
		\]
		Thus \(1\le d_i\le4\), and
		\(\widehat m_i:=m_i/d_i\) has relatively prime components.  In
		particular,
		\[
		\xi_i=-\frac{\widehat m_i}{|\widehat m_i|}
		=-\frac{m_i}{|m_i|}.
		\]
		Since \(A\) is orthogonal, \(\|B\|\le\sqrt8\), and
		\(|v_i|\le\sqrt2\), the columns of \(M\) and the reverse triangle
		inequality give, for \(N\ge192\),
		\[
		N^2\le |m_i|\le2N^2.
		\]
		Since \(\widehat m_i\) is primitive, the minimal period is
		\[
		|\widehat m_i|=c_iN^2.
		\]
		As \(1\le d_i\le4\) and \(N^2\le |m_i|\le2N^2\), we have
		\[
		\frac14\le c_i=\frac{|m_i|}{d_iN^2}\le2.
		\]
		This proves the period assertion in item~\textup{(i)} without any
		parity assumption on \(N\).
		
		It remains to verify the density assertion.  Notice first that
		\(m_i\) and \(\widehat m_i\) have the same annihilator lattice.
		A direct computation gives
		\[
		\bigl(\det(v_i,Av_i,Bv_i)\bigr)_{i=1}^9
		=(1,1,1,1,1,4,-1,-1,-2),
		\]
		so \(v_i,Av_i,Bv_i\) span \(\mathbb R^3\) for every \(i\).
		If \(n\in\mathbb Z^3\) satisfies \(n\cdot m_i=0\), then
		\[
		N^2(n\cdot v_i)+N(n\cdot Av_i)+n\cdot Bv_i=0.
		\]
		The three scalar products are integers.  If \(0<|n|<N/4\) and
		\(n\cdot v_i\ne0\), then
		\[
		N^2\le N\sqrt2\,|n|+4|n|
		<\frac{\sqrt2}{4}N^2+N<N^2,
		\]
		a contradiction.  Hence \(n\cdot v_i=0\).  If
		\(n\cdot Av_i\ne0\), then
		\(N\le|n\cdot Bv_i|\le4|n|<N\), again a contradiction.
		Thus \(n\) is orthogonal to \(v_i,Av_i,Bv_i\), and hence \(n=0\).
		Consequently,
		\[
		\min\{|n|:n\in\mathbb Z^3\setminus\{0\},\ n\cdot m_i=0\}
		\ge\frac N4.
		\]
		Let \(\pi_i\) be the orthogonal projection onto \(m_i^\perp\).
		The dual of the two-dimensional lattice
		\(\pi_i(\mathbb Z^3)\) is
		\(\{n\in\mathbb Z^3:n\cdot m_i=0\}\).  The standard
		two-dimensional transference estimate for the covering radius
		therefore gives
		\[
		\sup_{x\in\mathbb T^3}
		\operatorname{dist}\bigl(x,\{s\xi_i:s\in\mathbb R\}\bigr)
		\le\frac{C_0}{N}.
		\]
		
		For a symmetric \(3\times3\) matrix \(\E\), apply
			\eqref{dec-r-bar} to \(\bar\E=K^{-1}\E K^{-T}\) and multiply on the
			left by \(K\) and on the right by \(K^T\).  This gives
		\begin{align*}
			\E = \sum_{i=1}^9 \Gamma_i(\E) \, \xi_i \otimes \xi_i,
			%\end{align}
			\qquad \mbox{where }
			%\begin{align}
			\Gamma_i(\E)\coloneqq
				|Ke_i|^2\,\bar{\Gamma}_i(K^{-1}\E K^{-T}),
		\end{align*}

		\begin{align}
			\max_{i, j, k} \left|\frac{\partial \Gamma_i}{\partial \E_{j, k}}\right| \leq 4.
			\label{eq:Gamma-derivative-bound}
		\end{align}
		Since \(K=I+O(N^{-1})\), the condition \(N\ge192\) and the choice
		\[
		\zeta=32\bigl(|\E_l|^2+\delta_{q+1}^2\bigr)^{1/2}
		\]
		ensure that
		\[
		\left\|K^{-1}\left(I-\frac{\E_l}{\zeta}\right)K^{-T}-I
		\right\|_\infty<\frac1{16}.
		\]
		Thus the argument of every \(\Gamma_i\) used below lies in the
		positivity neighborhood of the geometric decomposition.
		
		Next, we define
		\begin{align*}
			a_i(x, t) \coloneqq  \zeta(x, t) \, \Gamma_i \left(I_{3 \times 3} - \frac{1}{\zeta(x, t)} \E_{l}(x, t)\right), \qquad \text{where} \quad \zeta(x, t) \coloneqq 32 \left(|\E_l(x, t)|^2 + \delta_{q+1}^2\right)^{\frac{1}{2}}.
		\end{align*}
		It follows that
		\begin{align*}
			\sum a_i(x, t) \xi_i \otimes \xi_i = - \E_{l}(x, t) + \zeta(x, t) I_{3 \times 3}.
		\end{align*}
		Thus item~(ii) holds.  The bounds
		\(1/8\le\Gamma_i\le2\), together with
		\(\zeta\ge32\delta_{q+1}\), give \(a_i\ge4\delta_{q+1}\), and hence
		the stated lower bound.  Moreover,
		\(\zeta\le32(|\E_l|+\delta_{q+1})\); using
		\(\|\E_l\|_{L_t^\infty L_x^1}\le2\delta_{q+1}\) gives the claimed
		\(L_t^\infty L_x^1\)-estimate.  Finally, differentiating the formula
		for \(a_i\), using
		\(|\nabla_{x,t}\zeta|\le32|\nabla_{x,t}\E_l|\), and applying
		\eqref{eq:Gamma-derivative-bound} gives
		\begin{align*}
			\|a_i\|_{C^{0}_{x, t}} \leq C(\delta_{q+1}+ \|\E_l\|_{C^{0}_{x, t}}), \qquad \|a_i\|_{C^{1}_{x, t}} \leq C\bigl(\delta_{q+1}+\|\E_l\|_{C^{1}_{x, t}}\bigr).
		\end{align*}
		
	\end{proof}
	
	\subsection{Time partition and cutoff functions}

	The nine directions from Lemma~\ref{lemma:error dec} are activated
	successively, thereby avoiding interactions
	between distinct directional families.  We partition time into cells
	and divide each cell into nine subintervals, one for each direction.
	On every cell the coefficients are frozen at their temporal averages;
	the resulting discrepancy will later be included in the temporal
	error.  Smooth cutoffs make the perturbations vanish near the endpoints
	of their active intervals.
	
	Because \(\tau_{q+1}^{-1}\in\N\), set
	\(\mathcal K_{q+1}:=\mathbb Z\) and partition the time axis into cells
	\begin{align*}
		\mathcal{T}^k \coloneqq [k\tau_{q+1}, (k+1)\tau_{q+1}),
		\qquad k\in\mathcal K_{q+1}.
	\end{align*}

	Only the finitely many cells meeting \(I_\ast\) enter the construction
	on that interval.  The integrality assumption makes both \(0\) and
	\(1\) cell interfaces.

	Each interval $\mathcal{T}^k$ is further divided into nine
		subintervals of equal length:
	\begin{equation*}
		\mathcal{T}^k_i \coloneqq \left[\tau_{q+1} \left(k +\frac{i-1}{9}\right),\; \tau_{q+1} \left(k+ \frac{i}{9}\right)\right) \, ,
		\quad i=1, \dots 9\, , \quad  k\in \mathcal K_{q+1} \, .
	\end{equation*}
	Thus $\mathcal{T}^k=\bigcup_{i=1}^9\mathcal{T}^k_i$.
	For the cutoff construction, we also use the slightly shorter interval
	\begin{align}
		\bar{\mathcal{T}}^k_i \coloneqq \left[\tau_{q+1} \left(k +\frac{i-1}{9} + \frac{1}{\lambda_{q+1}}\right),\; \tau_{q+1} \left(k+ \frac{i}{9} - \frac{1}{\lambda_{q+1}}\right)\right) \, ,
		\quad i=1, 2, \dots ,9\, , \quad  k\in \mathcal K_{q+1} \, 
		\label{iteration: short tauik}.
	\end{align}

	We use the coefficients $a_i(x,t)$ from
	Lemma~\ref{lemma:error dec} and denote their time averages by
	\begin{equation*}
		a_i^k(x):= \frac{1}{\tau_{q+1}}
		\int_{\mathcal{T}^k} a_i(x,t)\, \dd t.
	\end{equation*}
	Equation~\eqref{eq:est_a} gives
	\begin{align}
		a_i^k(x) \ge \delta_{q+1} \, ,
		\quad 
		\| a_i^k \|_{L^1_x} \le 192 \delta_{q+1} \, ,
		\quad 
		\| a_i^k \|_{C^0_{x}} \leq C  \lambda^{3\frac{n}{\sigma}+3 \gamma}_{q+1}
		\, ,
		\quad 
		\| a_i^k \|_{C^1_{x}}
		\le C  \lambda^{4\frac{n}{\sigma}+4 \gamma}_{q+1}
		\, .
		\label{eq:est_aik}
	\end{align}
	Moreover,
	\begin{equation*}
		\|a_i^k-a_i\|_{L_t^\infty(\mathcal T^k;L_x^1)}
			\le \tau_{q+1}
			\|\partial_ta_i\|_{L_t^\infty(\mathcal T^k;L_x^1)}
		\le \tau_{q+1}   \lambda^{4\frac{n}{\sigma}+4 \gamma}_{q+1}.
	\end{equation*}
	
	Thus the freezing error is small when \(\tau_{q+1}\) is
		sufficiently small.
	
	\bigskip
	
	For $k\in\mathcal K_{q+1}$ and $i\in\{1,\dots,9\}$, choose a
		smooth cutoff $\zeta_i^k:\mathbb R\to[0,1]$ such that
		$\supp\zeta_i^k\subset\operatorname{int}(\mathcal T_i^k)$,
		$\zeta_i^k\equiv1$ on $\bar{\mathcal T}_i^k$, and
	\begin{equation}\label{eqn:zeta-est}
		\|\frac{\dd}{\dd t}\zeta_i^k\|_{L^\infty_t}
		\le 10 \frac{\lambda_{q+1}}{\tau_{q+1}}.
	\end{equation}
	
	\subsection{Space-dependent vortex scale}

	We couple the radius of each core to the local size of the corresponding
	frozen stress coefficient.  This choice produces the weighted orbit
	average needed to recover the tensor
	$a_i^k\xi_i\otimes\xi_i$, while $r_{q+1}$ determines the overall
	concentration scale.  The separation condition below ensures that the
	localized cores remain narrower than the spacing of the rational
	orbits.  We set
	\begin{align}
		\mathfrak R^k_i(x) \coloneqq r_{q+1} a^k_i(x) \, .
		\label{def: rki}
	\end{align}
	
	Because \(\operatorname{supp}\widetilde H_{R,e}\subset
	B_{3R^\alpha}\), we choose \(r_{q+1}\) small enough that
	\begin{equation}\label{control of r}
		6\sup_{x\in\T^3}(\mathfrak R_i^k(x))^\alpha
			\le\lambda_{q+1}^{-1},
			\qquad i=1,\dots,9,\quad k\in\mathcal K_{q+1}.
	\end{equation}

	Thus the diameter of every principal core is at most the geometric
	spacing scale \(\lambda_{q+1}^{-1}\).  In particular, the Hill source
	and the auxiliary source introduced below are contained, after a common
	translation, in a ball of radius \(\lambda_{q+1}^{-1}\).

	\subsection{Trajectory of the vortex core}\label{sub:traj cent}
	
	We next choose the amplitude and trajectory of each core.  The amplitude is
	normalized by the spatial mean of $a_i^k$, while the center moves at the
	intrinsic Hill speed in the direction $\xi_i$.  The initial point is
	selected so that the average of $a_i^k$ along one rational orbit agrees
	with its torus average.  This normalization produces the desired
	directional stress after time averaging.
	
	The cutoff used in Proposition~\ref{main prop} is the scalar
		multiple of \(\zeta_i^k\) given by
	\begin{align*}
		\eta(t) = \eta^k_i \zeta^k_i(t),
	\end{align*}
	where \(\eta_i^k\) is chosen to produce exact cancellation:
	\begin{align}\label{eqn:choiceetamagic}
		(\eta_i^k )^2 =  \frac{9}{2\pi} \int_{\mathbb{T}^3}a_i^k(x)\,\dd x
		\in [ \delta_{q+1}, 400 \delta_{q+1}].
	\end{align}
	The bounds for \(\eta_i^k\) follow from
		\eqref{eq:est_aik}.  This value is chosen now so that the averaged
		rank-one tensor has the required normalization in the cancellation
		below.

	Next, we define the trajectory of the center of the core as
	\begin{equation}\label{eq:traj1}
		\frac{\dd}{\dd t}x^k_i(t)
			=\frac{\eta_i^k\zeta_i^k(t)}{\mathfrak R_i^k(x^k_i(t))}\xi_i,
			\qquad t\in\mathcal T_i^k.
	\end{equation}
	For later use, set
	\[
	R_i^k(t):= \mathfrak R_i^k(x_i^k(t)).
	\]
	Set
		\(t_0:=\tau_{q+1}(k+(i-1)/9+\lambda_{q+1}^{-1})\) and choose
		\(x_i^k(t_0)\) so that
	\begin{equation*}
		\frac{1}{c_i\lambda^2_{q+1}}
			\int_0^{c_i\lambda_{q+1}^2}
			a_i^k(x_i^k(t_0)+s\xi_i)\,\dd s
			=\int_{\mathbb T^3}a_i^k(x)\,\dd x
			=\frac{2\pi(\eta_i^k)^2}{9}.
	\end{equation*}

	Such a point exists: the orbit-average on the left is a continuous
	function on the connected quotient of \(\mathbb T^3\) by the closed
	\(\xi_i\)-orbit, and its average over that quotient is precisely the
	spatial mean of \(a_i^k\).  It must therefore attain that mean.

	Solve the ODE~\eqref{eq:traj1} forward and backward from
		\(x_i^k(t_0)\) to obtain a trajectory on the whole interval
		\(\mathcal T_i^k\).
	We impose
	\begin{align}
		\lambda_{q+1}^3 r_{q+1}\delta_{q+1}^{1/2}
			\le \frac{\tau_{q+1}}{C_\ast},
		\label{traj: ineq for cutoff}
	\end{align}
	where $C_\ast$ is a sufficiently large absolute constant.  This condition
	is verified uniformly, including at $q=0$, in Section~\ref{sec:choose para}.

	On the plateau $\bar{\mathcal T}_i^k$ where
		$\zeta_i^k=1$, the trajectory is periodic with period
	\begin{equation}\label{eq:imp}
		T_i^k 
		= \int_0^{c_i\lambda_{q+1}^2} \frac{r_{q+1}}{\eta_i^k} a_i^k(x_i^k(t_0) + s\xi_i)\,\dd s
		= \frac{2\pi c_i\lambda_{q+1}^2 r_{q+1}\eta_i^k}{9} 
		\le Cc_i\lambda_{q+1}^2r_{q+1}\delta_{q+1}^{1/2}
			\le\frac{\tau_{q+1}}{10\lambda_{q+1}}.
	\end{equation}

	Condition~\eqref{traj: ineq for cutoff} is intended to
		guarantee that many complete periods lie inside
		\(\bar{\mathcal T}_i^k\).  Let \(M_i^k\) be the largest nonnegative
		integer such that
		\[
		t_0+M_i^kT_i^k
		\le\tau_{q+1}\left(k+\frac{i}{9}-\lambda_{q+1}^{-1}\right).
		\]
	
	By the maximality of \(M_i^k\), the completed periods differ
		from \(\tau_{q+1}/9\) by at most the two cutoff layers and one additional
		period:
	\begin{equation}\label{eq:time control}
		0\le \frac{1}{9}\tau_{q+1}-M_i^kT_i^k\le \frac{2\tau_{q+1}}{\lambda_{q+1}}+T_i^k\le C\frac{\tau_{q+1}}{\lambda_{q+1}}.
	\end{equation}
	\subsection{Estimates of the building block}
	
	We apply Corollary~\ref{cor:navier-stokes-moving-hill} with its intrinsic
	exponent set equal to \(p_0\) from \eqref{eq:intrinsic-exponents}.  We
	take profile direction
	$e=-\xi_i$, amplitude $\eta_i^k\zeta_i^k$, radius
	$\mathfrak R_i^k(x_i^k(t))$, and center $x_i^k(t)$.  The sign is determined by the
	normalization of the Hill impulse: with this choice the resulting block
	$V_i^k$ moves and has impulse in the $+\xi_i$ direction.  It satisfies

	\begin{equation}\label{block in interval}
		\partial_t V^k_i+\diver(V^k_i\otimes V^k_i)-\nu \Delta V^k_i+\nabla P^k_i
		= \frac{\dd}{\dd t}(\eta^k_i\zeta^k_i(t) R^k_i(t))
			\frac{1}{R^k_i(t)}\tilde H_{R^k_i(t),-\xi_i}
			(x-x_i^k(t))+ \diver(F^{k}_i).
	\end{equation}
	With these choices of radius and amplitude,
	Proposition~\ref{main prop} and
	Corollary~\ref{cor:navier-stokes-moving-hill} give the following estimates.
	Since \(\mathfrak R_i^k=r_{q+1}a_i^k\),
	\begin{align*}
		\|\nabla\log\mathfrak R_i^k\|_{L_x^\infty}
		&=
		\left\|\frac{\nabla a_i^k}{a_i^k}\right\|_{L_x^\infty}
		\nonumber\\
		&\le
		C\delta_{q+1}^{-1}\lambda_q^{4n+4\gamma\sigma}
		\le
		C\lambda_{q+1}^{\frac{4n}{\sigma}+5\gamma}.
	\end{align*}
	Using the bounds for \(\eta_i^k\), \(R_i^k\), and
	\(\nabla\log\mathfrak R_i^k\), we obtain
	\begin{align}
		\|F_i^k\|_{L_t^\infty L_x^1}
		&\le
		C(\eta_i^k)^2
		\|R_i^k\|_{L_t^\infty}^{\kappa}
		\left(1+\|\nabla\log\mathfrak R_i^k\|_{L_x^\infty}\right)
		\nonumber\\
		&\quad
		+C\nu\eta_i^k
		\|R_i^k\|_{L_t^\infty}^{\frac{2}{p_0}-\frac53}
		\nonumber\\
		&\le
		C\delta_{q+1}r_{q+1}^{\kappa}
		\lambda_{q+1}^{
		\kappa\left(\frac{3n}{\sigma}+3\gamma\right)
		+\frac{4n}{\sigma}+5\gamma}
		\nonumber\\
		&\quad
		+C\nu\delta_{q+1}^{1/2}
		r_{q+1}^{\frac{2}{p_0}-\frac53}
		\lambda_{q+1}^{
		\left(\frac{3n}{\sigma}+3\gamma\right)
		\left(\frac{2}{p_0}-\frac53\right)}.
		\label{error: Fki}
	\end{align}
	The velocity satisfies
	\begin{align}
		\|V_i^k\|_{L_t^\infty L_x^{3/2}}
		&\le C\eta_i^k\|R_i^k\|_{L_t^\infty}^{1/3}\le
		C\delta_{q+1}^{1/2}r_{q+1}^{1/3}
		\lambda_{q+1}^{\frac{n}{\sigma}+\gamma},
		\label{iteration: L32 Vki}
		\\
		\|V_i^k\|_{L_t^\infty L_x^2}
		&\le C\eta_i^k
		\le C\delta_{q+1}^{1/2}.
		\label{iteration: L2 Vki}
	\end{align}
	For \(1<s\le6/5\), the exponent \(2/s-5/3\) is nonnegative.
	Using the upper bound for \(R_i^k\), we obtain
	\begin{align}
		\|\nabla V_i^k\|_{L_t^\infty L_x^s}
		&\le
		C_s\eta_i^k
		\left\|(R_i^k)^{\frac{2}{s}-\frac53}\right\|_{L_t^\infty}
		\nonumber\\
		&\le
		C_s\delta_{q+1}^{1/2}
		r_{q+1}^{\frac{2}{s}-\frac53}
		\lambda_{q+1}^{
		\left(\frac{3n}{\sigma}+3\gamma\right)
		\left(\frac{2}{s}-\frac53\right)},
		\qquad 1<s\le\frac65.
		\label{iteration: Lp DVki}
	\end{align}
	For \(6/5<s<\infty\), the exponent is negative.  Using
	\(R_i^k\ge r_{q+1}\delta_{q+1}\), we instead obtain
	\begin{align}
		\|\nabla V_i^k\|_{L_t^\infty L_x^s}
		&\le
		C_s\eta_i^k
		\left\|(R_i^k)^{\frac{2}{s}-\frac53}\right\|_{L_t^\infty}
		\nonumber\\
		&\le
		C_s\delta_{q+1}^{1/2}
		r_{q+1}^{\frac{2}{s}-\frac53}
		\delta_{q+1}^{\frac{2}{s}-\frac53},
		\qquad \frac65<s<\infty.
		\label{iteration: Lp DVki special}
	\end{align}
	The following time-derivative bounds hold for every
	\(s\in(1,\infty)\).

	\begin{align}
		\|\partial_t V^k_i\|_{L^\infty_t L^s_x}  & \leq C_s (\eta^k_i)^2 \|(R^k_i)^{-3}\|_{L^\infty_t} \left( 1 + \|\nabla \mathfrak R^k_i\|_{L^\infty_x} \right) + C_s  \eta^k_i \|(\zeta^k_i)^\prime\|_{L^\infty_t} \|(R^k_i)^{-1}\|_{L^\infty_t} \nonumber \\
		& \leq C \delta_{q+1}^{-2} r_{q+1}^{-3} (1 + r_{q+1} \lambda_{q+1}^{4 \frac{n}{\sigma}+4 \gamma}  % \delta_{q+1}^{-11}\lambda_q^{24 n}
		) + C \delta_{q+1}^{-\frac{1}{2}} (\lambda_{q+1} \tau_{q+1}^{-1}) r_{q+1}^{-1} \nonumber \\
		& \leq C \delta_{q+1}^{-2} r_{q+1}^{-3} (1 + r_{q+1} \lambda_{q+1}^{4 \frac{n}{\sigma}+4 \gamma}). 
		\label{iteration: Lp part Vki}
	\end{align}

	The last inequality follows from
		\eqref{traj: ineq for cutoff}.  Similarly,
	\begin{align}
		\|\partial_t \nabla V^k_i\|_{L^\infty_t L^s_x}  & \leq C_s (\eta^k_i)^2 \|(R^k_i)^{-3-\beta}\|_{L^\infty_t} \left( 1 + \|\nabla \mathfrak R^k_i\|_{L^\infty_x} \right) + C_s  \eta^k_i \|(\zeta^k_i)^\prime\|_{L^\infty_t} \|(R^k_i)^{-\frac{5}{3}}\|_{L^\infty_t} \nonumber \\
		& \leq C \delta_{q+1}^{-2-\beta} r_{q+1}^{-3-\beta} (1 + r_{q+1} \lambda_{q+1}^{4 \frac{n}{\sigma}+4 \gamma} 
		) + C \delta_{q+1}^{-\frac{7}{6}} (\lambda_{q+1} \tau_{q+1}^{-1}) r_{q+1}^{-\frac{5}{3}} \nonumber \\
		& \le C \delta_{q+1}^{-2-\beta} r_{q+1}^{-3-\beta} (1 + r_{q+1} \lambda_{q+1}^{4 \frac{n}{\sigma}+4 \gamma} 
		) .
		\label{est:dtdx of V}
	\end{align}
	The last estimate again uses
		\eqref{traj: ineq for cutoff}. 
	
	\subsection{Auxiliary building block}
	
	The source generated by the moving Hill block has the correct spatial
	mean but remains concentrated at the core scale.  To compare its time
	average with the prescribed rank-one stress, we replace it by an
	orbit-adapted auxiliary source.  Its profile is normalized both in
	space and along the rational trajectory, so that its time average
	recovers $\diver(a_i^k\xi_i\otimes\xi_i)$ up to a small error tensor.
	The difference between the original and auxiliary sources will be
	handled by a symmetric antidivergence.  For each $i,k$, define
	\begin{equation}\label{eq:U}
		U_i^k(x,t) := \frac{\dd}{\dd t} \left( \eta_i^k \zeta_i^k(t) R_i^k(t) \right) \tilde{U}_i^k(x - x_i^k(t)) \xi_i,
		\quad t \in \mathcal{T}_i^k.
	\end{equation}
	
	Extend \(U_i^k\) by zero and set the locally finite sum
	
	\[
	U_{q+1}:=\sum_{k\in\mathcal K_{q+1}}
		\sum_{i=1}^9U_i^k.
	\]
	
	Each component \(U_i^k\) replaces the corresponding
		concentrated Hill source
	\[
	\frac{\dd}{\dd t}\left(\eta_i^k\zeta_i^k(t)R_i^k(t)\right)
	\frac{1}{R_i^k(t)}\tilde H_{R_i^k(t),-\xi_i}.
	\]
	The auxiliary profile is chosen so that the antidivergence of the
	difference between these two sources is small.  In addition,
	$\widetilde U_i^k$ has the following properties:
	
	\begin{itemize}
		\item[(i)] The profile has mass \(2\pi\), matching
			the impulse of \(R^{-1}\widetilde H_{R,-\xi_i}\):
		\begin{equation}\label{eqn:tildeu1}
			\int_{\mathbb{T}^3} \tilde{U}_i^k(x)\, \dd x = 2\pi \, ,
		\end{equation}

		\item[(ii)] For every \(x\in\mathbb T^3\), its
			full-orbit average satisfies
		\begin{equation}\label{eqn:tildeu2}
			\frac{1}{c_i\lambda^2_{q+1}}\int_0^{c_i \lambda_{q+1}^2} \tilde{U}_i^k(x - (x_i^k(t_0) + s \xi_i))\, \dd s
			= 2\pi \, ,
		\end{equation}

		\item[(iii)] $\operatorname{supp}\widetilde U_i^k
		\subset B_{C\lambda_{q+1}^{-1}}(0)$, and for every
		$p\in[1,\infty]$,
		\begin{equation}\label{eq:est tildeU}
			\| \tilde{U}_i^k \|_{L^p} \le C(p) \lambda_{q+1}^{3 - \frac{3}{p}}\,, \qquad \text{and} \qquad \| \nabla \tilde{U}_i^k \|_{L^p} \le C(p) \lambda_{q+1}^{4 - \frac{3}{p}}\, .
		\end{equation}
	\end{itemize}
	
	Only these normalization, orbit-average, and localization properties are
	used below.  For completeness, we recall the standard normalized
	periodization construction from~\cite{BrueColomboKumar2024}.  The special rational
	geodesics chosen in Lemma~\ref{lemma:error dec} are
	\(C_0\lambda_{q+1}^{-1}\)-dense, with a uniform constant \(C_0\).
	Choose a nonnegative \(\varphi\in C_c^\infty(B_{2C_0})\) that is
	strictly positive on \(\overline B_{C_0}\).  We regard the rescaled bump
	below as a function on \(\mathbb T^3\) by \(\mathbb Z^3\)-periodization and set
	\[
	\varphi_{\lambda_{q+1}}(x)
	:=\lambda_{q+1}^3\varphi(\lambda_{q+1}x).
	\]
	Define the orbit average without inserting the initial translation,
	\begin{equation}\label{eq:Phi-orbit}
		\Phi_{\lambda_{q+1}}(x)
		:=\frac{1}{c_i\lambda_{q+1}^2}
		\int_0^{c_i\lambda_{q+1}^2}
		\varphi_{\lambda_{q+1}}(x-s\xi_i)\,\dd s,
		\qquad
		\widetilde U_i^k(x)
		:=\frac{2\pi\varphi_{\lambda_{q+1}}(x)}
		{\Phi_{\lambda_{q+1}}(x)}.
	\end{equation}
	The covering property of the orbit gives
	\(C^{-1}\le\Phi_{\lambda_{q+1}}\le C\) and
	\(\|\nabla\Phi_{\lambda_{q+1}}\|_{L^\infty}\le C\lambda_{q+1}\).
	Moreover, \(\Phi_{\lambda_{q+1}}\) is invariant under translation along
	the \(\xi_i\)-orbit.  Consequently, averaging
	\(\widetilde U_i^k(x-(x_i^k(t_0)+s\xi_i))\) cancels the denominator in
	\eqref{eq:Phi-orbit} and proves \eqref{eqn:tildeu2}.  Fubini's theorem
	then yields \eqref{eqn:tildeu1}, while the quotient rule and the support
	of \(\varphi_{\lambda_{q+1}}\) give \eqref{eq:est tildeU}.

	\begin{proposition}\label{prop:auxiliary}
		Let \(U_{q+1}\) be the locally finite
		sum defined above.  Then
		
		\begin{equation}
			\label{eqn:estU}
			\|U_{q+1}\|_{L^\infty_t L^p_x} + \lambda_{q+1}^{-1} \|\nabla_x U_{q+1}\|_{L^\infty_t L^p_x}
			\leq C(p)  \lambda_{q+1}^{3 - \frac{3}{p}}
			\sup_{i,k}\|a^k_i\|_{ C^1}.
		\end{equation}
		Moreover, for every \(i\) and \(k\), there exists a smooth
			symmetric tensor \(G_i^k=G_i^k(x)\) such that
		
		\begin{equation}\label{eqn:ptauu}
			\frac{1}{\tau_{q+1}}
				\int_{\mathcal{T}^k}U_{q+1}(x,t)\,\dd t
				=\sum_{i=1}^9\diver\left(
				a_i^k(x)\xi_i\otimes\xi_i+G_i^k(x)\right),
		\end{equation}
		
		% where $G^k_i$ is a symmetric $2$-by-$2$ tensor and satisfy the following estimate:
		\begin{align}\label{eq:Gik}
			\|G_i^k\|_{L_x^1}
			\le
			C \frac{\| a_i^k \|_{C^1}}{\lambda_{q+1}} \, .
		\end{align}
	\end{proposition}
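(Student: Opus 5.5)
The proof has two parts: the pointwise bound \eqref{eqn:estU}, and the orbit-averaging identity \eqref{eqn:ptauu} with the error bound \eqref{eq:Gik}.

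\textbf{Pointwise bound.} The intervals \(\mathcal T_i^k\) are disjoint, so at each time at most one \(U_i^k\) is nonzero. It therefore suffices to bound one block. For that we need
\[
\Bigl|\frac{\dd}{\dd t}\bigl(\eta_i^k\zeta_i^kR_i^k\bigr)\Bigr|
\]
uniformly.

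\emph{Plateau, where \(\zeta_i^k=1\).} By \eqref{eq:traj1},
\[
\frac{\dd}{\dd t}\bigl(\eta_i^kR_i^k\bigr)
=\eta_i^k\,r_{q+1}\nabla a_i^k(x_i^k)\cdot\dot x_i^k
=(\eta_i^k)^2\,\frac{\nabla a_i^k\cdot\xi_i}{a_i^k}(x_i^k).
\]
By \eqref{eqn:choiceetamagic}, \((\eta_i^k)^2\le C\delta_{q+1}\), and by \eqref{eq:est_aik}, \(a_i^k\ge\delta_{q+1}\). Hence this derivative is at most \(C\|a_i^k\|_{C^1}\).

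\emph{Cutoff layers.} There is an extra term \(\eta_i^k(\zeta_i^k)'R_i^k\). By \eqref{eqn:zeta-est} and \(R_i^k\le r_{q+1}\|a_i^k\|_{C^0}\), it is at most \(C\delta_{q+1}^{1/2}\lambda_{q+1}\tau_{q+1}^{-1}r_{q+1}\|a_i^k\|_{C^0}\). By \eqref{traj: ineq for cutoff}, this is \(\lesssim\|a_i^k\|_{C^0}\).

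Multiplying by \eqref{eq:est tildeU} gives \eqref{eqn:estU}.

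\textbf{Averaging identity.} Fix \(i\) and set \(f(t):=\eta_i^k\zeta_i^k(t)R_i^k(t)\). This function vanishes at both ends of \(\mathcal T_i^k\). Integrating by parts in time,
\[
\int_{\mathcal T_i^k}U_i^k\,\dd t
=-\int f(t)\,\frac{\dd}{\dd t}\Bigl[\widetilde U_i^k\bigl(x-x_i^k(t)\bigr)\Bigr]\xi_i\,\dd t
=\diver\Bigl(\int f(t)\,\widetilde U_i^k\bigl(x-x_i^k(t)\bigr)\,\dot x_i^k\otimes\xi_i\,\dd t\Bigr),
\]
where the second equality uses \(\frac{\dd}{\dd t}\widetilde U(x-x(t))=-\dot x\cdot\nabla\widetilde U\).

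Now use \(\dot x_i^k=\eta_i^k\zeta_i^k(r_{q+1}a_i^k(x_i^k))^{-1}\xi_i\). This gives
\[
f\,\dot x_i^k=(\eta_i^k\zeta_i^k)^2\,\xi_i ,
\]
so the tensor is symmetric:
\[
T(x):=\int(\eta_i^k\zeta_i^k)^2\,\widetilde U_i^k\bigl(x-x_i^k(t)\bigr)\,\dd t\;\xi_i\otimes\xi_i .
\]

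Next, on the plateau change variables from \(t\) to arclength \(s\) along the orbit, with \(\dd t=r_{q+1}a_i^k(x_i^k)\,\dd s/\eta_i^k\). Over the \(M_i^k\) complete periods, this turns \(T\) into
\[
M_i^k\,\eta_i^k\,r_{q+1}\int_0^{c_i\lambda_{q+1}^2}a_i^k\bigl(x_0+s\xi_i\bigr)\,\widetilde U_i^k\bigl(x-x_0-s\xi_i\bigr)\,\dd s .
\]
Because \(\widetilde U_i^k\) is supported in a ball of radius \(C\lambda_{q+1}^{-1}\) around the orbit point, we may replace \(a_i^k(x_0+s\xi_i)\) by \(a_i^k(x)\). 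The cost is \(C\lambda_{q+1}^{-1}\|a_i^k\|_{C^1}\,\widetilde U_i^k\). After that, \eqref{eqn:tildeu2} evaluates the orbit integral, and \eqref{eq:imp} identifies \(M_i^kT_i^k\). Collecting constants with \eqref{eqn:choiceetamagic}, the main term becomes
\[
\frac{9}{\tau_{q+1}}\,M_i^kT_i^k\,a_i^k(x)\,\xi_i\otimes\xi_i ,
\]
and by \eqref{eq:time control} this equals \(\tau_{q+1}a_i^k\xi_i\otimes\xi_i\) up to a relative error \(C\lambda_{q+1}^{-1}\).

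\textbf{Remainders.} All remaining pieces go into \(G_i^k\):
\begin{itemize}
\item the two cutoff layers and the incomplete final period, whose total length is \(\le C\tau_{q+1}\lambda_{q+1}^{-1}\) by \eqref{eq:time control};
\item the Taylor error from freezing \(a_i^k\) at \(x\);
\item the relative error \(C\lambda_{q+1}^{-1}a_i^k\) in the main term.
\end{itemize}
Each is bounded in \(L^1\) by \(\tau_{q+1}\lambda_{q+1}^{-1}\|a_i^k\|_{C^1}\). This uses \(\|\widetilde U_i^k\|_{L^1}=2\pi\), the bound \((\eta_i^k)^2\le C\delta_{q+1}\le C\|a_i^k\|_{C^0}\), and \(|\mathcal T_i^k|\le\tau_{q+1}\). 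Dividing by \(\tau_{q+1}\) gives \eqref{eqn:ptauu} and \eqref{eq:Gik}.

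\textbf{Main obstacle.} The delicate point is the bookkeeping on the incomplete pieces. There the prefactor \(\eta_i^k\) and the time-change Jacobian \(r_{q+1}a_i^k/\eta_i^k\) must combine to give exactly an \(a_i^k\)-weighted \(L^1\) mass of size \(\tau_{q+1}\lambda_{q+1}^{-1}\|a_i^k\|_{C^1}\), uniformly in \(q\). This is where the choice of the normalization \((\eta_i^k)^2\) and the choice of \(x_i^k(t_0)\) are essential.
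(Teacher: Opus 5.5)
Your proposal is correct and follows essentially the same route as the paper. Both arguments use the same three steps:
\begin{itemize}
\item the bound \(\bigl|\frac{\dd}{\dd t}(\eta_i^k\zeta_i^kR_i^k)\bigr|\le C\|a_i^k\|_{C^1}\), obtained from \eqref{traj: ineq for cutoff} and \(a_i^k\ge\delta_{q+1}\);
\item the integration by parts in time, which produces \((\eta_i^k\zeta_i^k)^2\widetilde U_i^k\,\xi_i\otimes\xi_i\);
\item the change of variables to arclength, with the errors split into the incomplete time pieces, the freezing of \(a_i^k\) at \(x\), and the defect \(9M_i^kT_i^k-\tau_{q+1}\).
\end{itemize}

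There is one minor slip in your normalization. Before dividing by \(\tau_{q+1}\), the main term is \(9M_i^kT_i^k\,a_i^k\,\xi_i\otimes\xi_i\), not \(\frac{9}{\tau_{q+1}}M_i^kT_i^k\,a_i^k\,\xi_i\otimes\xi_i\). It is the former that equals \(\tau_{q+1}a_i^k\xi_i\otimes\xi_i\) up to relative error \(C\lambda_{q+1}^{-1}\).
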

	\begin{proof}[Proof of Proposition~\ref{prop:auxiliary}]
		
		The definition \eqref{eq:U} of \(U_i^k\) and
			\eqref{eq:est tildeU} imply, for every
			\(t\in\mathcal T_i^k\),
		\begin{align}\label{est-u-proof}
			\|U_i^k(t)\|_{L^p_x} + \lambda_{q+1}^{-1} \|\nabla_x U_i^k(t)\|_{ L^p_x}
			& \le \sup_{s\in \mathcal T^k_i} \Big| \frac{\dd}{\dd s}(\eta_i^k \zeta_i^k(s) R_i^k(s))\Big|
			\Big(\|\tilde{U}_i^k\|_{L^p_x} + \lambda_{q+1}^{-1} \|\nabla \tilde{U}_i^k\|_{ L^p_x} \Big) \nonumber
			\\&
			\le C(p) \lambda_{q+1}^{3- \frac{3}{p}} \sup_{s\in \mathcal T^k_i} \Big| \frac{\dd}{\dd s}(\eta_i^k \zeta_i^k(s) R_i^k(s))\Big|
			.
		\end{align}
		Using \eqref{def: rki}, \eqref{eqn:choiceetamagic}, and
			\eqref{eqn:zeta-est}, we estimate the supremum in
			\eqref{est-u-proof}, for every \(s\in\mathcal T_i^k\), by
		\begin{align}
			\Big| \frac{\dd}{\dd s}(\eta_i^k \zeta_i^k(s) R_i^k(s))\Big| 
			&
			\leq \Big| \eta_i^k R_i^k(s) \frac{\dd\zeta_i^k}{\dd s}\Big| + \Big| (\eta^k_i \zeta^k_i)^2 \frac{\nabla R^k_i}{R^k_i} \Big| 
			\notag\\&\leq 
			C \delta_{q+1}^{\frac{1}{2}} r_{q+1} \|a^k_i\|_{L^\infty_x} \, \frac{\lambda_{q+1}}{\tau_{q+1}}
			+
			C \delta_{q+1} \|\frac{\nabla a_i^k}{a_i^k}\|_{L^\infty_x}
			\notag\\& \leq 
			C \|a^k_i\|_{ C^1_x} \, .
			\label{corrector: an int est}
		\end{align}
		For the second term we used
			\(a_i^k\ge\delta_{q+1}\), while
			\eqref{traj: ineq for cutoff} controls the first term.  Combining this
			estimate with \eqref{est-u-proof} proves \eqref{eqn:estU}.
		
		To prove \eqref{eqn:ptauu}, it suffices to establish
		\begin{align*}
			\frac{1}{\tau_{q+1}}\int_{\mathcal{T}_i^k}U_i^k(x,t)\, \dd t
			= \diver\left( a_i^k(x) \xi_i \otimes \xi_i
			+ G_i^k(x)\right) \, ,
			\quad x\in \mathbb{T}^3 \, ,
		\end{align*}
		and then sum over \(i=1,\dots,9\).  Integration by
			parts gives
		\begin{align}
			\int_{\mathcal{T}_i^k}U_i^k(x,t)\, \dd t & = \int_{\mathcal{T}_i^k}\frac{\dd}{\dd t} \left( \eta_i^k \zeta_i^k(t) R_i^k(t) \right) \tilde{U}_i^k(x - x_i^k(t)) \xi_i\, \dd t  \nonumber \\
			& = - \int_{\mathcal{T}_i^k}  \eta_i^k \zeta_i^k(t) R_i^k(t)  \frac{\dd}{\dd t} \left( \tilde{U}_i^k(x - x_i^k(t)) \right) \, \xi_i\, \dd t \nonumber \\
			& = \int_{\mathcal{T}_i^k}  \left( \eta_i^k \zeta_i^k(t) \right)^2  \diver \left( \tilde{U}_i^k(x - x_i^k(t)) \xi_i \otimes \xi_i \right)\, \dd t \nonumber \\
			& = \diver \left( \xi_i \otimes \xi_i\int_{\mathcal{T}_i^k}  \left( \eta_i^k \zeta_i^k(t) \right)^2   \tilde{U}_i^k(x - x_i^k(t)) \, \dd t \right)\, .
			\label{aux bb: int U}
		\end{align}
		
		Recall \(t_0\), \(T_i^k\), and \(M_i^k\) from
			Section~\ref{sub:traj cent}, and set
			\(\widetilde{\mathcal T}_i^k=[t_0,t_0+M_i^kT_i^k]\).
			By \eqref{iteration: short tauik},
			\(\widetilde{\mathcal T}_i^k\subseteq\bar{\mathcal T}_i^k\), and hence
		$\zeta_i^k(t)=1$ for
		$t\in\widetilde{\mathcal T}_i^k$.
		
		Next, we write
		\begin{align}
			\int_{\mathcal{T}_i^k}&  \left( \eta_i^k \zeta_i^k(t) \right)^2   \tilde{U}_i^k(x - x_i^k(t)) \, \dd t \nonumber \\
			& =\big(\eta_i^k \big)^2\int_{\tilde{\mathcal{T}}_i^k} \tilde{U}_i^k(x - x_i^k(t))  \, \dd t
			+ \big(\eta_i^k \big)^2
			\int_{\mathcal{T}_i^k\setminus \tilde{\mathcal{T}}_i^k}    \zeta_i^k(t) ^2  
			\tilde{U}_i^k(x - x_i^k(t)) \, \dd t \eqqcolon I + II \, .
			\label{aux bb: I + II}
		\end{align}
		The tensor \(II\,\xi_i\otimes\xi_i\) contributes to
			\(G_i^k\).  By \eqref{eq:est tildeU},
		\begin{equation}
			\| II \|_{L^1} 
			\le 
			(\eta_i^k)^2 \| \tilde{U}_i^k \|_{L^1} \int_{\mathcal{T}_i^k\setminus \tilde{\mathcal{T}}_i^k}
			(\zeta_i^k(t))^2\,\dd t
			\le 
			C \delta_{q+1} \tau_{q+1} \lambda_{q+1}^{-1} \, .
			\label{aux bb: L1 est II}
		\end{equation}
		Next, we investigate the main term $I$.
		Make the change of variables
			\(t=t(s)\in\widetilde{\mathcal T}_i^k\), determined by
			\(x_i^k(t(s))=x_i^k(t_0)+s\xi_i\) and \(t(0)=t_0\).  Then
		\begin{equation*}
			t'(s) = \frac{r_{q+1}}{\eta_i^k \zeta_i^k(t(s))} a_i^k(x_i^k(t_0) + s \xi_i)\, .
		\end{equation*}
		This change of variables gives
		\begin{align}
			I & = \eta_i^k r_{q+1} \int_0^{c_i M_i^k \lambda_{q+1}^2} a_i^k (x_i^k(t_0) + s\xi_i)\tilde{U}_i^k(x - (x_i^k(t_0) + s\xi_i)) \,  \dd s \nonumber \\
			& =\eta_i^k r_{q+1} a_i^k(x) \int_0^{c_i M_i^k \lambda_{q+1}^2} \tilde{U}_i^k(x - (x_i^k(t_0) + s\xi_i)) \,  \dd s \nonumber \\
			& + \eta_i^k r_{q+1} \int_0^{c_i M_i^k \lambda_{q+1}^2} (a_i^k(x_i^k(t_0) + s\xi_i) - a_i^k(x)) \tilde{U}_i^k(x - (x_i^k(t_0) + s\xi_i)) \,  \dd s \nonumber \\
			& \eqqcolon I' + II',
			\label{aux bb: I + II prime}
		\end{align}
		where $I'$ is the main term and $II'$ will be part of the error $G^k_i$.
		By \eqref{eqn:tildeu2}, we rewrite the main term as
		\begin{align}
			I'
			&=\eta_i^k r_{q+1}a_i^k(x)
			\int_0^{c_iM_i^k\lambda_{q+1}^2}
			\widetilde U_i^k(x-(x_i^k(t_0)+s\xi_i))\,\dd s\notag\\
			&=\eta_i^k r_{q+1}2\pi c_iM_i^k\lambda_{q+1}^2a_i^k(x)
			=(\tau_{q+1}+E_i^k)a_i^k(x),
			\label{aux bb: def I prime}
		\end{align}
		where $E_i^k:=9T_i^kM_i^k-\tau_{q+1}$.  By
		\eqref{eq:imp} and \eqref{eq:time control},
		\begin{equation}\label{aux bb: est E}
			|E_i^k|\le
				C\frac{\tau_{q+1}}{\lambda_{q+1}}.
		\end{equation}

		The term \(II'\) in
			\eqref{aux bb: I + II prime} satisfies
		\begin{align}
			\| II' \|_{L^1} \le C \eta_i^k r_{q+1} 2\pi c_i M_i^k \lambda^2_{q+1}  \frac{\| a_i^k \|_{C^1}}{\lambda_{q+1}} \leq C \left(\tau_{q+1} + |E_i^k|\right)\frac{\| a_i^k \|_{C^1}}{\lambda_{q+1}} \leq C \tau_{q+1}\frac{\| a_i^k \|_{C^1}}{\lambda_{q+1}}.
			\label{aux bb: L1 est II prime}
		\end{align}
		Combining \eqref{aux bb: int U},
			\eqref{aux bb: I + II}, \eqref{aux bb: I + II prime}, and
			\eqref{aux bb: def I prime}, we obtain
		\begin{align*}
			\frac{1}{\tau_{q+1}}\int_{\mathcal{T}_i^k}U_i^k(x,t)\, \dd t
			&=\diver\left(a_i^k(x)\xi_i\otimes\xi_i+G_i^k(x)\right),\\
			G_i^k(x)
			&:=\frac{1}{\tau_{q+1}}
			\left(II+II^\prime+E_i^k a_i^k(x)\right)\xi_i\otimes\xi_i.
		\end{align*}
		Finally, \eqref{aux bb: L1 est II},
			\eqref{aux bb: est E}, and \eqref{aux bb: L1 est II prime} give
		\begin{align*}
			\|G^k_i\|_{L^1} \leq C \frac{\delta_{q+1}}{\lambda_{q+1}} + C \frac{\| a_i^k \|_{C^1}}{\lambda_{q+1}} + C \frac{\| a_i^k \|_{L^1}}{\lambda_{q+1}} \leq C \frac{\| a_i^k \|_{C^1}}{\lambda_{q+1}}. 
		\end{align*}
	\end{proof}

	\subsection{The temporal corrector}
	
	The auxiliary source cancels the Reynolds stress only after averaging
	over a complete time cell.  We therefore introduce a temporal
	corrector as the primitive of its zero-mean oscillatory part.  The
	cellwise cancellation confines this primitive to an interval of length
	at most $\tau_{q+1}$, producing the small factor needed in the
	estimates, while the Leray projection preserves incompressibility.  Here
	\(\mathbb P=\operatorname{Id}-\nabla\Delta^{-1}\diver\), where
	\(\Delta^{-1}\) is the inverse Laplacian on the periodic domain
	\(\mathbb T^3\), acting on mean-zero scalar functions.
	Define $Q_{q+1}:\mathbb T^3\times I_\ast\to\mathbb R^3$ by
	\begin{align}\label{eq:Q}
		-Q_{q+1}(x,t)\coloneqq
			\mathbb P\int_{k\tau_{q+1}}^t
			\left(U_{q+1}(x,s)-\frac1{\tau_{q+1}}
			\int_{\mathcal T^k}U_{q+1}(x,z)\,\dd z\right)\dd s,
		\qquad t\in\mathcal T^k.
	\end{align}
	
	The integrand in \eqref{eq:Q} has zero integral over every complete
	cell $[k\tau_{q+1},(k+1)\tau_{q+1}]$.  Consequently, only the current
	cell contributes to the primitive, and its effective time length is at
	most $\tau_{q+1}$.
	\begin{equation}\label{eq:Qid2}
		-Q_{q+1}(x,t) = \mathbb P \int_{k \tau_{q+1}}^t \left(U_{q+1}(x,s)  - \frac{1}{\tau_{q+1}}\int_{\mathcal{T}^k} U_{q+1}(x,\tau)\dd\tau\right)\, \dd s.
	\end{equation}

	In particular, \(Q_{q+1}\) and \(\nabla Q_{q+1}\) vanish at every
	cell interface.  Their time derivatives may have bounded jumps there,
	so \(Q_{q+1}\) is continuous and piecewise smooth, in agreement with the
	admissibility convention at the beginning of this section.  Since the
	velocity itself is continuous, these jumps create no temporal Dirac
	masses in the distributional equation.

	The norms of \(Q_{q+1}\) are estimated using
		\eqref{eqn:estU}.  By \eqref{eq:Qid2} and the Calder\'on--Zygmund bounds
		for \(\mathbb P\),
	\begin{align}\label{eqn:Qest}
		\|Q_{q+1}\|_{L_t^\infty L_x^p}
		&\le C\int_{k\tau_{q+1}}^t
		\|U_{q+1}(\cdot,s)\|_{L_x^p}\,\dd s
		\le C\tau_{q+1}\|U_{q+1}\|_{L_t^\infty L_x^p},\nonumber\\
		\|\nabla Q_{q+1}\|_{L_t^\infty L_x^p}
		&\le C\int_{k\tau_{q+1}}^t
		\|\nabla U_{q+1}(\cdot,s)\|_{L_x^p}\,\dd s
		\le C\tau_{q+1}\|\nabla_xU_{q+1}\|_{L_t^\infty L_x^p}.
	\end{align}
	
	These estimates hold for every \(p\in(1,\infty)\) and
		\(t\in\mathcal T^k\).  Equation~\eqref{eq:Q} also gives
	\begin{align}
		\partial_tQ_{q+1}(x,t)
		&=-\mathbb P U_{q+1}(x,t)
		+\frac{1}{\tau_{q+1}}\int_{\mathcal T^k}
		\mathbb P U_{q+1}(x,s)\,\dd s,\nonumber\\
		\|\partial_tQ_{q+1}\|_{L_t^\infty L_x^p}
		&\le2\|\mathbb P U_{q+1}\|_{L_t^\infty L_x^p}
		\le C\|U_{q+1}\|_{L_t^\infty L_x^p}.
		\label{corrector: part Q Lp no space deriv}
	\end{align}
	
	Similarly,
	\begin{align}
		\|\partial_t \nabla Q_{q+1}\|_{L^\infty_t L^p_x}   \leq 2 \|\nabla\mathbb{P}U_{q+1}\|_{L^\infty_t L^p_x}  \leq C \|\nabla U_{q+1}\|_{L^\infty_t L^p_x}.
		\label{corrector: part Q Lp}
	\end{align}
	\subsection{Construction of the next Reynolds flow}
	
	\leavevmode\par\noindent
	
	All components of the perturbation are now available.  We add the
	moving Hill blocks and the temporal corrector to the mollified
	background flow.  Substitution into the equation produces the next
	Navier--Stokes--Reynolds system, with the remaining terms grouped into
	the linear interaction, corrector, temporal-freezing,
	source-replacement, and building-block errors.
	
	Define the velocity field \(u_{q+1}\) by
	\begin{equation}
		u_{q+1}(x,t) \coloneqq u_l(x,t) + v_{q+1}(x,t) + Q_{q+1}(x,t) \, ,
		\quad\quad 
		x\in \mathbb{T}^3\, , \, \, t\in I_\ast \, .
		\label{def: uq+1}
	\end{equation}
	Here the main velocity perturbation is
	\begin{equation*}
		v_{q+1}(x,t)
			=\sum_{k\in\mathcal K_{q+1}}\sum_{i=1}^9V_i^k(x,t).
	\end{equation*}
	
	The fields \(V_i^k\) are the adapted building blocks, and
		\(Q_{q+1}\) is the temporal corrector constructed above.
	
	{
		\begin{remark}
			For every grid index $k\in\{0,\dots,\tau_{q+1}^{-1}\}$,
			$v_{q+1}(x,k\tau_{q+1})=Q_{q+1}(x,k\tau_{q+1})=0$
			by the cutoff and \eqref{eq:Qid2}.  Thus
			\begin{equation*}
				u_{q+1}(x,k\tau_{q+1})
					=u_l(x,k\tau_{q+1}),
				\quad \text{for every grid index $k$ and $x\in \mathbb{T}^3$}\, .
			\end{equation*}
			In particular, for $j\in\{0,1\}$,
			\begin{equation}\label{est:mol-nomol}
				\|u_{q+1}(\cdot, j) - u_q(\cdot, j)\|_{L^2} 
				\le \|u_q-u_l\|_{L_t^\infty L_x^2}
				\le Cl\|(\nabla_xu_q,\partial_tu_q)\|_{L_t^\infty L_x^2}
				\le Cc_0\lambda_{q+1}^{-\gamma}
				=Cc_0\lambda_q^{-1/5}.
			\end{equation}
		\end{remark}
	}

	We define the new pressure field as
	\begin{align*}
		\nabla p_{q+1}\coloneqq{}&\nabla p_l-\nabla\zeta
		+\sum_{k\in\mathcal K_{q+1}}\sum_{i=1}^9\nabla P_i^k\nonumber\\
		&-(\Id-\mathbb P)\left(U_{q+1}(x,t)
		-\frac1{\tau_{q+1}}\int_{\mathcal T^k}
		U_{q+1}(x,\tau)\,\dd\tau\right),
		\qquad t\in\mathcal T^k.
	\end{align*}
	
	Here \(p_l\), \(\zeta\), and \(P_i^k\) are defined in
		\eqref{mollified eqn}, \eqref{eq:error dec}, and
		\eqref{block in interval}, respectively.  The last term is the gradient
		part of the temporal-corrector source. The locally
		finite pressure sum is defined up to a function of time.  The fields
		\(u_{q+1}\) and \(p_{q+1}\) satisfy the
		Navier--Stokes--Reynolds system distributionally with stress
	\begin{equation}\label{eq:Rq+1 dec}
		\E_{q+1} \coloneqq 
		\E_{q+1}^{(l)} 
		+
		\E_{q+1}^{(c)}
		+
		\E_{q+1}^{(t)}
		+
		\E_{q+1}^{(s)}
		+
		\sum_{i=1}^{9}(F_i^k+G_i^k),
			\qquad t\in\mathcal T^k.
	\end{equation}

	Here \(F_i^k\) is defined in \eqref{block in interval} and
		\(G_i^k\) in Proposition~\ref{prop:auxiliary}.  The stress
		\(\E_{q+1}^{(l)}\) contains terms linear in the principal perturbation,
		whereas \(\E_{q+1}^{(c)}\) contains the temporal-corrector terms:
	
	\begin{align*}
		\E_{q+1}^{(l)} 
		& \coloneqq v_{q+1} \otimes u_l
		+ u_l \otimes v_{q+1} \, , \\
		\E_{q+1}^{(c)} & \coloneqq  Q_{q+1} \otimes (u_l + v_{q+1}) + (u_l + v_{q+1}) \otimes Q_{q+1}  
		+ Q_{q+1} \otimes Q_{q+1} -\nu (\nabla Q_{q+1}+(\nabla Q_{q+1})^T)\, .       
	\end{align*}
	Finally, for \(t\in\mathcal T^k\), define
		\(\E_{q+1}^{(t)}\) as the coefficient-freezing error and
		\(\E_{q+1}^{(s)}\) as the source-replacement error:
	\begin{align*}
		\E_{q+1}^{(t)}(x,t)
		& \coloneqq \sum_{i=1}^9(a_i^k(x)-a_i(x,t))\xi_i\otimes\xi_i,
		\qquad t\in\mathcal T^k,
	\end{align*}
	\begin{align*}
		S_i^k(x,t)
		&\coloneqq\left[
		\frac{1}{R_i^k(t)}
		\widetilde H_{R_i^k(t),-\xi_i}(\,x-x_i^k(t))
		-\widetilde U_i^k(\,x-x_i^k(t))\xi_i
		\right]\nonumber\\
		&\quad\times
		\frac{\dd}{\dd t}\bigl(\eta_i^k\zeta_i^k(t)R_i^k(t)\bigr),\\
		\E_{q+1}^{(s)}(x,t)
		&\coloneqq\sum_{i=1}^9\mathcal R(S_i^k(\cdot,t))(x),
		\qquad t\in\mathcal T^k.
	\end{align*}
	The impulse identity for the Hill source and \eqref{eqn:tildeu1} give
	\[
	\int_{\mathbb T^3}S_i^k(x,t)\,\dd x=0,
	\]
	so the standard symmetric inverse-divergence \(\mathcal R\) on the torus
	is applicable.  By \eqref{control of r} and property~\textup{(iii)} of
	\(\widetilde U_i^k\), both terms are supported in a
	common ball of radius \(C\lambda_{q+1}^{-1}\).  Poincar\'e's inequality on
	that ball gives
	\(\|S_i^k\|_{W^{-1,p}}\le
	C_p\lambda_{q+1}^{-1}\|S_i^k\|_{L^p}\); the order-\(-1\) estimate for
	\(\mathcal R\) therefore yields, for \(1<p<\infty\),
	\[
	\diver\mathcal R S_i^k=S_i^k,\qquad
	\|\mathcal R S_i^k\|_{L^p}
	\le C_p\lambda_{q+1}^{-1}\|S_i^k\|_{L^p}.
	\]
	Finally, substituting \eqref{block in interval}, \eqref{eq:Q},
	\eqref{eqn:ptauu}, and \eqref{eq:error dec} into
	\eqref{def: uq+1} gives \eqref{eq:NSR} with the stress
	\eqref{eq:Rq+1 dec}; the signs of
	\(\E_{q+1}^{(t)}\) and of the pressure correction
	\(-\nabla\zeta\) are exactly those dictated by
	\eqref{eq:error dec}.

	\section{Proof of Proposition~\ref{iteration}}
	
	We verify the conclusions of Proposition~\ref{iteration}.  We first control the
	velocity increment and its space--time regularity, then estimate each
	component of the new Reynolds stress, and finally choose the parameters
	so that all the resulting inequalities hold simultaneously.
	
	\subsection{Velocity estimates}
	
	We begin with the bounds for the new velocity.  The $L^2$ estimate
	controls the size of the increment, the
	$C_t^{\alpha_0}L_x^{\bar p}$ estimate propagates the target gradient
	regularity, and the derivative estimates preserve the inductive $L^4$
	bound.  Each estimate yields an explicit restriction on the iteration
	parameters.  By \eqref{est:mol-nomol},
	\eqref{iteration: L2 Vki}, and \eqref{eqn:Qest},

	At each time, at most one pair \((i,k)\) is active.  Hence the norm of
	the sum of moving blocks is controlled by the supremum over \((i,k)\),
	as used below.

	\begin{align*}
		\|u_{q+1}-u_{q}\|_{L_t^\infty L_x^2}
		&\le \|u_l-u_q\|_{L_t^\infty L_x^2}
		+\sup_{i,k}\|V_i^k\|_{L_t^\infty L_x^2}
		+\|Q_{q+1}\|_{L_t^\infty L_x^2}\\
		&\le C\lambda_{q+1}^{-\gamma}+C\delta_{q+1}^{\frac{1}{2}}
		+C\tau_{q+1}\|U_{q+1}\|_{L_t^\infty L_x^2}\\
		&\le C\lambda_{q+1}^{-\gamma}
		+C\lambda_1^\gamma\lambda_{q+1}^{-\frac{\gamma}{2}}
		+C\tau_{q+1}\lambda_{q+1}^{\frac{3}{2}+4\frac{n}{\sigma}+4\gamma}.
	\end{align*}
	To obtain
		\(\|u_{q+1}-u_q\|_{L_t^\infty L_x^2}
		\le C\delta_{q+1}^{1/2}
		=C\lambda_1^\gamma\lambda_{q+1}^{-\gamma/2}\), it is
		sufficient that
	\begin{equation}\label{res:para1}
		\lambda_{q+1}^{-\eta+\frac{3}{2}+4\gamma+4\frac{n}{\sigma}}
		\le \lambda_{q+1}^{-\frac{\gamma}{2}}
		\le\delta_{q+1}^{1/2}.
	\end{equation}
	Moreover,
	\begin{equation*}
		\|u_{q+1}\|_{L_t^\infty L_x^2}
		\le 2\delta_0^{\frac{1}{2}}-\delta_{q}^{\frac{1}{2}}
		+\|u_{q+1}-u_{q}\|_{L_t^\infty L_x^2}
		\le 2\delta_0^{\frac{1}{2}}-\delta_{q}^{\frac{1}{2}}
		+C\delta_{q+1}^{\frac{1}{2}}
		\le 2\delta_0^{\frac{1}{2}}-\delta_{q+1}^{\frac{1}{2}},
	\end{equation*}
	provided that \(\lambda_0\) is sufficiently large.

	\begin{remark}
		For \(2<s<4\), the same argument gives the following
			\(L_t^\infty L_x^s\)-bound:
		\begin{align*}
			\|u_{q+1}-u_{q}\|_{L_t^\infty L_x^s}
			&\le \|u_l-u_q\|_{L_t^\infty L_x^s}
			+\sup_{i,k}\|V_i^k\|_{L_t^\infty L_x^s}
			+\|Q_{q+1}\|_{L_t^\infty L_x^s}\\
			&\le Cl\|(\nabla_xu_q,\partial_tu_q)\|_{L_t^\infty L_x^4}
			+C\delta_{q+1}^{\frac{1}{2}}\|R^{\frac{2}{s}-1}\|_{L_t^\infty}
			+C\tau_{q+1}\|U_{q+1}\|_{L_t^\infty L_x^s}\\
			&\le C\lambda_{q+1}^{-\gamma}
			+C\lambda_1^{(\frac4s-1)\gamma}
			\lambda_{q+1}^{-\frac{\gamma}{2}
			+(1-\frac{2}{s})(\gamma+\mu)}
			+C\lambda_{q+1}^{-\eta+3-\frac{3}{s}
			+4\frac{n}{\sigma}+4\gamma}.
		\end{align*}
		Since \(I_\ast\) has finite length, this also implies, for every
		time exponent \(1\le r\le\infty\),
		\[
			\|u_{q+1}-u_q\|_{L_t^r(I_\ast;L_x^s)}
			\le |I_\ast|^{1/r}
			\|u_{q+1}-u_q\|_{L_t^\infty(I_\ast;L_x^s)}.
		\]
		This formulation avoids incorrectly replacing the \(L_t^r\)-norm of
		the sum over all time cells by the supremum of the individual
		cellwise \(L_t^r\)-norms.
	\end{remark}

	The perturbation also preserves the zero-mean condition.  Indeed,
	Proposition~\ref{main prop} and the choice \(e=-\xi_i\) give
	\[
		\int_{\T^3}V_i^k(x,t)\,\dd x
		=2\pi\eta_i^k\zeta_i^k(t)R_i^k(t)\xi_i.
	\]
	On the other hand, \eqref{eqn:tildeu1} and \eqref{eq:U} show that
	\(\int_{\T^3}U_i^k\,\dd x
	=2\pi\frac{\dd}{\dd t}(\eta_i^k\zeta_i^kR_i^k)\xi_i\).
	The cell average of this spatial mean vanishes because \(\zeta_i^k\)
	vanishes at the endpoints of its support.  Taking the spatial mean in
	\eqref{eq:Q} therefore yields
	\[
		\int_{\T^3}Q_{q+1}(x,t)\,\dd x
		=-2\pi\eta_i^k\zeta_i^k(t)R_i^k(t)\xi_i
	\]
	on the active subinterval, and zero elsewhere.  Thus
	\(v_{q+1}+Q_{q+1}\) has zero spatial mean; since mollification preserves
	the mean, so does \(u_{q+1}\).

	\bigskip
	We next estimate \(\nabla u_{q+1}\).  By
		\eqref{iteration: Lp DVki} and \eqref{eqn:Qest},
	\begin{align*}
		\|\nabla u_{q+1}\|_{L_t^\infty L_x^{6/5}}
		&\le \|\nabla u_l\|_{L_t^\infty L_x^{6/5}}
		+\sup_{i,k}\|\nabla V_i^k\|_{L_t^\infty L_x^{6/5}}
		+\|\nabla Q_{q+1}\|_{L_t^\infty L_x^{6/5}}\\
		&\le \|\nabla u_q\|_{L_t^\infty L_x^{6/5}}
		+C \delta_{q+1}^{\frac{1}{2}}
		+C\tau_{q+1}\lambda_{q+1}^{\frac{3}{2}+4\frac{n}{\sigma}+4\gamma} \, .
	\end{align*}
	The desired gain by \(C\delta_{q+1}^{\epsilon}\) follows
		provided that
	\begin{equation}\label{res:param 2}
		-\eta+\frac{3}{2}+4\frac{n}{\sigma}+4\gamma<0.
	\end{equation}
	\begin{remark}
		For an exponent slightly larger than \(6/5\), the
			power of \(R_i^k\) is negative; hence one must use
			\eqref{iteration: Lp DVki special}, rather than
			\eqref{iteration: Lp DVki}.  Because \(r_{q+1}\) is much smaller
			than \(\delta_{q+1}\), this loss restricts the attainable exponent.
			More precisely,
		\begin{align*}
			\|\nabla u_{q+1}\|_{L_t^\infty L_x^{\bar p}}
			&\le \|\nabla u_l\|_{L_t^\infty L_x^{\bar p}}
			+\sup_{i,k}\|\nabla V_i^k\|_{L_t^\infty L_x^{\bar p}}
			+\|\nabla Q_{q+1}\|_{L_t^\infty L_x^{\bar p}}\\
			&\le \|\nabla u_q\|_{L_t^\infty L_x^{\bar p}}
			+C \delta_{q+1}^{\frac{1}{2}}r_{q+1}^{\frac{2}{\bar p} - \frac{5}{3}}
			\delta_{q+1}^{\frac{2}{\bar p} - \frac{5}{3}}
			+C\tau_{q+1}\lambda_{q+1}^{4-\frac{3}{\bar p}+4\frac{n}{\sigma}+4\gamma} \, .
		\end{align*}

		A gain by \(\delta_{q+1}^{\epsilon}\) follows if
		\begin{equation}\label{res:extra 1}
			-\frac{\gamma}{2} +( \frac{5}{3}-\frac{2}{\bar p})(\mu+\gamma)<0,\quad -\eta +4-\frac{3}{\bar p}+4\frac{n}{\sigma}+4\gamma<0.
		\end{equation}
	\end{remark}

	Taking \(s=4\) instead gives
	\begin{equation*}
		\|\nabla u_{q+1}\|_{L_t^\infty L_x^{4}}\le \|\nabla u_q\|_{L_t^\infty L_x^{4}}
		+C \delta_{q+1}^{\frac{1}{2}}r_{q+1}^{-\frac{7}{6}}
		\delta_{q+1}^{-\frac{7}{6}}
		+C\tau_{q+1}\lambda_{q+1}^{\frac{13}{4}+4\frac{n}{\sigma}+4\gamma} \, .
	\end{equation*}
	To keep the induction assumption $\|\nabla u_{q+1}\|_{L_t^\infty L_x^{4}}\le \lambda_{q+1}^n$, we need
	\begin{equation}\label{res:forgot1}
		-\frac{\gamma}{2} + \frac{7}{6}(\mu+\gamma)<n,\quad
		-\eta +\frac{13}{4}+4\frac{n}{\sigma}+4\gamma<n.
	\end{equation}

	\bigskip

	We now justify the H\"older continuity in time without hiding any exponent
	loss.  Set
	\begin{align*}
		D_{q+1}
		&:=\delta_{q+1}^{-2-\beta}r_{q+1}^{-3-\beta}
		\left(1+r_{q+1}\lambda_{q+1}^{4n/\sigma+4\gamma}\right),\\
		B&:=\max\left\{
		(2+\beta)\gamma+(3+\beta)\mu,\,
		(2+\beta)\gamma+(2+\beta)\mu
		+\frac{4n}{\sigma}+4\gamma
		\right\}.
	\end{align*}
	Thus \(D_{q+1}\le C\lambda_{q+1}^{B}\).  For every Banach-valued
	function \(f\) that is continuous and piecewise \(C^1\),
	\[
		[f]_{C_t^{\alpha_0}}
		\le C\|f\|_{L_t^\infty}^{1-\alpha_0}
		\|\partial_tf\|_{L_t^\infty}^{\alpha_0}.
	\]
	This estimate applies globally here: the supports of the \(V_i^k\) are
	disjoint in time, each \(\nabla V_i^k\) vanishes at the endpoints of its
	support, and \(\nabla Q_{q+1}\) vanishes at the cell interfaces.  Using
	\eqref{iteration: Lp DVki}, \eqref{est:dtdx of V},
	\eqref{eqn:Qest}, and \eqref{corrector: part Q Lp}, we obtain
	\begin{align*}
		[\nabla(v_{q+1}+Q_{q+1})]_{C_t^{\alpha_0}L_x^{6/5}}
		&\le C\delta_{q+1}^{\frac12(1-\alpha_0)}D_{q+1}^{\alpha_0}
		+C\tau_{q+1}^{1-\alpha_0}
		\lambda_{q+1}^{\frac32+4n/\sigma+4\gamma}\\
		&\le C\lambda_1^{\gamma(1-\alpha_0)}
		\lambda_{q+1}^{-\frac\gamma2(1-\alpha_0)+B\alpha_0}
		+C\lambda_{q+1}^{-\eta+\frac32+4n/\sigma+4\gamma
		+\eta\alpha_0}.
	\end{align*}
	Both exponents are negative when \(\alpha_0>0\) is sufficiently small,
	by \eqref{res:param 2}.

	For the target exponent \(\bar p\), set
	$\kappa_{\bar p}:=\frac53-\frac2{\bar p}$ and introduce the two positive
	margins
	\[
		\theta_V:=\frac\gamma2-
		\kappa_{\bar p}(\mu+\gamma),
		\qquad
		\theta_Q:=\eta-4+\frac3{\bar p}
		-\frac{4n}{\sigma}-4\gamma.
	\]
	Their positivity is precisely \eqref{res:extra 1}.  The same interpolation
	then gives
	\begin{align*}
		[\nabla(v_{q+1}+Q_{q+1})]_{C_t^{\alpha_0}L_x^{\bar p}}
		&\le
		C\lambda_1^{(1-2\kappa_{\bar p})\gamma(1-\alpha_0)}
		\lambda_{q+1}^{-\theta_V(1-\alpha_0)+B\alpha_0}
		+C\lambda_{q+1}^{-\theta_Q+\eta\alpha_0}.
	\end{align*}
	The displayed powers of $\lambda_1$ are retained explicitly.  Once the
	base frequency is fixed they are independent of $q$; the first stage is a
	single finite term, and the strict negative powers of $\lambda_{q+1}$
	control the summability of all later stages.
	Choose \(\alpha_0>0\) so small that all four exponents above are
	strictly negative.  Since \(\delta_{q+1}\) is a fixed multiple of
	\(\lambda_{q+1}^{-\gamma}\), there is then a \(\vartheta>0\), independent
	of \(q\), such that the preceding bound and the already established
	\(L_t^\infty L_x^{\bar p}\) bound yield
	\[
		\|\nabla u_{q+1}\|_{C_t^{\alpha_0}L_x^{\bar p}}
		\le \|\nabla u_q\|_{C_t^{\alpha_0}L_x^{\bar p}}
		+C_{\lambda_1}\delta_{q+1}^{\vartheta}.
	\]
	Here \(C_{\lambda_1}<\infty\) may depend on the fixed base frequency
	\(\lambda_1\), but is independent of \(q\).  This dependence is harmless
	because \(\lambda_1\) is fixed throughout the iteration.

	Finally, \eqref{corrector: part Q Lp no space deriv} and
		\eqref{iteration: Lp part Vki} give the time-derivative estimate
	\begin{align*}
		\|\partial_t u_{q+1}\|_{L^\infty_tL^p_x} &\le \|\partial_t u_{l}\|_{L^\infty_tL^p_x}+ \sup_{i,k}\|\partial_t V_i^k\|_{L^\infty_tL^p_x}+\|\partial_t Q_{q+1}\|_{L^\infty_tL^p_x}\\
		&\le \lambda_q^n+ C\delta_{q+1}^{-2} r_{q+1}^{-3} (1 + r_{q+1} \lambda_{q+1}^{4 \frac{n}{\sigma}+4 \gamma})+C\lambda_{q+1}^{4 \frac{n}{\sigma}+4 \gamma+3-\frac{3}{p}}.
	\end{align*}
	
	We apply this general estimate with $p=4$.
	The bound
		\(\|\partial_tu_{q+1}\|_{L_t^\infty L_x^4}\le\lambda_{q+1}^n\)
		therefore requires
	\begin{equation}\label{res:para3}
		4 \frac{n}{\sigma}+4 \gamma+3-\frac{3}{4}<n ,\quad 3\mu+2\gamma<n,\quad 2\mu+2\gamma+4 \frac{n}{\sigma}+4 \gamma<n.
	\end{equation}

	\subsection{Reynolds-stress estimates}
	
	We estimate the components of the Reynolds stress defined in the
	preceding section.  Each component will be bounded by a fixed fraction
	of the target level $\delta_{q+2}$.  We treat successively the linear
	interaction, the temporal corrector, the freezing and
	source-replacement errors, and the intrinsic errors of the moving
	blocks.
	
	By \eqref{iteration: L32 Vki}, Poincar\'e's inequality, and
		H\"older's inequality,
	\begin{align*}
		\|\E^{(l)}_{q+1}\|_{L^\infty_t L^1_x}&\le C\|v_{q+1}\|_{L^\frac{3}{2}}\|u_l\|_{L^3} \le C\|v_{q+1}\|_{L^\frac{3}{2}}\|\nabla u_l\|_{L^4}\\
		&  \leq  C \delta_{q+1}^{\frac{1}{2}} r_{q+1}^{\frac{1}{3}}\left(\lambda_{q+1}^{\frac{n}{\sigma}+ \gamma} \right)\lambda_{q}^n.
	\end{align*}
	When this and the other terms containing $\delta_{q+1}^{1/2}$ are
	compared with $\delta_{q+2}$, we use the exact ratio
	\[
		\frac{\delta_{q+1}^{1/2}}{\delta_{q+2}}
		=\lambda_1^{-\gamma}
		\lambda_{q+1}^{\gamma\sigma-\gamma/2}.
	\]
	Thus the retained factor $\lambda_1^{-\gamma}$ is favorable, and the
	displayed exponent inequalities remain sufficient.
	After choosing \(\lambda_0\) sufficiently large to absorb
		the fixed constant, the bound
		\(\|\E^{(l)}_{q+1}\|_{L_t^\infty L_x^1}
		\le\delta_{q+2}/100\) follows from the exponent inequality
	
	\begin{equation}\label{res:para4}
		\lambda_{q+1}^{2\frac{n}{\sigma}+ \gamma-\frac{\mu}{3}-\frac{\gamma}{2}} <\lambda_{q+1}^{-\gamma\sigma}.
	\end{equation}
	For the corrector stress, we estimate
	\begin{align*}
		\|\E_{q+1}^{(c)}\|_{L_t^\infty L_x^1}
		& \le C\|Q_{q+1}\|_{L_t^\infty L_x^2}
		\bigl(\|u_l\|_{L_t^\infty L_x^2}
		+\|v_{q+1}\|_{L_t^\infty L_x^2}
		+\|Q_{q+1}\|_{L_t^\infty L_x^2}\bigr)
		+C\|\nabla Q_{q+1}\|_{L_t^\infty L_x^{6/5}}\\
		&\le C\tau_{q+1}\lambda_{q+1}^{\frac{3}{2}+4\frac{n}{\sigma}+4\gamma}\delta_{0}^{\frac{1}{2}}
		{\le \frac{1}{100}\delta_{q+2}}.
	\end{align*}
	It is therefore sufficient that
	\begin{equation}\label{res:para5}
		\lambda_{q+1}^{4\frac{n}{\sigma}+ 4\gamma+\frac{3}{2}-\eta} <\lambda_{q+1}^{-\gamma\sigma}.
	\end{equation}
	For the coefficient-freezing error,
	\begin{align*}
		\|\E_{q+1}^{(t)}\|_{L_t^\infty L_x^1}
		& \le C\tau_{q+1}\|a_i\|_{\dot C^1}
		\le C\tau_{q+1}\lambda_{q+1}^{4\frac{n}{\sigma}+4\gamma}
		{\le \frac{1}{100}\delta_{q+2}},
	\end{align*}
	where the last inequality follows from
		\eqref{res:para5}.
	
	Choose an exponent $p_s>1$ for the source-replacement estimate,
	independently of the exponent used in Proposition~\ref{main prop}.
	From Proposition~\ref{main prop}, \eqref{corrector: an int est}, and
	\eqref{eq:est tildeU},
	\begin{align*}
		\|\E_{q+1}^{(s)}\|_{L^\infty_t L^{p_s}_x} &\leq C(p_s) \lambda_{q+1}^{-1}
		\left( \| \frac{1}{R_i^k}\tilde H_{R_i^k}\|_{L^\infty_t L^{p_s}_x} + \|\tilde U_i^k\|_{L^\infty_tL^{p_s}_x}\right) 
		\sup_{i,k}\Big| \frac{\dd}{\dd t}(\eta_i^k \zeta_i^k R_i^k)\Big|
		\\
		&\leq C(p_s) \lambda_{q+1}^{-1}(\delta_{q+1}^{\frac{2}{p_s} - 2} r_{q+1}^{{\frac{2}{p_s} - 2}}+\lambda_{q+1}^{3-\frac{3}{p_s}} )\lambda^{4\frac{n}{\sigma}+4 \gamma}_{q+1}
		{\leq \frac{\delta_{q+2}}{100}}.
	\end{align*}
	Since $L^{p_s}(\T^3)\hookrightarrow L^1(\T^3)$, the preceding bound
	controls the required $L^1$ stress.  It closes provided
	\begin{align}\label{res:para6}
		-1+\frac{4n}{\sigma}+4\gamma
		+(\mu+\gamma)\left(2-\frac2{p_s}\right)
		&< -\gamma\sigma,
		&
		-1+\frac{4n}{\sigma}+4\gamma+3-\frac3{p_s}
		&< -\gamma\sigma.
	\end{align}
	Both inequalities hold by taking $p_s>1$ sufficiently close to one.
	It remains to use \eqref{error: Fki} and
		\eqref{eq:Gik} for the building-block and averaging errors:
	\begin{align}\label{res:para7}
		\|F_i^k\|_{L_t^\infty L_x^1}&\le C \delta_{q+1} \, r_{q+1}^{\kappa} \, (\lambda_{q+1}^{3 \frac{n}{\sigma}+3 \gamma})^{\kappa} \left(\lambda_{q+1}^{4 n/\sigma+5 \gamma} \right)
		+C\nu\delta^{\frac{1}{2}}_{q+1}r_{q+1}^{\frac{2}{p_0}-\frac{5}{3}}(\lambda_{q+1}^{3 \frac{n}{\sigma}+3 \gamma})^{\frac{2}{p_0}-\frac{5}{3}}
		\le \frac{\delta_{q+2}}{100}, \\
		\|G_i^k\|_{L_x^1}&\le 
		C \frac{\| a_i^k \|_{C^1}}{\lambda_{q+1}}
		\le C\lambda_{q+1}^{4 \frac{n}{\sigma}+4 \gamma-1}
		\le \frac{\delta_{q+2}}{100}  \, .\notag
	\end{align}
	At a fixed time there are four global error tensors, at most one active
	tensor $F_i^k$, and nine static tensors $G_i^k$ on the current time cell.
	Consequently, after
	increasing \(\lambda_0\) once more to absorb all fixed constants,
	\[
		\|\E_{q+1}\|_{L_t^\infty L_x^1}
		\le \frac{14}{100}\,\delta_{q+2}
		<\delta_{q+2}.
	\]
	\subsection{Choice of parameters}\label{sec:choose para}
	
	We collect the restrictions obtained above and exhibit one choice for
	which they are all strict.  We distinguish the intrinsic block exponent
	$p_0$, the source-replacement
	exponent $p_s$, and the final regularity exponent $\bar p$, since these
	exponents enter different estimates.  The geometric condition
	\eqref{control of r} gives
	\begin{equation*}
		1-\alpha\mu+3\alpha\left(\frac{n}{\sigma}+\gamma\right)<0.
	\end{equation*}
	For \eqref{traj: ineq for cutoff}, the fixed prefactor in
	$\delta_{q+1}^{1/2}=\lambda_1^\gamma\lambda_{q+1}^{-\gamma/2}$ must also
	be retained.  Indeed,
	\begin{equation*}
		\frac{\lambda_{q+1}^3r_{q+1}\delta_{q+1}^{1/2}}
		{\tau_{q+1}}
		=\lambda_1^\gamma
		\lambda_{q+1}^{3-\mu-\gamma/2+\eta}.
	\end{equation*}
	Thus the stronger first-stage condition
	\begin{equation*}
		3-\mu+\frac\gamma2<-\eta
	\end{equation*}
	is sufficient uniformly for every $q\ge0$ after increasing $\lambda_1$.
	We next collect \eqref{res:para1}, \eqref{res:param 2},\eqref{res:forgot1},
	\eqref{res:para3}--\eqref{res:para7}.  Since $\sigma$ is a positive
	integer, \eqref{res:para5} implies \eqref{res:para1} and
	\eqref{res:param 2}.  The remaining restrictions reduce to
	\begin{align*}
		4 \frac{n}{\sigma}+4 \gamma+3-\frac{3}{4}<n ,\quad 3\mu+2\gamma<n,\quad 2\mu+2\gamma+4 \frac{n}{\sigma}+4 \gamma<n,\\
		2\frac{n}{\sigma}-\frac{\mu}{3}+\frac{\gamma}{2} <-\gamma\sigma,\quad  4\frac{n}{\sigma}+ 4\gamma+\frac{3}{2}-\eta <-\gamma\sigma, \quad  4\frac{n}{\sigma}+ 4\gamma-1 <-\gamma\sigma,\\
		-\mu\kappa+(3\frac{n}{\sigma}+ 3\gamma)\kappa+4\frac{n}{\sigma}+ 4\gamma <-\gamma\sigma,\quad -\frac{\gamma}{2}+(3\frac{n}{\sigma}+ 3\gamma-\mu)(\frac{2}{p_0}-\frac{5}{3})<-\gamma\sigma,\\
		-\frac{\gamma}{2} + \frac{7}{6}(\mu+\gamma)<n,\quad -\eta +\frac{13}{4}+4\frac{n}{\sigma}+4\gamma<n.
	\end{align*}
	These conditions are supplemented by the two source-exponent
	inequalities in \eqref{res:para6}.  We first choose
	$\sigma = 200$, $n=20$, $\gamma= \frac{1}{1000}$,
	$\mu = 6+2\gamma=6.002$, and
	$\eta = 3$.  For the intrinsic block estimates, set
	$p_0=\frac{28}{27}$ and $\alpha = \frac{2}{11}$.  Then
	\[
	\kappa = \min\left\{\frac{2+3\alpha}{2p_0}-4\alpha,
	2-6\alpha+\frac{3\alpha}{p_0},
	\frac{2}{p_0}-\frac{4}{3}\right\}=\frac12,
	\]
	and we choose
	$\beta = \frac{14}{3}+\frac{3\alpha-2}{p_0}-6\alpha
	=\frac{502}{231}$.  Independently, take $p_s=\frac{101}{100}$ in
	\eqref{res:para6}.  The two geometric left-hand sides are
	approximately \(-0.03618\) and \(-3.0015<-\eta\).  Moreover,
	\[
		\lambda_1^\gamma
		\lambda_{q+1}^{3-\mu-\gamma/2+\eta}
		=\lambda_1^\gamma\lambda_{q+1}^{-5\gamma/2}
		\le \lambda_1^{-3\gamma/2},
	\]
	which verifies \eqref{traj: ineq for cutoff}, also for $q=0$, once
	$\lambda_1$ is sufficiently large.  The inequalities that become less
	favorable when $\mu$ is increased have the values
	\[
		3\mu+2\gamma=18.008<20,
		\qquad
		2\mu+2\gamma+4\frac n\sigma+4\gamma=12.410<20,
		\qquad
		-\frac\gamma2+\frac76(\mu+\gamma)=7.003<20.
	\]
	The two source-exponent left-hand sides in \eqref{res:para6} are
	approximately \(-0.47713\) and \(-0.56630\), both below
	$-\gamma\sigma=-0.2$.  The bounds in \eqref{res:para4},
	\eqref{res:para5}, and the \(G_i^k\)-part of \eqref{res:para7} have
	exponents \(-1.80017\), \(-1.096\), and \(-0.596\), respectively.
	The two $\mu$-dependent exponents in the $F_i^k$-part of
	\eqref{res:para7} are \(-2.44550\) and \(-1.49310\).  Hence all of
	these exponents are strictly below \(-\gamma\sigma=-0.2\), and direct
	substitution verifies the remaining strict inequalities as well.
	
	For the final exponent \(\bar p>6/5\), the gradient estimates impose
	the additional restrictions in \eqref{res:extra 1}, namely
	\begin{equation*}
		-\frac{\gamma}{2}
		+\left(\frac{5}{3}-\frac{2}{\bar p}\right)(\mu+\gamma)<0,
		\qquad
		-\eta+4-\frac{3}{\bar p}
		+4\frac{n}{\sigma}+4\gamma<0.
	\end{equation*}
	For
	\(\sigma=200\), \(n=20\), \(\gamma=10^{-3}\),
	\(\mu=6+2\gamma=6.002\), and
	\(\eta=3\), both inequalities are strict when
	\(\bar p=6/5+5\times10^{-5}\): their left-hand sides are approximately
	\(-8.31424\times10^{-5}\) and \(-1.09590\), respectively.  In
	particular, the positive interpolation margins remain available, and the
	corresponding time H\"older exponent may still be chosen sufficiently small.

	\section{Proof of the main theorem}
	We first construct a smooth Reynolds solution on the fixed interval
	\(I_\ast\supset[0,1]\) that approximates the two prescribed endpoint
	fields.  Consider
	\begin{equation}\label{eq:NSR_0}
		\partial_tu_0+\diver(u_0\otimes u_0)-\nu\Delta u_0+\nabla p_0
		=\diver\E_0,
		\qquad \diver u_0=0,
	\end{equation}
	where \(\E_0\) is symmetric.
	
	Fix \(u^{(0)},u^{(1)}\in L^2_\sigma(\T^3)\), and let
	$\rho_\ell$ be a standard spatial mollifier.  Choose
	$\chi\in C^\infty(I_\ast;[0,1])$ such that
	\[
	\chi\equiv1\quad\hbox{on }I_\ast\cap(-\infty,1/4],
	\qquad
	\chi\equiv0\quad\hbox{on }I_\ast\cap[3/4,\infty).
	\]

	Let $\mathcal R$ be a symmetric inverse-divergence operator on the
	torus, normalized so that
	\[
		\diver\mathcal R f=f
		\qquad\text{whenever}\qquad
		\int_{\T^3}f(x)\,\dd x=0.
	\]

	\begin{proposition}[Smooth initialization]\label{prop:init}
		Define
		\begin{align}
			u_0
			&:=\chi(t)(  u^{(0)}*\rho_\ell)(x)
			+(1-\chi(t))( u^{(1)}*\rho_\ell)(x),\label{eq:init-u}\\
			p_0&:=0,\notag\\ \E_0&:=\mathcal R( \partial_tu_0+\diver( u_0\otimes u_0)
			-\nu\Delta u_0).\label{eq:init-stress}
		\end{align}
		Then $(u_0,p_0,\E_0)$ is a smooth solution of
		\eqref{eq:NSR_0} on \(\T^3\times I_\ast\).  Moreover, for every $\varepsilon>0$, there exists
		$\ell>0$ such that
		\begin{equation}\label{eq:init-endpoints}
			\| u_0(\cdot,0)-u^{(0)}\|_{L^2}
			+\|u_0(\cdot,1)-u^{(1)}\|_{L^2}
			<\frac{\varepsilon}{2}.
		\end{equation}
	\end{proposition}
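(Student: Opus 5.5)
The plan is to verify the proposition in two parts. First comes the algebraic identity together with smoothness. Second comes the endpoint approximation, which follows from standard properties of mollifiers.

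\emph{Smoothness and the Reynolds equation.} Since \(u^{(0)},u^{(1)}\in L^2_\sigma(\T^3)\), the mollifications \(u^{(j)}*\rho_\ell\) are smooth, divergence free, and mean zero, because convolution commutes with \(\diver\) and preserves the spatial mean. Hence \(u_0\) is smooth on \(\T^3\times I_\ast\), divergence free, and mean zero, as \(\chi\) is smooth in \(t\).

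Set \(f:=\partial_tu_0+\diver(u_0\otimes u_0)-\nu\Delta u_0\). Each term of \(f\) has zero spatial mean. The first term \(\partial_tu_0=\chi'(t)(u^{(0)}-u^{(1)})*\rho_\ell\) is mean zero, and the other two are divergences. The normalization \(\diver\mathcal R f=f\) for mean-zero \(f\) therefore applies. With \(p_0=0\) it gives exactly \eqref{eq:NSR_0}, and \(\E_0\) is symmetric because \(\mathcal R\) takes values in symmetric matrices. Smoothness of \(\E_0\) follows because \(\mathcal R\) is a Fourier multiplier acting on the smooth field \(f\), applied for each fixed \(t\), with smooth dependence on \(t\).

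\emph{Endpoint approximation.} Since \(0\le1/4\) and \(1\ge3/4\), the choice of \(\chi\) gives
\[
u_0(\cdot,0)=u^{(0)}*\rho_\ell,\qquad u_0(\cdot,1)=u^{(1)}*\rho_\ell.
\]
By the standard \(L^2\) convergence of mollifications, \(\|u^{(j)}*\rho_\ell-u^{(j)}\|_{L^2}\to0\) as \(\ell\to0\). Choosing \(\ell\) small enough that each of these two errors is below \(\varepsilon/4\) gives \eqref{eq:init-endpoints}.

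No step is a genuine obstacle. The only point that needs care is the mean-zero compatibility required to apply \(\mathcal R\), and this is supplied by the divergence structure of the nonlinear and viscous terms together with the zero mean of the data.
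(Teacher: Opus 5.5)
Your proposal is correct and follows essentially the same route as the paper. Both arguments get the mean-zero compatibility needed for \(\mathcal R\) from the divergence-free, mean-zero mollified data, and both get the endpoint bound from \(\chi(0)=1\), \(\chi(1)=0\) together with \(L^2\) convergence of mollifications.
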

	
	\begin{proof}
		Spatial convolution preserves both incompressibility and zero mean.
		Consequently,
		\[
		\int_{\T^3}\partial_tu_0+\diver( u_0\otimes u_0)
		-\nu\Delta u_0\dd x=0
		\]
		for every $t$.  Thus
		$\E_0$ is well defined and symmetric, and
		\eqref{eq:init-u}--\eqref{eq:init-stress} give \eqref{eq:NSR_0}.
		Finally, $u_0(\cdot,0)=u^{(0)}*\rho_\ell$ and
		$u_0(\cdot,1)=u^{(1)}*\rho_\ell$, so
		\eqref{eq:init-endpoints} follows by taking $\ell$ sufficiently small.
	\end{proof}
	After \(\ell\) and \(\chi\) have been fixed, all relevant norms of the
	initial triple are finite.  By increasing \(\lambda_0\), we may arrange
	\begin{equation*}
		\|\mathcal{E}_0\|_{L_t^\infty L_x^1}\le \delta_{1},\quad
		\|u_0\|_{L_t^\infty L_x^2}
		\le \delta_0^{\frac12},\quad
		\|(\nabla_xu_0,\partial_tu_0)\|_{L_t^\infty L_x^4}
		\le \lambda_0^n.
	\end{equation*}
	Thus \((u_0,p_0,\E_0)\) satisfies the induction hypotheses at level
	\(q=0\).
	
	We use the following compactness statement.
	\begin{proposition}[Compactness and convergence]\label{prop:limit}
		Let $(u_q,p_q,\E_q)_{q\ge0}$ be a sequence of
		admissible solutions of
		\eqref{eq:NSR}, with every $u_q$ divergence free and of zero mean.
		Suppose that, for some \(\alpha_0,\vartheta>0\),
		\begin{align}
			\|\E_q\|_{L_t^\infty L_x^1}
			&\le\delta_{q+1},\label{eq:stress-bound}\\
			\|u_{q+1}-u_q\|_{C_tL_x^2}
			&\le C\delta_{q+1}^{1/2},\label{eq:L2-increment}\\
			\|\nabla u_{q+1}\|_{C_t^{\alpha_0}L_x^{\bar p}}
			&\le \|\nabla u_q\|_{C_t^{\alpha_0}L_x^{\bar p}}
			+C\delta_{q+1}^{\vartheta},\label{eq:gradient-recursion}\\
			\|u_{q+1}(\cdot,j)-u_{q}(\cdot,j)\|_{L^2}
			&\le C\lambda_{q}^{-1/5}, \qquad j=0,1, \label{eq:ini bound}
		\end{align}
		where $\lambda_{q+1}=\lambda_q^\sigma$ with $\sigma>1$ and
		$\delta_q=\lambda_1^{2\gamma}\lambda_q^{-\gamma}$ with $\gamma>0$.
		Then there exists
		\[
		u\in C([0,1];L^2_\sigma(\T^3))
		\quad\text{with}\quad
		\nabla u\in {C^{\alpha_0}}([0,1];L^{\bar p}(\T^3))
		\]
		such that $u_q\to u$ strongly in $C_tL_x^2$.  The field $u$ is a weak
		solution of the Navier--Stokes equation in the sense that
		\begin{align}\label{eq:weak-limit}
			&\int_0^1\!\!\int_{\T^3}
			\left[u\cdot\partial_t\varphi
			+(u\otimes u):\nabla\varphi
			+\nu u\cdot\Delta\varphi\right]\dd x\dd t
			+\int_{\T^3}u(x,0)\cdot\varphi(x,0)\dd x=0
		\end{align}
		for every divergence-free
		$\varphi\in C_c^\infty(\T^3\times[0,1);\R^3)$.
		If, in addition, \eqref{eq:init-endpoints} holds and
		\(C\sum_{q=0}^\infty\lambda_q^{-1/5}<\varepsilon/2\), then
		\begin{equation*}
			\|u(\cdot,j)-u^{(j)}\|_{L^2}<\varepsilon,
			\qquad j=0,1.
		\end{equation*}
	\end{proposition}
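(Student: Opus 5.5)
The plan is to argue in four steps: Cauchy convergence in $C_tL^2_x$, passage of the gradient bound to the limit, passage to the limit in the weak formulation, and the endpoint estimate.

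\emph{Step 1: convergence in $C_tL^2_x$.} Since $\delta_{q+1}^{1/2}=\lambda_1^{\gamma}\lambda_{q+1}^{-\gamma/2}$ and $\lambda_{q}=\lambda_0^{\sigma^q}$ grows super-exponentially, the series $\sum_q\delta_{q+1}^{1/2}$ converges. By \eqref{eq:L2-increment}, $(u_q)$ is therefore Cauchy in $C([0,1];L^2)$. Each $u_q$ is continuous in time with values in divergence-free, mean-zero fields, and this closed subspace is preserved under the limit. We obtain $u\in C([0,1];L^2_\sigma)$ with $u_q\to u$ uniformly in time in $L^2$.

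\emph{Step 2: the gradient bound.} Summing \eqref{eq:gradient-recursion} and using $\sum_q\delta_{q+1}^{\vartheta}<\infty$ gives
\[
M:=\sup_q\|\nabla u_q\|_{C^{\alpha_0}_tL^{\bar p}_x}<\infty.
\]
For fixed $t$, the sequence $\nabla u_q(t)$ is bounded in $L^{\bar p}$ with $\bar p>1$, so it has weakly convergent subsequences. Since $u_q(t)\to u(t)$ in $L^2$, the distributional limit of $\nabla u_q(t)$ is $\nabla u(t)$. Every subsequence therefore has the same limit, and $\nabla u_q(t)\rightharpoonup\nabla u(t)$ for the whole sequence. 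Weak lower semicontinuity of the norm, applied to $\nabla u_q(t)-\nabla u_q(s)$, gives
\[
\|\nabla u(t)-\nabla u(s)\|_{L^{\bar p}}\le M|t-s|^{\alpha_0},
\qquad
\|\nabla u(t)\|_{L^{\bar p}}\le M .
\]
Hence $\nabla u\in C^{\alpha_0}([0,1];L^{\bar p})$. This step needs no strong convergence of the gradients, only the uniform modulus.

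\emph{Step 3: the weak formulation.} Test \eqref{eq:NSR} against a divergence-free $\varphi\in C_c^\infty(\T^3\times[0,1))$. Because $u_q$ is continuous in time and piecewise smooth, integrating by parts in time produces no interface contributions, and the pressure drops out since $\diver\varphi=0$. The identity is \eqref{eq:weak-limit} for $u_q$, plus the term $-\int\!\!\int\E_q:\nabla\varphi$ on the right. We pass to the limit term by term:
\begin{enumerate}[label=(\alph*)]
\item The linear terms and the initial-trace term converge by the $C_tL^2$ convergence of Step 1, including at $t=0$.
\item The quadratic term converges because $u_q\otimes u_q\to u\otimes u$ in $L^\infty_tL^1_x$.
\item The stress term is bounded by $\|\E_q\|_{L^\infty_tL^1_x}\|\nabla\varphi\|_{L^\infty}\le\delta_{q+1}\|\nabla\varphi\|_{L^\infty}\to0$, using \eqref{eq:stress-bound}.
\end{enumerate}
Since the triples are built on $I_\ast$, restricting them to $[0,1]$ is harmless.

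\emph{Step 4: the endpoint estimate.} For $j=0,1$, the triangle inequality together with \eqref{eq:ini bound} gives
\[
\|u(\cdot,j)-u_0(\cdot,j)\|_{L^2}\le\sum_{q\ge0}C\lambda_q^{-1/5}<\frac{\varepsilon}{2}.
\]
Combining this with \eqref{eq:init-endpoints} yields $\|u(\cdot,j)-u^{(j)}\|_{L^2}<\varepsilon$.

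I expect Step 2 to be the only delicate point. A uniform bound on $\nabla u_q$ alone would not give time continuity of $\nabla u$, and strong convergence of the gradients in $L^{\bar p}$ is not available. The remedy is to use the uniform Hölder modulus together with weak lower semicontinuity, pointwise in $t$.
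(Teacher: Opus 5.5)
Your proposal is correct and follows the paper's proof essentially step for step. Both arguments use Cauchy convergence in $C_tL^2_x$ from the summable increments, and both transfer the uniform $C^{\alpha_0}_tL^{\bar p}_x$ bound to the limit via pointwise-in-time weak compactness, identification of the limit through distributional convergence, and weak lower semicontinuity. Both then pass to the limit term by term in the tested Reynolds system and use a telescoping sum for the endpoint traces.

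The only slip is cosmetic: testing gives $+\int_0^1\!\int_{\T^3}\E_q:\nabla\varphi$ on the right-hand side rather than its negative. This does not matter, since that term tends to zero.
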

	
	\begin{proof}
		Since $\lambda_q=\lambda_0^{\sigma^q}$, the three series
		\begin{equation}\label{eq:summability}
			\sum_{q=0}^\infty\delta_{q+1}^{1/2},
			\qquad
			\sum_{q=0}^\infty\delta_{q+1}^{\vartheta},
			\qquad
			\sum_{q=0}^\infty\lambda_q^{-1/5}
		\end{equation}
		converge.  It follows from \eqref{eq:L2-increment} that $(u_q)_q$ is
		Cauchy in $C([0,1];L^2)$, so
		\begin{equation}\label{eq:strong-L2}
			u_q\longrightarrow u
			\qquad\text{strongly in }C([0,1];L^2)
		\end{equation}
		for some $u$ in that space.  Divergence and spatial mean pass to the
		strong $L^2$ limit, hence $u(t)\in L^2_\sigma$ for every $t$.
		
		Iterating \eqref{eq:gradient-recursion} and using
		\eqref{eq:summability} gives
		\begin{equation}\label{eq:uniform-gradient}
			\sup_{q\ge0}\|\nabla u_q\|_{C_t^{\alpha_0}L_x^{\bar p}}
			\le
			\|\nabla u_0\|_{C_t^{\alpha_0}L_x^{\bar p}}
			+C\sum_{q=0}^\infty\delta_{q+1}^{\vartheta}
			=:M<\infty.
		\end{equation}
		Fix $t\in[0,1]$.  Since $\bar p>1$, $L^{\bar p}$ is reflexive, and
		$(\nabla u_q(t))_q$ is weakly precompact in $L^{\bar p}$.
		The strong \(L^2\) convergence implies convergence in
		distributions, so every weakly convergent subsequence of
		\(\nabla u_q(t)\) has the unique limit \(\nabla u(t)\).  Hence the full
		sequence converges weakly, and
		\begin{equation*}
			u(t)\in W^{1,\bar p}(\T^3),
			\qquad
			\nabla u_q(t)\rightharpoonup\nabla u(t)
			\quad\text{weakly in }L^{\bar p}.
		\end{equation*}
		
		For $s,t\in[0,1]$, weak lower semicontinuity and
		\eqref{eq:uniform-gradient} yield
		\begin{align*}
			\|\nabla u(t)-\nabla u(s)\|_{L^{\bar p}}
			&\le\liminf_{q\to\infty}
			\|\nabla u_q(t)-\nabla u_q(s)\|_{L^{\bar p}}\\
			&\le M|t-s|^{\alpha_0}.
		\end{align*}
		Thus $\nabla u\in C_t^{\alpha_0}L_x^{\bar p}$.  
		
		It remains to pass to the equation.  Testing the $q$th Reynolds system
		against a divergence-free $\varphi$ gives
		\begin{align}\label{eq:weak-q}
			&\int_0^1\!\!\int_{\T^3}
			\left[u_q\cdot\partial_t\varphi
			+(u_q\otimes u_q):\nabla\varphi
			+\nu u_q\cdot\Delta\varphi\right]\dd x\dd t
			+\int_{\T^3}u_q(x,0)\cdot\varphi(x,0)\dd x\nonumber\\
			&\hspace{35mm}
			=\int_0^1\!\!\int_{\T^3}\E_q:\nabla\varphi\dd x\dd t.
		\end{align}
		The right-hand side tends to zero by \eqref{eq:stress-bound}.  Moreover,
		\eqref{eq:strong-L2} and the uniform $L^2$ bound imply
		\begin{align*}
			\|u_q\otimes u_q-u\otimes u\|_{C_tL_x^1}
			&\le
			\bigl(\|u_q\|_{C_tL_x^2}+\|u\|_{C_tL_x^2}\bigr)
			\|u_q-u\|_{C_tL_x^2}
			\longrightarrow0.
		\end{align*}
		All the remaining terms in \eqref{eq:weak-q} pass to the limit directly
		by \eqref{eq:strong-L2}, including the initial trace.  We obtain
		\eqref{eq:weak-limit}.  A distributional pressure may then be recovered
		from the periodic de Rham theorem.
		
		Finally, \eqref{eq:ini bound} and \eqref{eq:init-endpoints} give
		\begin{equation*}
			\|u(\cdot,j)-u^{(j)}\|_{L^2}\le \|u^{(j)}-u_0(\cdot,j)\|_{L^2}+\|u(\cdot,j)-u_0(\cdot,j)\|_{L^2}
			< \frac{\varepsilon}{2}+C\sum_{q=0}^\infty\lambda_q^{-1/5} < \varepsilon,
			\qquad j=0,1.
		\end{equation*}
	\end{proof}

	We now complete the proof of Theorem~\ref{thm:main}.  Choose \(\ell\)
	by Proposition~\ref{prop:init}, and then choose \(\lambda_0\) sufficiently
	large that the initial triple satisfies the induction hypotheses and
	\(C\sum_q\lambda_q^{-1/5}<\varepsilon/2\).  Repeated application of
	Proposition~\ref{iteration} gives a sequence satisfying the assumptions
	of Proposition~\ref{prop:limit}.  Its limit has the required regularity,
	solves \eqref{eq:intro-NS} in the sense of \eqref{eq:intro-weak}, and
	satisfies the two endpoint estimates.

	\subsection{Time locality and proof of Theorem~\ref{thm:nonuniqueness}}

	The endpoint-density statement alone does not imply exact
	nonuniqueness.  The extra ingredient is that one iteration step only
	uses the Reynolds flow in a short neighborhood of the current time cell.
	We record this feature in the form needed below.  At stage \(q\), write
	\begin{equation}\label{eq:locality-scale}
		l_q:=c_0\lambda_{q+1}^{-n/\sigma-\gamma}
	\end{equation}
	for the space--time mollification scale used in Section~4.

	\begin{lemma}[Time locality of the iteration]\label{lem:time-locality}
		Suppose that two Reynolds triples satisfying the hypotheses of
		Proposition~\ref{iteration}, with the same parameters, agree on
		\(\T^3\times(I_\ast\cap(-\infty,T_q])\).
		After fixing the same normalization for the
		pressures, the two applications of Proposition~\ref{iteration} may be
		coupled so that the resulting triples agree on
		
		\[
			\T^3\times
			\bigl(I_\ast\cap(-\infty,T_q-l_q-\tau_{q+1}]\bigr),
		\]
		
		provided \(T_q>l_q+\tau_{q+1}\).
	\end{lemma}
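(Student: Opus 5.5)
The plan is to trace the construction of Section~4 step by step and verify that every object entering \(u_{q+1}\), \(p_{q+1}\), \(\E_{q+1}\) at a time \(t\le T_q-l_q-\tau_{q+1}\) depends only on the data on \(\T^3\times(I_\ast\cap(-\infty,T_q])\). Where the construction contains a choice, the two applications are coupled by making the same choice in both. Write \(T':=T_q-l_q\).

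\emph{Step 1 (mollification).} The space--time mollifier \(\rho_l\) is supported in the ball of radius \(l=l_q\). Hence \(u_l,p_l,\E_l\) at time \(t\) use only the data on \([t-l,t+l]\). The commutator \(u_l\otimes u_l-(u_q\otimes u_q)\ast\rho_l\) has the same property. So the mollified triples coincide for \(t\le T'\).

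\emph{Step 2 (coefficients and cell quantities).} The functions \(\zeta=32(|\E_l|^2+\delta_{q+1}^2)^{1/2}\) and \(a_i\) from Lemma~\ref{lemma:error dec} are pointwise functions of \(\E_l\). The directions \(\xi_i\) depend only on \(\lambda_{q+1}\). Hence \(a_i\) agree for \(t\le T'\). Consider a cell \(\mathcal T^k\) with \((k+1)\tau_{q+1}\le T'\). On it the following all agree:
\begin{itemize}
\item the frozen averages \(a_i^k\), and the amplitudes \(\eta_i^k\) from \eqref{eqn:choiceetamagic};
\item the radii \(\mathfrak R_i^k\) from \eqref{def: rki};
\item the base point \(x_i^k(t_0)\), provided we couple the choices, since the admissible set depends only on \(a_i^k\);
\item the trajectory \(x_i^k\), by ODE uniqueness;
\item the blocks \(V_i^k\), the pressures \(P_i^k\) and stresses \(F_i^k\) (constructed explicitly from these data, with the Helmholtz correction acting only in space at fixed time), and the auxiliary sources \(U_i^k\) with their tensors \(G_i^k\).
\end{itemize}
Every \(t\le T'-\tau_{q+1}\) lies in a cell with \((k+1)\tau_{q+1}\le t+\tau_{q+1}\le T'\), so it lies in such a fully determined cell.

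\emph{Step 3 (corrector and stresses).} By \eqref{eq:Qid2}, \(Q_{q+1}\) on \(\mathcal T^k\) uses \(U_{q+1}\) only on that same cell, through the primitive from \(k\tau_{q+1}\) and the cell average. It therefore agrees on the cells of Step~2. The remaining pieces are \(\E^{(l)},\E^{(c)},\E^{(t)},\E^{(s)}\) and the pressure formula. Each is pointwise in time, or involves only spatial operators (\(\mathcal R\), \(\mathbb P\)) and the current-cell average. They agree on the same set once the additive time-function of the pressure is normalized identically.

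\emph{Main obstacle.} The main point is the cell average \(a_i^k\) and \(\frac1{\tau_{q+1}}\int_{\mathcal T^k}U\). These make the cell containing \(t\) depend on data up to the cell's right endpoint. This is why the full loss \(\tau_{q+1}\), and not \(\tau_{q+1}/9\), appears. A secondary point is making sure the existential choice of \(x_i^k(t_0)\), and any choice in extending to \(I_\ast\), is coupled; this is a matter of fixing a deterministic selection rule. The hypothesis \(T_q>l_q+\tau_{q+1}\) just makes the interval nonempty past the initial edge of \(I_\ast\).
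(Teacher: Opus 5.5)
Your proposal is correct and follows the paper's proof essentially step for step. The space--time mollification costs \(l_q\). Every cellwise object then agrees on complete cells inside \((-\infty,T_q-l_q]\): the frozen averages, the coupled initial points, the trajectories, the Hill blocks, the auxiliary sources, and the current-cell temporal corrector. Since all remaining operators act only in space, this gives agreement up to \(T_q-l_q-\tau_{q+1}\). Your version is somewhat more explicit, for example in invoking ODE uniqueness for the trajectories and in checking that every \(t\le T_q-l_q-\tau_{q+1}\) lies in such a complete cell, but the argument is the same.
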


	\begin{proof}
		Since the two input triples agree on
		\(I_\ast\cap(-\infty,T_q]\), their
		mollifications, and hence the coefficients \(a_i\), agree on
		\(I_\ast\cap(-\infty,T_q-l_q]\).
		Every complete time cell contained in this interval
		therefore has the same frozen coefficients \(a_i^k\) in the two
		constructions.  On such a cell we choose the same admissible initial
		points for the trajectories.  The amplitudes, radii, trajectories,
		moving Hill blocks, and auxiliary sources then agree on that cell.

		The temporal corrector \eqref{eq:Q} is defined by integration only over
		the current cell.  The Leray projection, the symmetric
		inverse-divergence operator, and all other operators used to define
		\(u_{q+1}\), \(p_{q+1}\), and \(\E_{q+1}\) act only in space.
		Consequently, the two output triples agree on the union of the complete
		cells contained in
		\(I_\ast\cap(-\infty,T_q-l_q]\).  In particular, they
		agree on \([0,T_q-l_q-\tau_{q+1}]\), and the same left-time convention
		is preserved for the next stage.  This proves the claim.
	\end{proof}

	\begin{proof}[Proof of Theorem~\ref{thm:nonuniqueness}]
		Fix an arbitrary \(v\in L^2_\sigma(\T^3)\) and \(\varepsilon>0\).
		Choose \(w\in L^2_\sigma(\T^3)\) with \(\|w\|_{L^2}=1\), set
		\[
			w^{(1)}:=0,
			\qquad
			w^{(2)}:=w,
			\qquad
			\varepsilon_*:=\min\{\varepsilon/2,1/8\},
		\]
		and choose one spatial mollification scale \(\ell>0\), common to both
		constructions, such that
		\[
			\|v*\rho_\ell-v\|_{L^2}
			+
			\sum_{j=1}^2
			\|w^{(j)}*\rho_\ell-w^{(j)}\|_{L^2}
			<\frac{\varepsilon_*}{2}.
		\]
		For \(j=1,2\), apply Proposition~\ref{prop:init} with endpoint
		targets \((v,w^{(j)})\), using this common scale and the same cutoff
		\(\chi\).  Denote the resulting initial Reynolds triples by
		\((u_0^{(j)},p_0^{(j)},\E_0^{(j)})\).  Since
		\(\chi\equiv1\) on
		\(I_\ast\cap(-\infty,1/4]\), the two triples agree on
		\(\T^3\times(I_\ast\cap(-\infty,1/4])\).

		Choose one \(\lambda_0\), common to both constructions, sufficiently
		large that both initial triples satisfy the induction hypotheses,
		\begin{equation*}
			C\sum_{q=0}^{\infty}\lambda_q^{-1/5}
			<\frac{\varepsilon_*}{2},
			\qquad
			\sum_{q=0}^{\infty}(l_q+\tau_{q+1})<\frac18.
		\end{equation*}
		The second requirement is possible because \(l_q\) is given by
		\eqref{eq:locality-scale}, \(\tau_{q+1}=\lambda_{q+1}^{-\eta}\),
		and \(\lambda_q=\lambda_0^{\sigma^q}\).
		Apply Proposition~\ref{iteration} to the two triples with identical
		choices on every time cell on which they agree.  Iterating
		Lemma~\ref{lem:time-locality}, the level-\(q\) triples agree on

		\[
			\T^3\times\bigl(I_\ast\cap(-\infty,T_q]\bigr),
			\qquad
			T_q:=\frac14-
			\sum_{m=0}^{q-1}(l_m+\tau_{m+1})\ge\frac18.
		\]

		Proposition~\ref{prop:limit} now gives two limiting weak solutions
		\(u^{(1)},u^{(2)}\in\mathcal S_{\bar p}\).  Their equality on the
		intervals above and their convergence in \(C_tL_x^2\) imply
		\[
			u^{(1)}(\cdot,t)=u^{(2)}(\cdot,t)
			\quad\text{for }0\le t\le\frac18.
		\]
		In particular, they have a common initial trace, denoted by
		\(\widetilde v\), and the endpoint estimate in
		Proposition~\ref{prop:limit} gives
		\begin{equation}\label{eq:nonuniqueness-initial-density}
			\|\widetilde v-v\|_{L^2}<\varepsilon_*\le\varepsilon.
		\end{equation}
		At the other endpoint,
		\[
			\|u^{(j)}(\cdot,1)-w^{(j)}\|_{L^2}<\varepsilon_*,
			\qquad j=1,2.
		\]
		Therefore,
		\begin{align*}
			\|u^{(1)}(\cdot,1)-u^{(2)}(\cdot,1)\|_{L^2}
			&\ge \|w^{(1)}-w^{(2)}\|_{L^2}-2\varepsilon_*\\
			&\ge 1-\frac14>0,
		\end{align*}
		so the two solutions are distinct.

		Let \(\mathcal D\) be the set of all initial data in
		\(L^2_\sigma(\T^3)\) admitting two such distinct solutions.
		Since \(v\) and \(\varepsilon\) were arbitrary,
		\eqref{eq:nonuniqueness-initial-density} shows that \(\mathcal D\) is
		dense in \(L^2_\sigma(\T^3)\).  This proves the theorem.
	\end{proof}

	\section{Limitations and possible modifications}
	
	We record the limits of the present single-core construction under the
	condition \(\nabla u\in C_tL_x^{\bar p}\).  These calculations are not
	impossibility results for other Hill-vortex schemes.

	\subsection{Can the present method improve the exponent?}
	
	The principal obstruction is already visible in
	\eqref{res:extra 1}.  If
	\[
	\vartheta(p):=\frac53-\frac2p,
	\]
	then the formal principal-gradient estimate would require
	\begin{equation*}
		\vartheta(\bar p)
		<
		\frac{\gamma}{2(\mu+\gamma)}.
	\end{equation*}
	For the parameters used in the construction above,
	\(\gamma=10^{-3}\) and \(\mu=6+2\gamma=6.002\), this gives
	\begin{equation}\label{eq:discussion-pmax}
		\bar p
		<
		p_{\max}
		:=
		\frac{2}{
			\frac53-\frac{\gamma}{2(\mu+\gamma)}
		}
		=\frac{24012}{20009}
		\approx 1.20005997.
	\end{equation}
	Thus \(\bar p=1.20005=6/5+5\times10^{-5}\) is already close to the
	ceiling imposed by the principal-gradient estimate for these parameters.
	
	This value is not claimed to be optimal.  However, \(\gamma\) and \(\mu\)
	have competing roles: increasing \(\gamma\) improves amplitude decay,
	whereas decreasing \(\mu\) enlarges the cores.  The orbit-spacing condition
	\[
	1-\alpha\mu
	+3\alpha\left(\frac n\sigma+\gamma\right)<0
	\]
	keeps \(\mu\) relatively large, while the stress estimates restrict
	\(\gamma\).  Reoptimization may improve the numerical margin but does not
	provide a route toward \(p=3/2\); a larger gain requires a structural
	change in the construction.
	
	\subsection{Why temporal concentration does not improve the present theorem}
	
	Temporal concentration, as used in related convex-integration
	constructions such as \cite{CheskidovLuo2022,CheskidovLuo2023}, gives
	estimates in time-integrated spaces.  Concentrating a stress of size
	\(\delta\) on a set of relative measure \(\rho\) changes the perturbation
	amplitude to \(\delta^{1/2}\rho^{-1/2}\).  Since here \(R=ra\) with
	\(a\sim\delta/\rho\), one obtains
	\begin{equation*}
		\|\nabla V_i^k\|_{L_t^sL_x^p}
		\lesssim
		\rho^{\frac1s-\frac12+\vartheta(p)}
		\delta^{\frac12-\vartheta(p)}
		r^{-\vartheta(p)}.
	\end{equation*}
	For \(p\) slightly above \(6/5\), the factor at \(s=\infty\) is
	\(\rho^{-1/2+\vartheta(p)}\), which diverges as \(\rho\downarrow0\).
	Temporal cutoffs also worsen the transport and corrector errors.  Hence
	this device does not improve the present uniform-in-time estimate, although
	it may be useful for a theorem with finite time integrability.
	
	\subsection{Anisotropic localization}
	
	This direction is motivated by intermittent Beltrami, Mikado, and pipe
	constructions; see \cite{DeLellisSzekelyhidi2013,
	BuckmasterVicol2019,BuckmasterVicolHydrodynamics,DaneriSzekelyhidi2017,
	MS18,BuckmasterMasmoudiNovackVicol2023,GiriKwonNovack2024}.  These schemes
	use anisotropic spatial concentration.  The solutions in
	\cite{BuckmasterVicol2019} belong to
	\(C_t(H_x^\beta\cap W_x^{1,1+\beta})\) for some \(\beta>0\), and hence to
	\(C_tL_x^{2+a}\) for some unspecified \(a>0\).  The checkerboard
	localization of Novack and Vicol~\cite{NovackVicol2023} gives a related
	Euler example.  This motivated the target
	\[
	C_t\bigl(W_x^{1,p}\cap L_x^{2+a}\bigr),
	\qquad p>\frac65,
	\]
	with a larger velocity-integrability gain.  One may test the cutoff
	\[
	\chi_R^{a,b}(x)
	:=
	\chi\left(
	\frac{x_1}{R^a},
	\frac{(x_2,x_3)}{R^b}
	\right),
	\qquad 0<a,b<\frac23,
	\]
	and set
	\[
	H_R^{A}:=H_R-R\nabla(\chi_R^{a,b}\Phi).
	\]
	
	If \(H_R^A=H_R\) on \(B_{cR^{2/3}}\), the unchanged core gives
	\[
	\|H_R^{A}\|_{L^p(\T^3)}
	\ge \|H_R\|_{L^p(B_{cR^{2/3}})}
	\gtrsim R^{\frac2p-1},
	\qquad
	\|\nabla H_R^{A}\|_{L^p(\T^3)}
	\ge \|\nabla H_R\|_{L^p(B_{cR^{2/3}})}
	\gtrsim R^{\frac2p-\frac53}.
	\]
	Thus the cutoff does not change the intrinsic Hill-core loss.  It could
	help only through better packing or a smaller divergence-correction error,
	neither of which follows from the present estimates.
	
	\subsection{Several parallel moving cores}
	
	Another possibility is to split one directional stress among
	\(N=N_{q+1}\) disjoint cores moving in parallel.  The cancellation
	normalization would then give an amplitude of order
	\[
	\eta_N\sim\delta_{q+1}^{1/2}N^{-1/2}
	\]
	for each core.  If the cores have comparable radius \(R\) and disjoint
	supports, their combined fixed-time gradient satisfies schematically
	\begin{equation*}
		\left\|
		\sum_{j=1}^{N}\nabla V_j
		\right\|_{L^p}
		\lesssim
		\delta_{q+1}^{1/2}
		N^{\frac1p-\frac12}
		R^{\frac2p-\frac53}.
	\end{equation*}
	For \(p<2\), the factor \(N^{1/p-1/2}\) grows with \(N\).  Thus splitting
	the amplitude does not improve the fixed-time gradient estimate unless
	the new packing permits sufficiently larger cores.  A multicore
	improvement therefore requires a different packing or averaging mechanism,
	not only more copies of the same block.
	
	\section{Further directions and open problems}

	We conclude with several questions that lie beyond the present
	single-core iteration.
	
	\paragraph{Optimal uniform-in-time spatial regularity.}
	Determine the largest exponent \(p\) for which the endpoint-trace
	density in \eqref{eq:intro-trace-density} can hold with
	\(\nabla u\in C_tL_x^p\).  The ceiling in
	\eqref{eq:discussion-pmax} concerns only the chosen parameters.  Can
	optimization, localization, or a different averaging mechanism give a
	fixed gain above \(6/5\)?  The scale-invariant value is \(p=3/2\), for which
	\(W^{1,3/2}(\T^3)\hookrightarrow L^3(\T^3)\), but the present estimates
	do not approach it.

	\paragraph{Energy profiles and admissibility.}
	Prescribed energy profiles and admissibility criteria have been studied in
	Euler convex integration; see
	\cite{DeLellisSzekelyhidi2010,DaneriRunaSzekelyhidi2021,Isett2022,
	BuckmasterDeLellisSzekelyhidiVicol2019}.
	Can the construction prescribe
	\(E(t)=\frac12\|u(\cdot,t)\|_{L^2}^2\) while retaining
	\(\nabla u\in C_tL_x^{\bar p}\), or give flexibility among
	energy-compatible endpoints in the Leray--Hopf or suitable class?
	Arbitrary endpoint density is incompatible with nonincreasing energy, and
	the present estimates give neither \(L_t^2H_x^1\) control nor a local
	energy inequality.
	
	\paragraph{Coherent vortices beyond the Hill profile.}
	The Hill energy--gradient scaling gives the threshold \(6/5\).  Possible
	alternatives include Fraenkel's rings of small
	cross-section~\cite{Fraenkel1972}, Norbury's one-parameter
	family~\cite{Norbury1973} interpolating between thin rings and Hill's
	vortex, and the variational constructions of Fraenkel and
	Berger~\cite{FraenkelBerger1974} and Friedman and
	Turkington~\cite{FriedmanTurkington1981}.  Their aspect ratio provides an
	additional parameter, although the induced velocity may offset the apparent
	\(L^p\)-gain.  Hill's vortex is also the unique extreme member of the
	family~\cite{AmickFraenkel1986}.  Any replacement must retain exact
	traveling cancellation, a controllable impulse~\cite[Ch.~3]{Saffman1992},
	localizability, and small viscous and modulation stresses.  Whether a
	vortex-ring profile can meet these requirements and improve the exponent is
	open.

	\paragraph{Numerically assisted search for new building blocks.}
	Numerical and computer-assisted work on Navier--Stokes
	nonuniqueness~\cite{HouWangYang2026,IonescuJiaPalasek2026} studies
	candidate self-similar profiles and unstable modes; related tools appear for
	Euler~\cite{ChenHou2022,
	WangLaiGomezSerranoBuckmaster2023} and in rigorous a posteriori error
	estimates~\cite{BressanShen2021}.  For the present method, one would instead
	seek a traveling or pulsating profile with better energy--gradient scaling
	and a certified residual small in the required stress norms.  Numerical
	optimization may help identify candidates, but the cited instability
	mechanisms do not supply the cancellation identities needed here.
	
	\paragraph{Trajectory geometry and nonperiodic domains.}
	Extending the construction to \(\R^3\) requires a substitute for periodic
	recurrence and control of spatial decay; bounded domains also require
	boundary-compatible blocks.  For three-dimensional Euler, Enciso,
	Pe\~nafiel-Tom\'as, and
	Peralta-Salas~\cite{EncisoPenafielPeralta2025} proved that, under explicit
	compatibility conditions, a smooth local solution on a bounded region can
	be extended to an admissible \(C^\beta\) weak solution on \(\R^3\),
	\(\beta<1/3\), with controlled support.  Their result does not provide
	boundary-compatible Hill vortices or treat viscosity.  The inner--outer
	gluing in \cite{AlbrittonBrueColombo2023} is another precedent.  On the
	torus, partial transport by the coarse velocity may reduce the linear
	interaction, but would require averaging along deformed trajectories.
	
	These questions ask whether the gain above \(6/5\) extends beyond the
	present moving-Hill-vortex construction.

	\subsection*{Acknowledgment}

	Quoc-Hung Nguyen's research was supported by the CAS Project for Young
	Scientists in Basic Research (Grant No.~YSBR-031) and by the National
	Natural Science Foundation of China under Grant Nos.~1251101538 and
	12595282.  He is grateful to Elia Bru\`e for suggesting that he consider
	this problem.

	\subsection*{AI use disclosure}

	This paper was written by the authors and was not generated by artificial
	intelligence.  OpenAI's ChatGPT~5.6 was used to assist with language
	editing, organization, bibliographic checks, \LaTeX{} troubleshooting, and
	verification of mathematical calculations.  It was not used as an
	independent source of mathematical results.  All mathematical ideas,
	arguments, proofs, and conclusions are those of the authors, who reviewed
	the AI-assisted revisions and take full responsibility for the contents of
	the paper.

\end{document}